\documentclass[10pt,reqno,final]{amsart} 
\usepackage{epsfig,amssymb,amsmath,mathtools}
\usepackage{amssymb,version,graphicx,fancybox,mathrsfs}
\usepackage{url,hyperref}
\hypersetup{colorlinks=true,citecolor=blue,linkcolor=red,anchorcolor=blue}
\usepackage[notcite,notref]{showkeys}
\usepackage{subfigure}
\usepackage{color}
\usepackage{stmaryrd}
\usepackage{multirow}
\usepackage{booktabs,siunitx,tablefootnote}
\usepackage{multicol,makecell}
\setlength{\columnsep}{0.1cm}

\textwidth=15.25cm
\textheight=22.2cm
\setlength{\oddsidemargin}{0.6cm}
\setlength{\evensidemargin}{0.6cm}

\catcode`\@=11 \theoremstyle{plain}
\@addtoreset{equation}{section}   

\@addtoreset{figure}{section}
\renewcommand\thefigure{\thesection.\@arabic\c@figure}
\@addtoreset{table}{section}
\renewcommand\thetable{\thesection.\@arabic\c@table}
\newtheorem{thm}{\bf Theorem}

\newenvironment{theorem}{\begin{thm}} {\end{thm}}
\newtheorem{cor}{\bf Corollary}
\newtheorem{prop}{Proposition}[section]

\newenvironment{corollary}{\begin{cor}} {\end{cor}}
\newtheorem{lmm}{\bf Lemma}

\newenvironment{lemma}{\begin{lmm}}{\end{lmm}}
\theoremstyle{remark}
\newtheorem{rem}{\bf Remark}[section]
\theoremstyle{example}


\definecolor{ligreen}{rgb}{0.0, 0.3, 0.0}

\definecolor{darkblue}{rgb}{0.0, 0.0, 0.55}

\definecolor{anti-flashwhite}{rgb}{0.55, 0.57, 0.68}

\def \rd {{\rm d}}
\def \ri {{\rm i}}
\def \re {{\rm e}}
\newcommand{\bs}[1]{\boldsymbol{#1}}
\allowdisplaybreaks



\makeatletter
\newsavebox\myboxA
\newsavebox\myboxB
\newlength\mylenA

\newcommand*\xoverline[2][0.75]{%
    \sbox{\myboxA}{$\m@th#2$}%
    \setbox\myboxB\null
    \ht\myboxB=\ht\myboxA%
    \dp\myboxB=\dp\myboxA%
    \wd\myboxB=#1\wd\myboxA
    \sbox\myboxB{$\m@th\overline{\copy\myboxB}$}
    \setlength\mylenA{\the\wd\myboxA}
    \addtolength\mylenA{-\the\wd\myboxB}%
    \ifdim\wd\myboxB<\wd\myboxA%
       \rlap{\hskip 0.5\mylenA\usebox\myboxB}{\usebox\myboxA}%
    \else
        \hskip -0.5\mylenA\rlap{\usebox\myboxA}{\hskip 0.5\mylenA\usebox\myboxB}%
    \fi}
\makeatother

\begin{document}
\bibliographystyle{plain}

\title[Eigenvalue Analysis and Applications of LDPG methods]
{Eigenvalue Analysis and Applications of the Legendre Dual-Petrov-Galerkin Methods  for  Initial Value Problems}
\author[
	D.S. Kong, J. Shen, L.-L. Wang  $\&$\,  S.H. Xiang
	]{
		\;\; Desong Kong${}^{\dag}$,\;  Jie Shen${}^{\ddag}$,\; Li-Lian Wang${}^{\S}$ \; and\; Shuhuang Xiang${}^{\dag}$
		}
	\thanks{${}^{\dag}$School of Mathematics and Statistics,
		Central South University, Changsha 410083, China (desongkong@csu.edu.cn, xiangsh@csu.edu.cn).
    The work of S. Xiang and D. Kong is supported by the National Natural  Science Foundation of China (No. 12271528, No. 12001280). The first author is partially supported by the Fundamental Research Funds for the Central Universities of Central South University (No. 2020zzts031). \\
		\indent ${}^{\ddag}$Department of Mathematics, Purdue University, West Lafayette, IN 47907, USA (shen7@purdue.edu). The
		work of J.S. is supported in part by NSF DMS-1720442 and NFSC 11971407. \\
			\indent ${}^{\S}$Corresponding author. Division of Mathematical Sciences, School of Physical
		and Mathematical Sciences, Nanyang Technological University,
		637371, Singapore (lilian@ntu.edu.sg). The research of this author is partially supported by Singapore MOE AcRF Tier 1 Grant: MOE2021-T1-RG15/21.\\
		\indent The first author would like to acknowledge the support of the China Scholarship Council (CSC, No.\! 202106370101) for visiting NTU and working on this topic.
		}

\keywords{Legendre dual Petrov-Galerkin methods,    Bessel and generalised Bessel polynomials,
spectral method in time,  eigenvalue distributions, matrix diagonalisation, QZ decomposition}
\subjclass[2010]{65M70, 65L15, 65F15, 33C45}

\begin{abstract}
  In this paper, we show that the eigenvalues and eigenvectors of the spectral discretisation matrices resulted from  the
  Legendre dual-Petrov-Galerkin (LDPG) method for the $m$th-order initial value problem (IVP):
   $u^{(m)}(t)=\sigma u(t),\, t\in (-1,1)$ with constant $\sigma\not=0$ and usual initial conditions at $t=-1,$ are associated with the generalised Bessel polynomials (GBPs). The essential idea of the analysis is to properly construct the basis functions for the solution and its dual spaces so that
   the matrix of the $m$th derivative is an identity matrix, and  the mass matrix is then identical or approximately equals to the Jacobi matrix of the three-term recurrence of GBPs with specific integer parameters. This allows us to characterise the eigenvalue distributions and identify the eigenvectors. As a by-product,
   we are able to answer some open questions related to the very limited known results on the collocation method at Legendre points (studied in 1980s)
    for the first-order IVP, by reformulating it into a Petrov-Galerkin formulation. Moreover, we present two stable algorithms for computing zeros of the GBPs,  and develop a general space-time spectral method for  evolutionary  PDEs using either the matrix diagonalisation, which is restricted to a small number of unknowns in time due to the ill-conditioning but is fully parallel, or the QZ decomposition which is numerically stable for a large number of unknowns in time but involves sequential computations. We provide ample numerical results to demonstrate the high accuracy and robustness  of the space-time spectral methods for some interesting examples of linear and nonlinear wave problems.
\end{abstract}
\maketitle

\vspace*{-20pt}

\section{Introduction}\label{sec:int}
Spectral methods are a class of numerical methods which use global orthogonal polynomials/functions as basis functions, and have gained much popularity  due to its high accuracy for problems with   solutions that are smooth in suitable functional spaces \cite{Canuto1987Book,Canuto2006Book,Shen2011Book}.
However,  spectral methods are mostly used in  spatial discretisations, while  time discretisations are usually done with traditional approaches such as implicit-explicit schemes or  explicit Runge-Kutta methods which are of fixed-order convergence rates, thus creating a mismatch between  spectral accuracy in space and usually lower-order accuracy in time.
In practice, space-time spectral methods are attractive for problems with dynamics that require high resolution in both space and time, e.g., oscillatory wave propagations.

Several attempts have been made in developing  spectral methods in time. Tal-Ezer \cite{TalEzer1986SINUM,TalEzer1989SINUM} first studied Chebyshev spectral methods  in time for linear hyperbolic and parabolic problems.
Tang and Ma \cite{Tang2002Adv}
presented a Legendre-tau spectral method in time for parabolic  PDEs with periodic boundary conditions in space;
and later in \cite{Tang2007}  for  hyperbolic problems in a similar setting.
Shen and Wang \cite{Shen2007ANM} proposed a Fourierization space-time Legendre spectral method for parabolic equations.
Recently, Lui \cite{Lui2016} investigated  a space-time Legendre spectral collocation method for the heat equation.
Shen and Sheng \cite{Shen2019} developed a space-time dual-Petrov-Galerkin method for fractional (in time) subdiffusion equations and adopted a QZ decomposition technique to overcome the extreme ill-conditioning  when the eigen-decomposition is used.
However, a theoretical foundation for such methods is still lacking.
It is known that the eigenvalue distribution of the spectral differentiation matrices plays a central role in the stability and round-off error of the underlying spectral method.  While the eigenvalue distribution  of spectral approximation to second-order BVPs were well established \cite{Gottlieb1983SINUM,Weideman1988SINUM,Boulmezaoud2007JSC,Zhang2014},
very little is known  about  the eigenvalue distribution  of spectral approximation to IVPs;  particularly no results appear  available for higher-order IVPs.

Unlike the second-order operators for BVPs,
spectral approximations  to the  derivative operators of IVPs can be problematic.  In particular, the derivative matrices are usually non-normal, which typically require more stringent conditions for the stability.  We refer to  Gottlieb and Orszag  \cite{Gottlieb1977Book} for  insightful stability analysis of spectral approximations to  first-order hyperbolic problems, and
briefly  review  the existing  findings
(mostly  on asymptotic  eigenvalue analysis of first-order spectral discretisation matrices  in 1980s).
Dubiner \cite{Dubiner1987}  conducted an  asymptotic  analysis of  the spectral-tau method for a prototype problem:
{\em find $\lambda\in {\mathbb C}$ and $u_N\in {\mathbb P}_N={\rm span}\{ x^k: 0\le k\le N\}$ such that
\begin{equation}\label{diseigen}
u_N'(x)-\lambda u_N(x)= p_N(x),\;\;\; x\in (-1,1);\quad u_N(1)=0,
\end{equation}
where $p_N(x)$ is a polynomial of degree $N.$} In particular,  for the Jacobi-tau approximation:
$$
\int_{-1}^1 \big(u_N'-\lambda u_N\big)(x) v(x) (1-x)^\alpha (1+x)^{\beta} {\rm d}x =0,\quad \forall\,  v\in {\mathbb P}_{N-1},
$$
we have $p_N(x)=\tau P_N^{(\alpha,\beta)}(x)$
(with any constant $\tau\not=0$), in view of the orthogonality of Jacobi polynomials and $u_N'-\lambda u_N\in {\mathbb P}_N.$
Dubiner \cite{Dubiner1987} showed that  for $\beta>-1,$
\begin{equation*}\label{Dubinas}
\lambda_i^{N}=
 O(N^2),\;\;  {\rm if} \;\;\alpha\in (-1, 1];\quad
 \lambda_i^{N}=O(N), \;\; {\rm if} \;\;\alpha=0.
\end{equation*}
According to \cite{Csordas2005PAMS}, all eigenvalues lie in the left half-plane.   Following \cite{Dubiner1987},   Wang and Waleffe \cite{WangWaleffe2014a} further proved  the existence of unstable eigenvalues for $\alpha>1$ and the eigenvalues of the Legendre case are zeros of the
modified Bessel  function $k_N(z)$, drawn from the property   $k_N(z)= z^{-1}{\re^{-z}} \sum_{k=0}^N {P_N^{(k)}(1)}{z^{-k}}$
and its relation with the characteristic polynomial of \eqref{diseigen}.

Tal-Ezer \cite{MR867974}, and Trefethen and Trummer \cite{Trefethen1987SINUM} discovered  that eigenvalues of the spectral differentiation matrix of
the collocation methods for the first-order IVP at the  \emph{Legendre  points} (see \eqref{Trencol}) behave like  $\lambda_i^{N}=O(N),$ but
$\lambda_i^{N}=O(N^2)$ at Chebyshev and other  Jacobi points.
Accordingly, the allowable time-step size in the explicit time discretisation of the hyperbolic problem   is    $O(N^{-1}),$ rather than $O(N^{-2})$  for the Chebyshev and other Jacobi collocation methods.
However,
numerical evidences in  \cite{Trefethen1987SINUM}  indicated that the  Legendre collocation approximation
with $O(N^{-1})$ time-step constraint appeared only in theory, but  subject to an $O(N^{-2})$ restriction in practice.  Moreover,  the computation
of the eigenvalues is precision-dependent, and is extremely sensitive to perturbations/round-off errors, e.g.,   the use of  EISPACK  could produce reliable eigenvalues for $N\le 28$ with double-precision calculations \cite{Trefethen1987SINUM}.  Here,  we  demonstrate  in Figure \ref{fig:Deig-Leg} the  eigenvalues computed by \texttt{eig($\bs D$)}  in {\tt Matlab},  where the reference eigenvalues are computed by the algorithm in Pasquini \cite{Pasquini2000}  for computing the zeros of the GBP (see Theorem \ref{thm:eigMp} and Section \ref{sect6:zero}).   The numerical study in \cite{Trefethen1987SINUM} gave rise to  open questions, e.g.,  on the identifications of  eigenvalues and eigenvectors, and  rigorous proof of the exponential growth of the condition number of  the eigenvector matrix.

\begin{figure}[t]
  \centering
  \includegraphics[width=.33\textwidth]{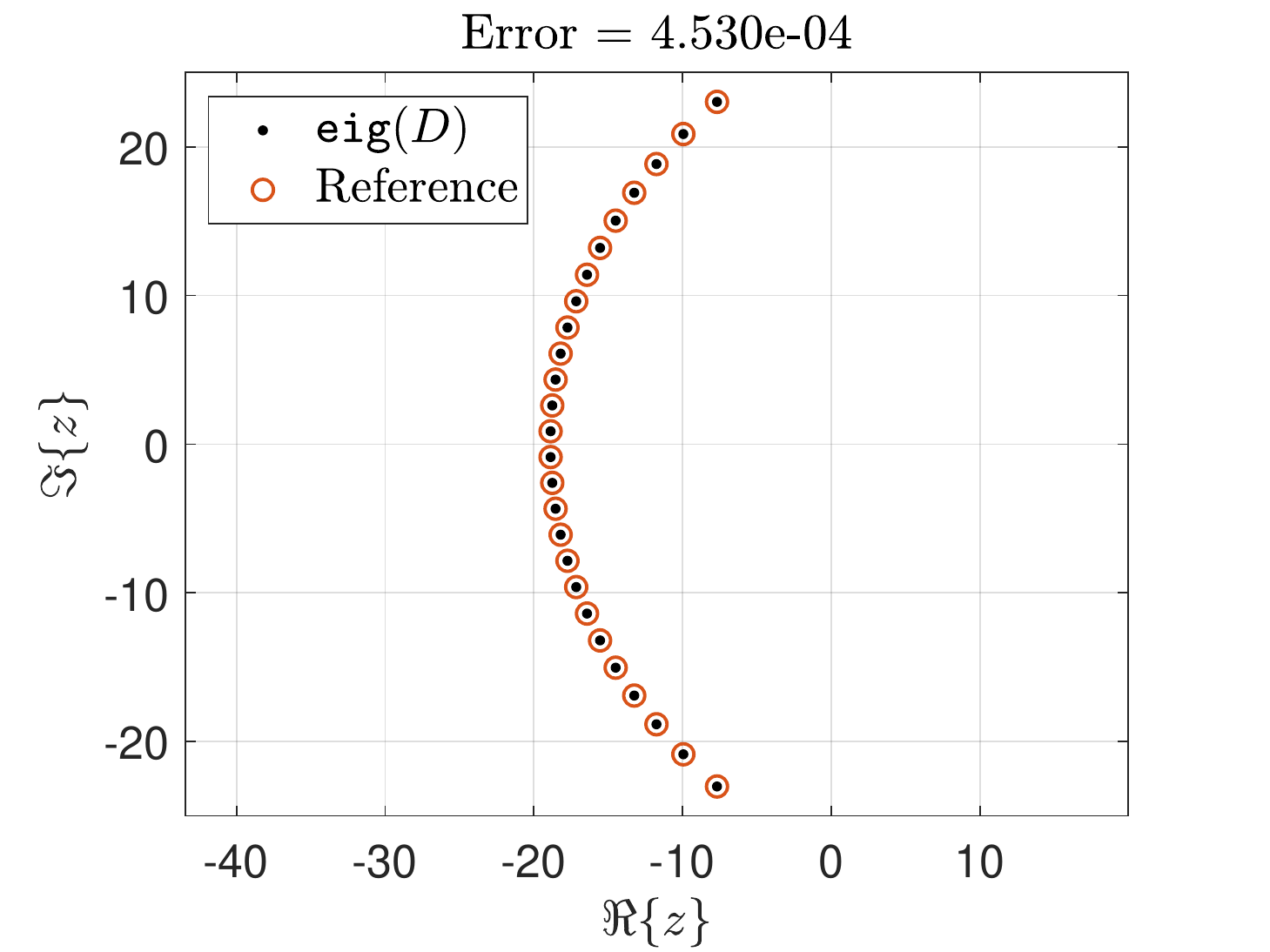} \quad
  \includegraphics[width=.33\textwidth]{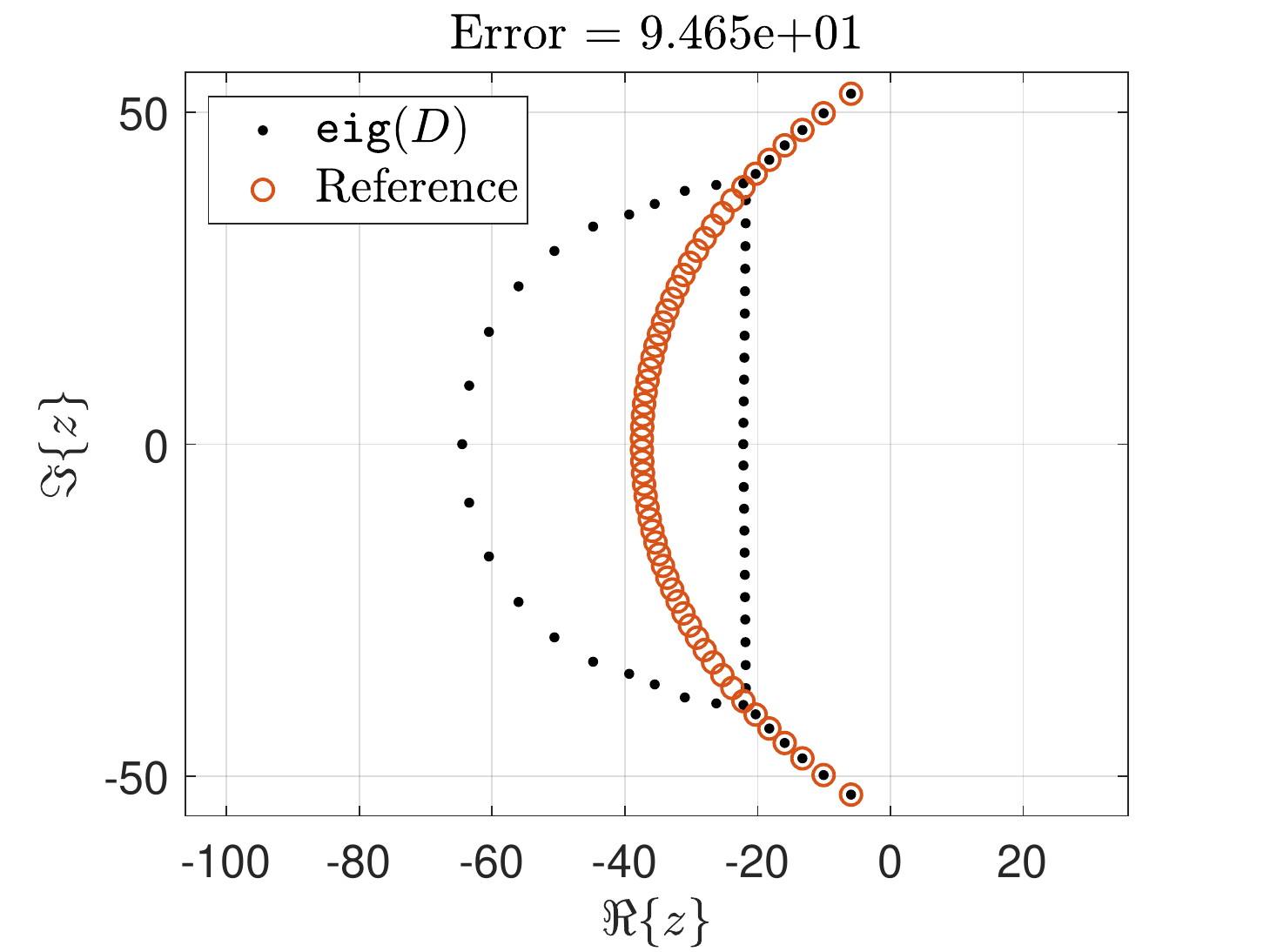}
  \caption{\small Eigenvalue distributions of  the first-order differentiation matrix  on Legendre points with $N = 28$ (left) and $N=56$ (right). The red circles denote the reference values, and the black dots denote the eigenvalues computed in  double precision.}
  \label{fig:Deig-Leg}
\end{figure}

The objective of this paper is to conduct rigorous eigenvalue analysis of the spectral matrices resulted from LDPG spectral methods for the first-order and general $m$th order IVPs.   The  argument of the analysis and main findings are summarized as follows.
\begin{itemize}
\item[(i)] We properly construct the  basis polynomials for the solution and dual approximation spaces so that the matrix of the highest derivative term is an identity matrix, but the mass matrix is a non-normal sparse matrix. This enables us to associate the mass matrix with the Jacobi matrix corresponding to the three-term recurrence relation of the Bessel or generalised Bessel polynomials, and then identify the eigenvalues and eigenvectors.

\smallskip
\item[(ii)] We reformulate the collocation method on Legendre points  (cf.\!\! \cite{MR867974,Trefethen1987SINUM}) as a Petrov-Galerkin formulation that allows us to identify eigen-pairs of the spectral differentiation matrix via the usual Bessel polynomials (BPs).
Then we can answer some open questions in \cite{Trefethen1987SINUM} and locate the eigenvalues
from  an approach different from \cite{WangWaleffe2014a}.
\smallskip

\item[(iii)] We introduce two techniques  to effectively deal with the (fully implicit) time-discretisation by the LDPG spectral method, which include (i) \emph{diagonalisation technique}; and (ii) \emph{QZ decomposition}.
Due to the severe  ill-conditioning of the eigenvector matrix, the diagonalisation technique, which involves the inverse of the eigenvector matrix,  is only numerically stable for small $N,$ so combine it with a multi-domain spectral method to  march in time sequentially. On the other hand, the QZ decomposition for such non-normal matrices is numerically stable for large $N,$ and we can solve the resulted linear systems simply by backward substitutions.
\end{itemize}

Finally, we remark that  there has been a continuing interest in solving evolutionary equations in a parallel-in-time (PinT) manner based on the diagonalisation technique initiated  by Maday and R{\o}nquist \cite{Maday2008}, see, for instance,  a recent work   \cite{Liu2021}  on a well-conditioned parallel-in-time algorithm  involving the diagonalisation  of
a nearly skew-symmetric matrix resulted with  a modified time-difference scheme.   Our LDPG method with diagonalisation can be directly applied in
a parallel-in-time manner, although in practice we also need to combine it with a multi-time-domain approach,
since we can only deal with a relatively short time  interval in parallel due to the ill-conditioning limitation mentioned above.

 The rest of this paper is organised as follows. In Section  \ref{sec:Bessel},
 we collect some related properties of the GBPs  and Legendre polynomials.
 In Section \ref{sec:IVP1st}, we conduct eigenvalue analysis for the LDPG spectral method for the first-order IVP, and reformulate the collocation method on Legendre points  \cite{MR867974,Trefethen1987SINUM} as a Petrov-Galerkin formulation to precisely characterise
 the eigenvalue distributions and the associated eigenvectors. We extend the analysis to the second-order IVPs in  Section \ref{sect4:2nd}, and higher-order IVPs in Section \ref{sect5:higher}.  We present two stable algorithms for computing zeros of the GBPs
 in  Section \ref{sect6:zero}. In Section \ref{Sect7:numerical}, we develop  a general framework for  space-time spectral methods, and apply it to solve  linear and nonlinear wave problems.
 We conclude the paper with some final remarks in Section 8.

\section{Properties of GBPs and Legendre polynomials}  
 \label{sec:Bessel}
\setcounter{equation}{0}
\setcounter{lmm}{0}
\setcounter{thm}{0}

The forthcoming eigenvalue analysis relies much  on  the GBPs, so we  collect below some relevant properties,
 which can be found in
\cite{Krall1949,Grosswald1978,Pasquini2000}.
We also review some properties of Legendre polynomials to be used later on.
Throughout this paper, let $\mathbb C$ (resp. $\mathbb R$) be the set all complex  (resp. real) numbers, and further  let
$\mathbb N=\{1,2,\cdots\}$ and  $\mathbb N_0=\{0,1,2,\cdots\}.$

\subsection{Generalised Bessel polynomials} For $\alpha,\beta\in {\mathbb C}, -\alpha\not \in {\mathbb N}_0$ and $\beta\not=0,$
the GBP,  denoted by $B_n^{(\alpha,\beta)}(z), z\in {\mathbb C}$, is a polynomial of degree $n$ that satisfies the second-order differential equation
\begin{equation}\label{gBP2ndODE2}
  z^2 y''(z) + (\alpha z+\beta)y'(z) = n(n+\alpha-1)y(z),\quad n\in {\mathbb N}_0,
\end{equation}
 which has the explicit  representation
\begin{equation}\label{gBPexp}
  B_n^{(\alpha, \beta)}(z) = \sum_{k=0}^{n} \binom{n}{k} \frac{\Gamma(n+k+\alpha-1)}{\Gamma(n+\alpha-1)} \Big(\frac{z}{\beta} \Big)^k.
\end{equation}
The GBPs can be generated by the three-term recurrence relation
\begin{equation}\label{eq:recur0}
\begin{dcases}
  B_{n+1}^{(\alpha,\beta)}(z) = \Big( a_n^{(\alpha)} \frac{z}{\beta} + b_n^{(\alpha)} \Big) B_n^{(\alpha,\beta)}(z) + c_n^{(\alpha)} B_{n-1}^{(\alpha,\beta)}(z),\quad n \geq 1, \\
  B_{0}^{(\alpha,\beta)}(z)=1,\quad B_{1}^{(\alpha,\beta)}(z)=1+\alpha\frac{z}\beta,
\end{dcases}
\end{equation}
where
\begin{equation}\label{coefabc}
\begin{split}
  & a_n^{(\alpha)} = \frac{(2n+\alpha)(2n+\alpha-1)}{n+\alpha-1}, \quad
  b_n^{(\alpha)} = \frac{(\alpha-2)(2n+\alpha-1)}{(n+\alpha-1)(2n+\alpha-2)}, \quad
  \\&c_n^{(\alpha)} = \frac{n(2n+\alpha)}{(n+\alpha-1)(2n+\alpha-2)}.
\end{split}
\end{equation}
Here the normalization $B_{n}^{(\alpha,\beta)}(0)=1$ is adopted.

The GBPs are orthogonal on the unit circle (see \cite[p. 30]{Grosswald1978}):
\begin{equation}\label{orthUnitC}
  \frac{1}{2\pi\ri}\int_{|z|=1} B_m^{(\alpha,\beta)}(z) B_n^{(\alpha,\beta)}(z) \rho^{(\alpha,\beta)}(z) \rd z
  = \gamma_n^{(\alpha, \beta)} \delta_{mn},
\end{equation}
where the weight function and the normalization constant are given by
 \begin{equation}\label{Weightrho}
 \begin{split}
 & \rho^{(\alpha,\beta)}(z):=\sum_{k=0}^\infty \frac{\Gamma(\alpha)}{\Gamma(k+\alpha-1)}\Big(\!-\frac \beta z\Big)^k={_1F_1}(1, \alpha-1; -\beta/z), \\
& \gamma_n^{(\alpha, \beta)} := \frac{(-1)^{n+1} \beta \Gamma(\alpha)n!}{(2n+\alpha-1) \Gamma(n+\alpha-1)}.
 \end{split}
 \end{equation}
 Here ${_1F_1}(\,)$ is the hypergeometric function.

Observe from \eqref{gBPexp}-\eqref{eq:recur0}  that the parameter $\beta$ plays a role as a scaling factor, and there holds
\begin{equation*}\label{Bnbeta}
 B_n^{(\alpha,-\beta)}(z)=B_n^{(\alpha,\beta)}(-z).
 \end{equation*}
  For simplicity, we  denote the GBP with $\beta=2$ by $B_n^{(\alpha)}(z):=B_n^{(\alpha,2)}(z).$
  When $\alpha=\beta=2$, the GBPs reduce to the classical Bessel polynomials (BPs), denoted by $B_n(z):=B_n^{(2,2)}(z).$
Accordingly, the recurrence relation \eqref{eq:recur0}-\eqref{coefabc}  becomes
\begin{equation}\label{3termBPdefn}
\begin{dcases}
B_{n+1}(z)= (2n+1)z B_n(z)+B_{n-1}(z),\quad n\ge 1,\\
B_0(z)=1,\quad B_1(z)=1+z.
\end{dcases}
\end{equation}
Moreover, the orthogonality \eqref{orthUnitC}-\eqref{Weightrho} simply reads
\begin{equation*}\label{orthA}
  \frac{1}{2\pi\ri}\int_{|z|=1} B_m(z) B_n(z) \re^{-2/z}  {\rm d}z
  = (-1)^{n+1} \frac{2}{2n+1} \delta_{mn}.
\end{equation*}
We summarize the most important  properties on zero distributions of GBPs  below.
\begin{theorem}\label{lem:Bnzero}  Let $\alpha\in {\mathbb R}$ and $n+\alpha-1>0.$ Then we have the following properties.
\begin{itemize}
\item[(i)] All zeros of $B_n^{(\alpha)}(z)$ are simple and conjugate of each other.
\smallskip
\item[(ii)] For  $\alpha\ge -1$ and $n\ge 2,$   all zeros of $B_n^{(\alpha)}(z)$ are in the open left half-plane.
\smallskip
\item[(iii)] For $n\ge 2,$ all zeros are located in the crescent-shaped region
  \begin{equation}\label{eq:annulBna0}
    {\mathcal R}_{n, \alpha}^- := \Big\{ z = \rho \re^{\ri\theta} \in \mathbb{C}: \frac{2}{2n+\alpha-\frac{2}{3}} < \rho\le  \frac{1-\cos\theta}{n+\alpha-1},\;\;
    \theta\in  (-\pi,-\Theta_{n,\alpha})\cup (\Theta_{n,\alpha},\pi] \Big\},
  \end{equation}
  where the equal sign can be attained only  $\theta=\pi,$ and
  \begin{equation}\label{thetanalpha}
  \Theta_{n,\alpha}:=\arccos \Big( \frac{-\alpha}{2n+\alpha-2} \Big).
  \end{equation}
 \end{itemize}
  \end{theorem}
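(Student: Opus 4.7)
My plan is to combine the three analytic handles available on $B_n^{(\alpha)}(z)$: the second order ODE \eqref{gBP2ndODE2}, the explicit series \eqref{gBPexp}, and the three term recurrence \eqref{eq:recur0}--\eqref{coefabc} viewed through its associated tridiagonal Jacobi matrix, and to treat the three parts in turn.

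For part (i), simplicity is immediate from the ODE. The normalisation $B_n^{(\alpha)}(0) = 1$ forces every zero $z_0$ to be nonzero, so the leading coefficient $z_0^2$ of $y''$ in \eqref{gBP2ndODE2} does not vanish at $z_0$; a double root $B_n^{(\alpha)}(z_0) = (B_n^{(\alpha)})'(z_0) = 0$ would then violate the uniqueness theorem for linear second order ODEs, since it would force $B_n^{(\alpha)} \equiv 0$. Closure of the zero set under complex conjugation is just the reality of the coefficients of \eqref{gBPexp} for real $\alpha$.

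For part (ii), the nonexistence of nonnegative real zeros is easy from \eqref{gBPexp}: when $\alpha \ge -1$ and $n + \alpha - 1 > 0$ every coefficient of the series is strictly positive, so $B_n^{(\alpha)}(x) > 0$ for all $x \ge 0$. The delicate half is excluding zeros with strictly positive real part. I would pass to the reciprocal polynomial $z^n B_n^{(\alpha)}(1/z)$, whose zeros are the reciprocals of those of $B_n^{(\alpha)}$, and apply a Hurwitz type stability argument in the style of Grosswald's monograph; the positivity of $a_n^{(\alpha)}$ and $c_n^{(\alpha)}$ in \eqref{coefabc} drives an induction on $n$ that places all reciprocal zeros in the open left half-plane, and inversion preserves that half-plane.

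Part (iii) is where the real work sits, and I would split the envelope into the outer--angular bound and the inner modulus bound. For the outer bound $(n+\alpha-1)\rho \le 1 - \cos\theta$ together with the angular exclusion $|\theta| \le \Theta_{n,\alpha}$, I would retain the constant, linear, and quadratic terms of \eqref{gBPexp} at a zero and use the positivity of the remaining binomial--Gamma coefficients to dominate them; taking real and imaginary parts and eliminating the higher order contribution yields a quadratic inequality in $\rho$ and $\cos\theta$ that rearranges precisely into the stated envelope, with the angular exclusion arising where the corresponding discriminant becomes nonpositive and matching $\Theta_{n,\alpha}$ from \eqref{thetanalpha}. Equality can be attained only on the real axis, and a direct evaluation of $B_n^{(\alpha)}(-\rho)$ shows the one real zero must sit at $\theta = \pi$. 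The inner bound $\rho > 2/(2n + \alpha - 2/3)$ is the step I expect to be the main obstacle: after the scaling $x = z/2$ it reduces to a sharp lower bound on the moduli of the eigenvalues of the Jacobi matrix associated with \eqref{eq:recur0}, and a plain Gershgorin estimate only gives the crude denominator $2n + \alpha - 2$. Recovering the refined constant $2/3$ will require a careful Rayleigh quotient or perturbation analysis, for instance comparison with a matrix whose spectrum is known explicitly, or the asymptotic location of GBP zeros developed by Pasquini; this is the step I anticipate will demand the most care.
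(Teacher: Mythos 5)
Your part (i) is sound and is essentially the standard argument (a double zero at $z_0\neq 0$ contradicts uniqueness for the ODE \eqref{gBP2ndODE2}, and reality of the coefficients gives conjugate symmetry); the paper simply cites Grosswald's Theorem~1$'$ for this. But for (ii) and (iii) your proposal has genuine gaps, and it is worth knowing that the paper does not prove these parts from first principles either: it quotes de Bruin--Saff--Varga --- their Theorem~4.3 for the open left half-plane statement, and their Theorems~5.1, 3.1 and 4.1 for the annulus $\frac{2}{2n+\alpha-2/3}<|z|\le \frac{2}{n+\alpha-1}$, the cardioid $r<\frac{1-\cos\theta}{n+\alpha-1}$, and the sector $|\theta|>\Theta_{n,\alpha}$ --- and then defines ${\mathcal R}_{n,\alpha}^-$ as the intersection of the three regions. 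These are substantial theorems, not routine estimates, and your sketch does not reconstruct them.

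Concretely: in (ii), the claim that positivity of $a_n^{(\alpha)}$ and $c_n^{(\alpha)}$ in \eqref{coefabc} ``drives an induction on $n$'' cannot work as stated, because the half-plane result is sharp at $\alpha=-1$: for every $\alpha<-1$ (with $n+\alpha-1>0$) the polynomial has at least one zero in the \emph{right} half-plane, yet the recurrence coefficients remain positive for such $\alpha$ when $n$ is large. Any correct argument must use the threshold $\alpha\ge -1$ in an essential way, and your sketch never does. In (iii), the proposal to keep the constant, linear and quadratic terms of \eqref{gBPexp} and dominate the tail by positivity is not carried out, and there is no evidence it produces the cardioid with the exact constant $n+\alpha-1$ or the sector angle $\Theta_{n,\alpha}$; moreover you explicitly concede that the inner bound $\rho>2/(2n+\alpha-\frac23)$ --- whose refined constant $\frac23$ is precisely what distinguishes it from the crude Gershgorin radius $2/(2n+\alpha-2)$ --- is beyond what you can establish. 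That inner bound is not a decorative refinement: it is exactly what later yields the lower bounds $|\lambda_{N,j}|>1/(N+7/6)$ and $|\bar\lambda_{N,j}|>1/(N+2/3)$ in Theorems~\ref{thm:eigMd} and \ref{thm:eigMp}, so leaving it open leaves the theorem unproved. The honest repair is the paper's route: cite the three enclosure theorems of de Bruin, Saff and Varga and take their intersection.
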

\begin{proof}
According to   \cite[Theorem 1$'$]{Grosswald1978},
 all zeros of $B_n^{(\alpha)}(z)$ are simple. As $B_n^{(\alpha)}(z)$ is a polynomial of real coefficients, its zeros are conjugate of each other.  The second statement  is  presented in \cite[Theorem 4.3]{Bruin1981}. In fact, this result is sharp in the  sense that for
 $\alpha<-1$, the polynomial $B_n^{(\alpha)}(z)$ has at least one zero in the right half-plane.
Moreover, as stated in \cite[Theorem 5.1]{Bruin1981},  all zeros of  $B_n^{(\alpha)}(z)$ lie in the annulus
\begin{equation*}\label{Analpha}
 A(n,\alpha):=\Big\{ z\in {\mathbb C} :  \frac 2 {2n+\alpha-2/3}< |z|\le \frac{2}{n+\alpha-1}\Big\}.
\end{equation*}
More precisely, they are in the left half-annulus in view of (ii).  An improved upper enclosure is given by
\cite[Theorem 3.1]{Bruin1981}: all zeros of $B_n^{(\alpha)}(z)$ are confined in the cardioidal region
\begin{equation*}\label{CardioidR}
C(n, \alpha):=\Big\{z=r \mathrm{e}^{{\rm i} \theta} \in \mathbb{C}: 0<r<\frac{1-\cos \theta}{n+\alpha-1}\Big\} \cup\Big\{\frac{-2}{n+\alpha-1}\Big\}.
\end{equation*}
Moreover,  from \cite[Theorem \! 4.1]{Bruin1981}, we have that all zeros of $B_n^{(\alpha)}(z)$ with $n \ge 2$ are in the sector
  \begin{equation*}\label{Snalpha}
  S(n, \alpha) := \Big\{ z = \rho \re^{\ri \theta} \in \mathbb C :
    |\theta| > 
    \arccos \Big( \frac{-\alpha}{2n+\alpha-2} \Big),\;\; -\pi < \theta \le \pi \Big\}.
    \end{equation*}
Then we define
    $$
     {\mathcal R}_{n, \alpha}^- :=  A(n,\alpha)\cap C(n, \alpha)\cap S(n, \alpha),
    $$
    which leads to the statement (iii).
\end{proof}

\begin{rem}\label{nmBnalpha} \textit{It is also noteworthy of the following properties.
\begin{itemize}
\item[(a)] If $n\not =m,$ no zero of  $B_n^{(\alpha)}(z)$ can be a zero of $B_m^{(\alpha)}(z)$, see \cite[Theorem 1(b$'$)]{Grosswald1978}.
\item[(b)] For any fixed $\alpha$ and for odd $n,$ let $Z_{n,\alpha}$ be the unique (negative) real zero of  $B_n^{(\alpha)}(z).$
We find from \cite[Theorem \! 4.1]{Bruin1981} the asymptotic behaviour
\begin{equation}\label{realZnalpha}
Z_{n,\alpha}\approx -\frac 2{1.3254868n+ 1.00628995 \alpha-1.34983648+O((2n+\alpha-2)^{-1})},\quad n\gg 1.
\end{equation}
In particular, for the BP $B_n(z),$ its unique real zero for odd $n$ behaves like $Z_{n,2}=-\nu n^{-1}+O(n^{-2})$ with
$\nu\approx 1.50888\ldots.$
\end{itemize}
}
\end{rem}

\subsection{Legendre polynomials}
The Legendre  polynomials, denoted by $P_n(x)$, $x \in I:=(-1,1),$   are defined by the three-term recurrence relation (cf. \cite[p.\! 94]{Shen2011Book}):
\begin{equation*}
\begin{split}
  (n+1) P_{n+1}(x) = (2n+1) xP_n(x) - nP_{n-1}(x),\quad n \ge 1,
\end{split}
\end{equation*}
with $P_0(x) = 1$ and $P_1(x) = x$.
They satisfy the orthogonality
\begin{equation}\label{eq:PnOrtho}
  \int_{-1}^1 P_m(x) P_n(x) \rd x = \gamma_n \delta_{mn},\quad \gamma_n = \frac{2}{2n+1}.
\end{equation}
The derivatives of $P_n(x)$ satisfy \cite[p.\! 95]{Shen2011Book}:
\begin{equation}\label{eq:deriv12}
\begin{split}
  P_n'(x) &= \sum_{\substack{k=0 \\ k+n \mbox{ odd}}}^{n-1} (2k+1) P_{k}(x), \\
  P_{n}''(x) &= \sum_{\substack{k=0 \\ k+n \mbox{ even}}}^{n-2} (k+1/2)(n(n+1)-k(k+1)) P_{k+1}(x).
\end{split}
\end{equation}
A very useful property is
\begin{equation}\label{devfun}
(2n+1)P_n'(x)=P_{n+1}(x)-P_{n-1}(x),\quad n\ge 1.
\end{equation}
Moreover, we have the special values
\begin{equation}\label{eq:PnBV}
  P_n(\pm 1) = (\pm 1)^n,\quad P_n'(\pm 1) = \frac12  (\pm 1)^{n-1} n(n+1).
\end{equation}

\section{Eigenvalue analysis of  Petrov-Galerkin methods  for first-order IVPs}\label{sec:IVP1st}
\setcounter{equation}{0}
\setcounter{lmm}{0}
\setcounter{thm}{0}

In this section, we conduct  eigenvalue analysis of the LDPG spectral method for the first-order IVP, and then precisely characterise the eigenvalue distributions
 of the collocation differentiation matrix  at Legendre points in  \cite{MR867974,Trefethen1987SINUM}
by reformulating it as a Petrov-Galerkin form.
\subsection{Legendre dual Petrov-Galerkin method}\label{subsec:d1st}  To fix the idea, we consider the  model problem
\begin{equation}\label{eq:IVP1st}
   u'(t) =\sigma u(t),\;\;\; t \in I:=(-1,1); \quad u(-1)= u_0,
\end{equation}
for   given constants $\sigma\not=0$ and $u_0\not=0.$ 
It is   a  prototype problem for testing the stability of various numerical schemes, and    a good example to involve both the derivative and mass matrices.

We adopt the LDPG scheme (cf.\! \cite{Shen2003SINUM,Shen2007CMAME}) and  define the dual approximation spaces
 \begin{equation*}\label{dualspaces}
  {}_0{\mathbb P_N}:=\{ \phi \in \mathbb{P}_N : \phi(-1)=0 \},\quad {}^0{\mathbb P_N}:=\{ \psi \in \mathbb{P}_N : \psi(1)=0 \}.
 \end{equation*}
We seek $u_N=u_0+ v_N\in {\mathbb P_N}$ with $v_N\in {}_0{\mathbb P}_N$
such that
\begin{equation}\label{eq:dPGalM}
  (v_N', \psi) -\sigma (v_N, \psi)=\sigma (u_0, \psi), \quad  \forall\, \psi \in {}^0{\mathbb P}_N.
\end{equation}
Choose the basis functions for the trial function space ${}_0{\mathbb P_N}$ as
\begin{equation}\label{eq:dPetbas}
  \phi_k(t) = \frac{k+1}{\sqrt{2}}(P_{k}(t)+P_{k+1}(t)),\quad  0 \leq k \leq N-1,
\end{equation}
and  for the  test function space ${}^0{\mathbb P_N}$ as
\begin{equation}\label{eq:dPetbas2}
  \phi_j^*(t) = \frac{1}{\sqrt{2}(j+1)}(P_{j}(t)-P_{j+1}(t)),\quad  0 \leq j \leq N-1.
\end{equation}
Write and denote
\begin{equation*}\label{eq:dnotation}
  v_N(t) = \sum_{k=0}^{N-1} \tilde v_k \phi_k(t),\quad
  \tilde{ \bs v} = (\tilde v_0, \cdots, \tilde v_{N-1})^\top.
\end{equation*}
The corresponding  linear system of \eqref{eq:dPGalM} reads
\begin{equation}\label{eq:dodesystem}
  \big({\bs I}_{N}  - \sigma \bs M\big) \tilde{ \bs v} = \sqrt{2}\sigma u_0\bs e_1,
\end{equation}
where $\bs I_{\!N}$  is the ${N \times N}$ identity matrix and $\bs e_1$ is its first column. Note that with the above choice, we can  verify readily from  \eqref{eq:PnOrtho}-\eqref{eq:deriv12} that  $(\phi_k', \phi_j^*) = \delta_{jk},$ and
the mass matrix $\bs M \in \mathbb R^{N \times N}$ is a tri-diagonal (but non-symmetric) matrix with  nonzero entries given by
\begin{equation}\label{eq:Mat1st}
  \bs M_{jk} = (\phi_k, \phi_j^*) =
  \begin{dcases}
    \frac{j}{(j+1)(2j+1)}, & k=j-1,\;\;\; 1 \le j \le N-1, \\
    \frac{1}{2j+1} - \frac{1}{2j+3}, & k=j,\;\; 0 \le j \le N-1, \\
    -\frac{j+2}{(j+1)(2j+3)}, & k=j+1,\;\; 0 \le j \le N-2.
  \end{dcases}
\end{equation}

Notably,  the matrix $-\bs M$ is identical to the Jacobi matrix associated with the three-term recurrence relation
of $B_N^{(3)}(z)$, so we can  characterise the eigenvalues and eigenvectors of $\bs M$ as follows.
\begin{thm}\label{thm:eigMd}
Let $\{\lambda_j := \lambda_{N,j}\}_{j=1}^N$ be the eigenvalues of the tri-diagonal matrix $\bs M$ with $N \ge 2$ given in \eqref{eq:Mat1st}. Then we have the following properties of the eigenvalues.
\begin{itemize}
\item [(i)] The eigenvalues  $\big\{\lambda_{j}=-z_{j}^{(3)}\big\}_{j=1}^N$ with $\big\{z_{j}^{(3)}:=z_{N,j}^{(3)}\big\}$
 being zeros of the generalised Bessel polynomial $B_N^{(3)}(z)$ defined in
\eqref{eq:recur0} with $\alpha=3,\beta=2$, so the eigenvalues  are all simple and  conjugate of each other. For odd $N,$ its unique real
eigenvalue behaves like $-Z_{N,3}$ in \eqref{realZnalpha}.
\smallskip
\item [(ii)] All the eigenvalues lie in the open right half-plane, and are located in a crescent-shaped region:
\begin{equation}\label{annula_d}
  \lambda_j \in  \mathcal{R}_{N,3}^+ := \Big\{ z = \rho \re^{\ri \theta} \in \mathbb{C}: \frac{1}{N +  7/ 6} < \rho \le \frac{1+\cos\theta}{N+2},\;\;
    |\theta| <\pi-\Theta_{N,3} \Big\},
\end{equation}
for $1 \le j \le N,$ where $\Theta_{N,3}$ is given in  \eqref{thetanalpha} and the equal sign can be only attainable at $\theta=0$.
\end{itemize}
The corresponding eigenvectors are
\begin{equation}\label{djeigen}
  \bs v_j=\frac{\bs b_j}{|\bs b_j|}, \quad \bs b_j:= \big(B_0^{(3)}(-\lambda_j), B_1^{(3)}(-\lambda_j), \cdots, B_{N-1}^{(3)}(-\lambda_j)\big)^\top,\;\; 1 \le j \le N.
\end{equation}
\end{thm}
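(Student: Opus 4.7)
The plan is to identify the matrix $-\bs M$ with the non-symmetric tridiagonal matrix associated with the three-term recurrence \eqref{eq:recur0} of the generalised Bessel polynomial $B_n^{(3)}(z):=B_n^{(3,2)}(z)$, from which the eigenvalue/eigenvector description follows by a classical argument, after which the location result is obtained by reflecting the zero regions of Theorem \ref{lem:Bnzero}.

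First, I would specialise the coefficients \eqref{coefabc} to $\alpha=3$, $\beta=2$, and then rearrange \eqref{eq:recur0} so that $z\,B_n^{(3)}(z)$ is isolated on the left-hand side. A direct algebraic check should yield
\begin{equation*}
  -z\,B_n^{(3)}(z) = \frac{n}{(n+1)(2n+1)}B_{n-1}^{(3)}(z) + \frac{2}{(2n+1)(2n+3)}B_n^{(3)}(z) - \frac{n+2}{(n+1)(2n+3)}B_{n+1}^{(3)}(z),
\end{equation*}
valid for all $n\ge 0$ with the convention $B_{-1}^{(3)}\equiv 0$. The three coefficients on the right are precisely $\bs M_{n,n-1},\bs M_{n,n},\bs M_{n,n+1}$ from \eqref{eq:Mat1st}. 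Setting $\lambda=-z$ and $\bs b(\lambda):=(B_0^{(3)}(-\lambda),\ldots,B_{N-1}^{(3)}(-\lambda))^\top$, rows $n=0,1,\ldots,N-2$ of $\bs M\bs b(\lambda)=\lambda\bs b(\lambda)$ are therefore automatic, while the final row $n=N-1$ reduces to $B_N^{(3)}(-\lambda)=0$. Hence the eigenvalues of $\bs M$ are exactly $\lambda_j=-z_j^{(3)}$, $j=1,\ldots,N$, and \eqref{djeigen} supplies the corresponding eigenvectors; since $B_0^{(3)}\equiv 1$ each $\bs b(\lambda_j)$ is nonzero, and simplicity together with the complex-conjugate pairing descend directly from Theorem \ref{lem:Bnzero}(i).

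The containment \eqref{annula_d} follows by applying the reflection $z\mapsto -z$ to the region $\mathcal{R}_{N,3}^-$ of Theorem \ref{lem:Bnzero}(iii). Under $\rho\re^{\ri\theta}\mapsto\rho\re^{\ri(\theta+\pi)}$, the angular set $(-\pi,-\Theta_{N,3})\cup(\Theta_{N,3},\pi]$ transforms into $|\theta|<\pi-\Theta_{N,3}$, the bound $(1-\cos\theta)/(N+2)$ becomes $(1+\cos\theta)/(N+2)$, and $2/(2N+7/3)=1/(N+7/6)$, reproducing the claimed half-crescent. The open-right-half-plane assertion is the reflection of Theorem \ref{lem:Bnzero}(ii), and the asymptotic $-Z_{N,3}$ of the unique real eigenvalue for odd $N$ is a direct transcription of Remark \ref{nmBnalpha}(b).

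The only real obstacle is the bookkeeping surrounding the unnormalised recurrence: since \eqref{eq:recur0} is not in symmetric-Jacobi form, one must be careful that the three extracted coefficients match the \emph{rows} (not columns) of $\bs M$, and that the boundary cases $n=0$ and $n=N-1$ are handled correctly so that the $N$ simple zeros of $B_N^{(3)}$ furnished by Theorem \ref{lem:Bnzero}(i) exhaust the $N$-dimensional spectrum of $\bs M$.
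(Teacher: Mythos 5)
Your proposal is correct and follows essentially the same route as the paper: both rearrange the three-term recurrence \eqref{eq:recur0} (the paper writes it with $\beta=-2$, which is equivalent to your direct isolation of $-zB_n^{(3)}(z)$) to obtain $-z\,\bs b(z)=\bs M\bs b(z)-\frac{N+1}{N(2N+1)}B_N^{(3)}(z)\bs e_N$, set $z=z_j^{(3)}$, and then import (i)--(iii) of Theorem \ref{lem:Bnzero} and Remark \ref{nmBnalpha} under the reflection $z\mapsto -z$. Your extra care about matching rows versus columns and about exhausting the spectrum via the $N$ simple zeros is exactly the right bookkeeping and is consistent with the paper's argument.
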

\begin{proof} From the three-term recurrence relation   \eqref{eq:recur0} with $\alpha=3, \beta = -2,$ we obtain
  \begin{equation}\label{3termBP20-2}
    \begin{dcases}
      -zB_0^{(3)}(z)  =   \frac23 B_0^{(3)}(z) - \frac23 B_1^{(3)}(z),\\
      -zB_j^{(3)}(z)   =   \frac{j}{(j+1)(2j+1)} B_{j-1}^{(3)}(z) + \frac{2}{(2j+1)(2j+3)} B_{j}^{(3)}(z)\\
       \qquad \qquad\quad  - \frac{j+2}{(j+1)(2j+3)} B_{j+1}^{(3)}(z), \quad 1 \le j \le N-1.
    \end{dcases}
  \end{equation}
 Then  we can rewrite \eqref{3termBP20-2} as  the matrix form and find from \eqref{eq:Mat1st} that
  \begin{equation}\label{notationBe}
    -\!z\bs b(z) = \bs M \bs b(z) - \frac{N+1}{N(2N+1)} B_{N}^{(3)}(z) \bs e_N,
  \end{equation}
  where
  $$\bs b(z):=\big(B_0^{(3)}(z), B_1^{(3)}(z), \cdots, B_{N-1}^{(3)}(z)\big)^\top,\quad \bs e_N:=(0, \cdots, 0, 1)^\top\in {\mathbb R}^N.$$
  Taking  $z=z_j^{(3)}=-\lambda_j$ in \eqref{notationBe},
  we derive immediately that $\lambda_j \bs v_j=\bs M \bs v_j$ for $1\le j\le N.$
Then the properties and distribution of the eigenvalues stated in (i)-(ii) are  direct consequences of  Theorem \ref{lem:Bnzero} with $\alpha=3$.
In view of Remark \ref{nmBnalpha}, the unique real eigenvalue of $\bs M$ for odd order $N$ has the asymptotic behaviour as in \eqref{realZnalpha}.
\end{proof}

We depict in  Figure \ref{fig:lamj51}  (left)  the distributions of the eigenvalues $\{\lambda_j\}_{j=1}^N$ of $\bs M$ with $N=51$, where we highlight
the crescent-shaped region $\mathcal R_{N, 3}^+$ in  \eqref{annula_d}, and also plot
the cardioid curve ${\mathcal C}_{N,3}: \rho=\frac{1+\cos\theta}{N+2}$ for $|\theta|\le \pi.$
It is evident from \eqref{annula_d} that
\begin{equation*}\label{evident33}
\frac{1}{N +  7/ 6} < |\lambda_j|=|\lambda_{N,j}|\le \frac 2 {N+2},\quad 1\le j\le N,
\end{equation*}
so   all  $\{|\lambda_{N,j}|\}_{j=1}^N$ behave like $O(N^{-1}).$ Indeed, we observe from  Figure \ref{fig:lamj51}  (middle) that  the ``radii''  of the ``semi-circles''
decay with respect to $N.$  In Figure \ref{fig:lamj51}  (right), we demonstrate the behaviour of the eigenvalues with the smallest and largest magnitudes from which we speculate
\begin{equation*}\label{evident330}
\min_j |\lambda_{j}|\simeq \frac {C_{\rm min}} {N},\quad \max_j |\lambda_{j}|\simeq \frac {C_{\rm max}} {N}, \quad
\end{equation*}
and $C_{\rm min}\to 1, C_{\rm max}\to 1.5$ for $N\gg 1.$

\begin{figure}[htp]
  \begin{center}
  \includegraphics[width=.34\textwidth]{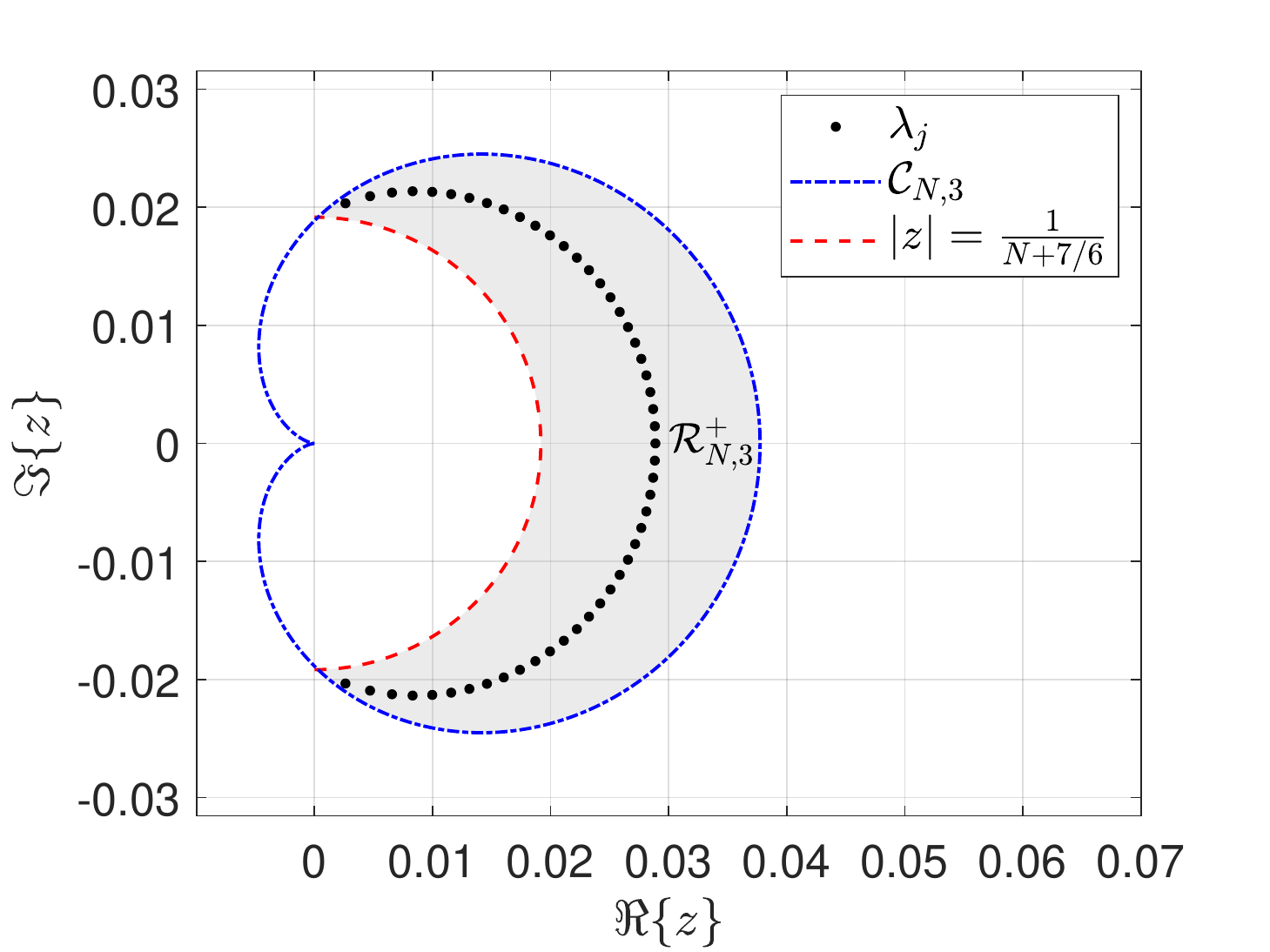}\hspace*{-10pt}
  \includegraphics[width=.34\textwidth]{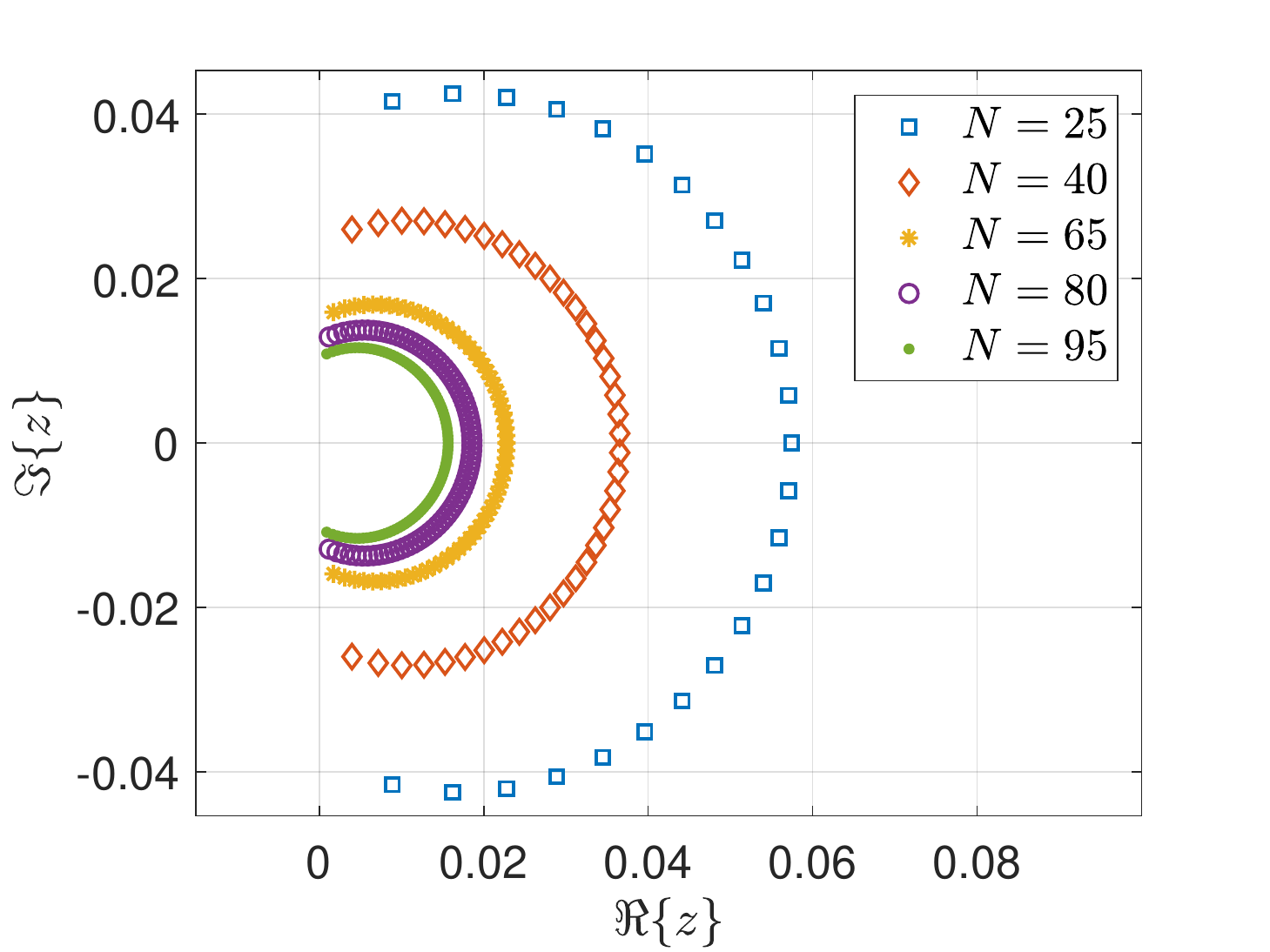} \hspace*{-10pt}
  \includegraphics[width=.34\textwidth]{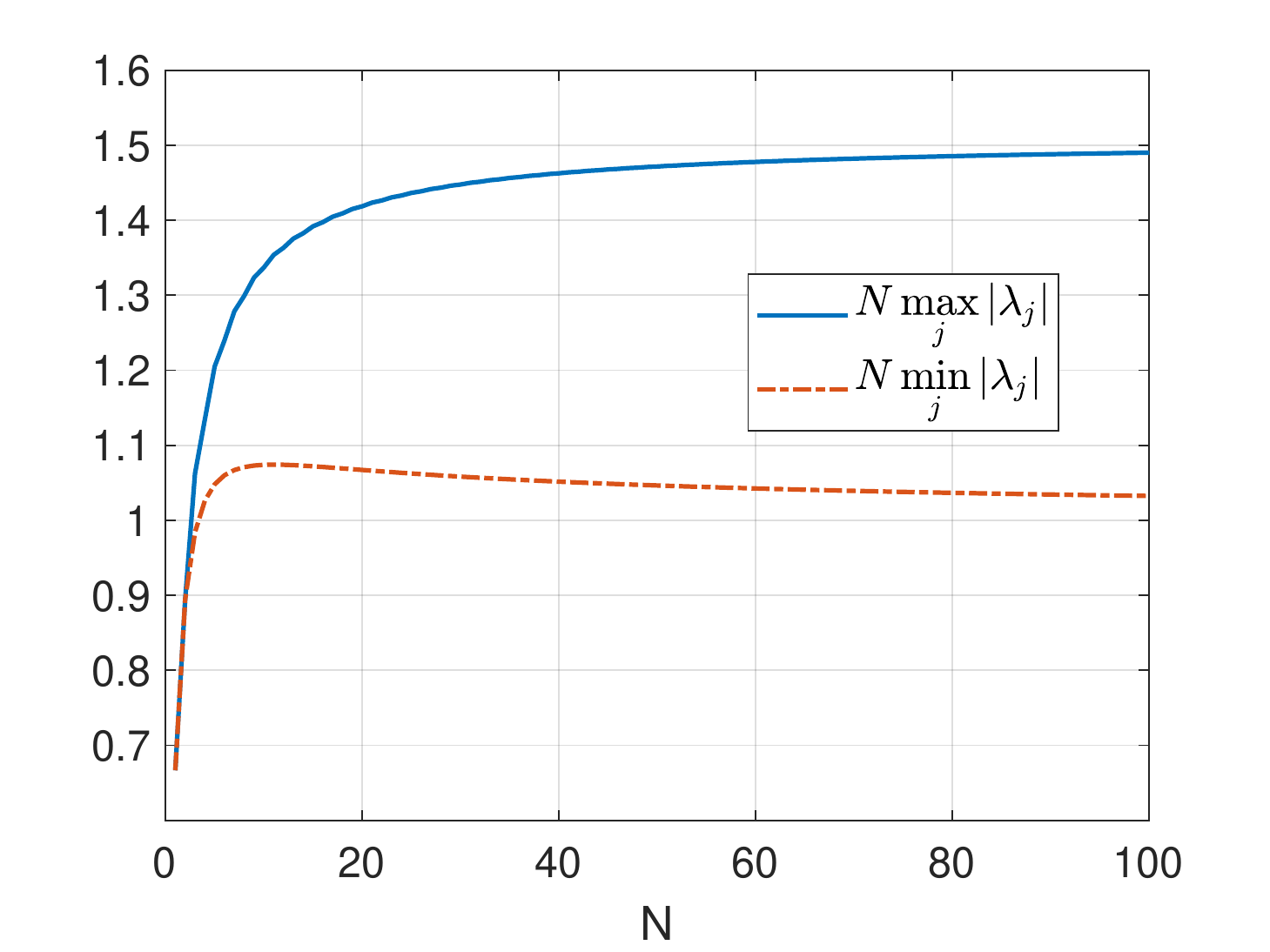}
  \vspace*{-10pt}
  \caption{\small Eigenvalues of $\bs M$ in  \eqref{eq:Mat1st}. Left:  Distribution of $\lambda_j=\lambda_{N,j}$ with $N=51$ in the crescent-shaped region (left).
  Middle:  Distributions of the eigenvalues with various $N$.  Right: Behaviour of the eigenvalues with the smallest and largest magnitudes for $1\le N\le 100.$}
  \end{center}
  \label{fig:lamj51}
\end{figure}

In the above derivations, we  chose the basis functions in \eqref{eq:dPetbas}-\eqref{eq:dPetbas2} with proper coefficients so that the derivative matrix in
the linear system \eqref{eq:dodesystem} became an identity matrix, but as a matter of fact, the eigenvalues are  independent of the choice of basis functions.  More precisely, consider any two sets of bases:
$$
 {}_0{\mathbb P_N}={\rm span}\big\{\psi_k\,:\,0\le k\le N-1\big\}, \quad {}^0{\mathbb P_N}={\rm span}\big\{\psi_k^*\,:\,0\le k\le N-1\big\}.
$$
Let $\bs P,\bs Q$ be the corresponding transformation matrices such that
$$
\psi_k(t)=\sum_{j=0}^{N-1} \bs P_{jk} \phi_j(t),\quad  \psi_k^*(t)=\sum_{j=0}^{N-1} \bs Q_{kj} \phi_j^*(t).
$$
Further, let $\widehat{\bs S}, \widehat {\bs M}$ be the related derivative and mass matrices, that is, $\widehat{\bs S}_{jk}=(\psi_k', \psi_j^*)$ and $\widehat {\bs M}_{jk}=(\psi_k, \psi_j^*).$
It can be shown by  direct manipulations the following results, so we leave its proof to the interested readers.
\begin{corollary}\label{cor:newbas}
The eigenvalues  $\{\lambda_j := \lambda_{N,j}\}_{j=1}^N$ of the matrix $\bs M$  in \eqref{eq:Mat1st} are also the eigenvalues of the generalised eigenvalue problem:
\begin{equation*}\label{geneign}
\widehat{\bs S} \hat{\bs v}=\lambda  \widehat{\bs M} \hat{\bs v},
\end{equation*}
where the   corresponding eigenvectors are
  \begin{equation*}\label{eq:eigpq}
    \hat{\bs v}_j =  \frac{(\bs Q \bs P)^{-1} \bs v_j}{| (\bs Q \bs P)^{-1} \bs v_j|}, \quad 1 \le j \le N,
  \end{equation*}
  with  $\{\bs v_j\}_{j=1}^N$ being the eigenvectors of  $\bs M$ given in \eqref{djeigen}.
\end{corollary}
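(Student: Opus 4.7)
The argument is a routine change-of-basis computation; the whole content is bookkeeping with the transition matrices $\bs P$ and $\bs Q$.

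First, I would substitute the expansions $\psi_k = \sum_j \bs P_{jk}\phi_j$ and $\psi_k^* = \sum_j \bs Q_{kj}\phi_j^*$ into the definitions $\widehat{\bs S}_{jk} = (\psi_k', \psi_j^*)$ and $\widehat{\bs M}_{jk} = (\psi_k, \psi_j^*)$, then apply the biorthogonality $(\phi_k', \phi_j^*) = \delta_{jk}$ noted just before \eqref{eq:Mat1st} together with the definition $\bs M_{m\ell} = (\phi_\ell, \phi_m^*)$. Pulling out the sums in the correct order, a short index-chase gives the factorisations
\begin{equation*}
\widehat{\bs S} = \bs Q\bs P, \qquad \widehat{\bs M} = \bs Q\bs M\bs P.
\end{equation*}

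Next, I would use that $\bs P$ and $\bs Q$ are invertible (they are transition matrices between two bases of the same $N$-dimensional space) to cancel the common left factor $\bs Q$ in the generalised eigenvalue problem $\widehat{\bs S}\hat{\bs v} = \lambda\widehat{\bs M}\hat{\bs v}$. The resulting relation $\bs P\hat{\bs v} = \lambda\,\bs M\bs P\hat{\bs v}$ is just a similarity transformation of the eigenvalue problem for $\bs M$, so after introducing the change of variables $\hat{\bs v} = (\bs Q\bs P)^{-1}\bs v$ the spectrum collapses onto that of $\bs M$. The spectral coincidence with $\{\lambda_j\}_{j=1}^N$ then follows immediately from Theorem \ref{thm:eigMd}.

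Finally, substituting the eigenvectors $\bs v = \bs v_j$ of $\bs M$ given in \eqref{djeigen} into the change of variables and dividing by the Euclidean norm yields the formula $\hat{\bs v}_j = (\bs Q\bs P)^{-1}\bs v_j/|(\bs Q\bs P)^{-1}\bs v_j|$. The only real obstacle is purely bookkeeping: because $\bs P$ stores the coefficients of the new trial basis as columns while $\bs Q$ stores those of the new test basis as rows, one must keep the orders straight so that $\widehat{\bs M}$ comes out as $\bs Q\bs M\bs P$ rather than $\bs P\bs M\bs Q$ or a transposed variant. Once this is untangled, no further essential step is required beyond Theorem \ref{thm:eigMd}.
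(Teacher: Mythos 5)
The paper offers no proof of this corollary (it is explicitly ``left to the interested readers''), so your proposal can only be judged on its own terms; the factorisation step you carry out is certainly the intended ``direct manipulation''. Indeed, from $\psi_k=\sum_j \bs P_{jk}\phi_j$, $\psi_k^*=\sum_j\bs Q_{kj}\phi_j^*$, the biorthogonality $(\phi_k',\phi_j^*)=\delta_{jk}$ and $\bs M_{lm}=(\phi_m,\phi_l^*)$, one gets $\widehat{\bs S}=\bs Q\bs P$ and $\widehat{\bs M}=\bs Q\bs M\bs P$, and cancelling the invertible left factor $\bs Q$ reduces the pencil to $\bs P\hat{\bs v}=\lambda\,\bs M\bs P\hat{\bs v}$. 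Up to this point your argument is correct.

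The gap is in the final step: you assert the change of variables $\hat{\bs v}=(\bs Q\bs P)^{-1}\bs v$ and never substitute it back. If you do, with $\hat{\bs v}_j=(\bs Q\bs P)^{-1}\bs v_j=\bs P^{-1}\bs Q^{-1}\bs v_j$, the relation $\bs P\hat{\bs v}=\lambda\bs M\bs P\hat{\bs v}$ becomes $\bs Q^{-1}\bs v_j=\lambda\,\bs M\bs Q^{-1}\bs v_j$, which requires $\bs Q^{-1}\bs v_j$ to be an eigenvector of $\bs M$ --- false for a general invertible $\bs Q$. The substitution your own reduction dictates is $\bs w=\bs P\hat{\bs v}$, which turns the pencil into $\bs w=\lambda\bs M\bs w$, i.e.\ $\bs M\bs w=\lambda^{-1}\bs w$: the eigenvector is $\hat{\bs v}_j=\bs P^{-1}\bs v_j$ (up to normalisation), and the generalised eigenvalue at which $\widehat{\bs S}-\lambda\widehat{\bs M}$ is singular is $1/\lambda_j$, not $\lambda_j$ (equivalently, $\{\lambda_j\}$ are the eigenvalues of $\widehat{\bs S}^{-1}\widehat{\bs M}=\bs P^{-1}\bs M\bs P$). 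So a careful execution of your computation does not reproduce the eigenvector formula $(\bs Q\bs P)^{-1}\bs v_j$ under any reading of the statement, and it produces the reciprocal spectrum for the pencil as displayed. You need either to verify the claimed formula by direct substitution (which fails) or to flag and resolve this discrepancy with the statement; as written, the proposal glosses over exactly the point where the bookkeeping with $\bs P$ and $\bs Q$ actually matters.
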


\subsection{Collocation scheme at Legendre points}
Trefethen and Trummer \cite{Trefethen1987SINUM}  studied the collocation scheme for hyperbolic problems on the nodes consisting
of $N$  Legendre-Gauss points and an endpoint (to impose the underlying one-sided boundary condition).
In what follows, we employ this scheme to the model problem \eqref{eq:IVP1st}, and show that it can be reformulated into a Petrov-Galerkin scheme.
We can then characterise the eigenvalues of the spectral differentiation  matrix through  zeros of the BP.

Let $t_0=-1$ and  $t_1 < t_2 < \cdots < t_N$ be the Legendre-Gauss points, i.e., zeros of $P_N(t)$, and  let $\{h_j\}_{j=0}^N\subseteq {\mathbb P}_N$ be the corresponding Lagrange interpolating basis polynomials such that  $h_j(t_i)=\delta_{ij}.$
The collocation scheme in \cite{Trefethen1987SINUM}  adapted to \eqref{eq:IVP1st} is to find $u_N \in {\mathbb P_N}$ such that
\begin{equation}\label{Trencol}
u_N'(t_j)= \sigma u_N(t_j),\;\; 1 \le j \le N;\quad
   u_N(t_0)=u_{0}.
\end{equation}
Since  ${}_0{\mathbb P_N}={\rm span}\{h_j: 1\le j\le N\},$ we  write and denote
\begin{equation*}\label{uNxt}
u_N(t)=\sum_{j=0}^N u_N(t_j)h_j(t), \quad \bs u=(u_N(t_1),\cdots, u_N(t_N))^\top.
\end{equation*}
Then we obtain the linear system
\begin{equation}\label{odesystem0}
 \big( \bs D - \sigma \bs I_N \big)\bs u = - u_0 \bs h_0',
\end{equation}
where $\bs h_0' = (h_0'(t_1), \cdots, h_0'(t_N))^\top,$ and the spectral differentiation matrix $\bs D\in {\mathbb R}^{N\times N}$ with the entries ${\bs D}_{ij}=h_j'(t_i)$ for $1\le i,j\le N.$
One verifies readily that
$$
h_0(t) = (-1)^N P_N(t),\quad  h_j(t)=\frac{1+t}{1+t_j}l_j(t),\quad 1\le j\le N,
$$
where $\{l_j(t)\}_{j=1}^N\subseteq {\mathbb P}_{N-1}$ are the Lagrange interpolating basis polynomials at the Legendre-Gauss points $\{t_j\}_{j=1}^N.$ Thus, we can compute the entries of $\bs D$ via
\begin{equation}\label{Colentries}
 {\bs D}_{ij}=h_j'(t_i)=\frac 1 {1+t_j}\delta_{ij}+\frac{1+t_i}{1+t_j}l_j'(t_i),\quad 1\le i,j\le N,
\end{equation}
where $\{l_j'(t_i)\}$ can be computed by  the formulas in \cite[Ch.\! 3]{Shen2011Book}.

To facilitate the eigenvalue analysis of  $\bs D$, we reformulate  \eqref{Trencol} as a pseudospectral scheme.
Let $\{\omega_j\}_{j=1}^N$ be the Legendre-Gauss quadrature weights corresponding to $\{t_j\}_{j=1}^N,$ and
denote the induced discrete inner product by $\langle \cdot,\cdot \rangle_N.$ Then we obtain from  \eqref{Trencol} that
\begin{equation}\label{decaseA}
\langle u_N', \psi \rangle_N = \sigma \langle  u_N,\psi \rangle_N,\;\; u_N(-1) = u_0,\;\; \forall \psi\in {\mathbb P}_{N-1},
\end{equation}
As the Legendre-Gauss quadrature rule has a degree of precision $2N-1,$  the scheme \eqref{decaseA} is equivalent to  the Petrov-Galerkin scheme: find $u_N = u_0 + v_N \in {\mathbb P_N}$ with $v_N \in {_0\mathbb P_N}$ such that
\begin{equation}\label{decaseAB0}
(v_N',\psi) - \sigma(v_N,\psi) = \sigma (u_0, \psi),\;\;  \forall \psi\in {\mathbb P}_{N-1}.
\end{equation}
It is essential to choose the basis functions for ${}_0{\mathbb P_N}$ as
\begin{align}\label{eq:Petbas}
  \phi_0(t) = \frac{1}{\sqrt{2}}(t+1),\quad \phi_k(t) = \frac{1}{\sqrt{2}}(P_{k+1}(t)-P_{k-1}(t)),\quad 1\leq k\leq N-1,
\end{align}
and choose $\{\psi_j(t)=\frac{1}{\sqrt 2}P_j(t)\}_{j=0}^{N-1}$ as the basis polynomials for  the test function space.
One verifies from \eqref{eq:PnOrtho} and  \eqref{devfun}-\eqref{eq:PnBV} that  $(\phi_k',\psi_j)=\delta_{jk}$ and
\begin{equation}\label{diagmatrix}
\xoverline{\bs M}_{jk} =(\phi_k,\psi_j)=
  \begin{dcases}
    1, & k=j=0, \\
    -\frac{1}{2j+1}, & k = j+1, \\
    \frac{1}{2j+1}, & k = j-1,\\
    0, & {\rm otherwise}.
  \end{dcases}
\end{equation}
Then the  system corresponding to \eqref{decaseAB0} reads
\begin{equation}\label{odesystem10}
 \big(\bs I_N  - \sigma \xoverline{\bs M} \big) \bar{\bs v} = \sqrt{2} \sigma u_0 \bs e_1,
\end{equation}
where
\begin{equation*}\label{notationB}
v_N(t)  =\sum_{k=0}^{N-1} \bar v_k \phi_k(t), \quad \bar{\bs v} =(\bar v_0,\cdots, \bar v_{N-1})^\top.
\end{equation*}
\begin{lemma}\label{lem:RelsMC}
  The spectral differentiation matrix  $\bs D$ in \eqref{odesystem0} is similar to the inverse $\xoverline{\bs M}^{-1},$ i.e.,
  \begin{equation*}\label{DsDs}
  \bs D= \bs \Phi\,\xoverline {\bs M}^{-1}\,\bs \Phi^{-1},
  \end{equation*}
   where  $\bs \Phi\in {\mathbb R}^{N\times N}$ has the entries $\{\bs \Phi_{ik}=\phi_k(t_i)\}_{1\le i\le N}^{0\le k\le N-1}$ and
  the basis $\{\phi_k\}$ is given in \eqref{eq:Petbas}.
\end{lemma}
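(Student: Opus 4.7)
The plan is to establish the similarity $\bs D=\bs \Phi\,\xoverline{\bs M}^{-1}\bs \Phi^{-1}$ by introducing the auxiliary matrix $\bs \Phi'\in\mathbb{R}^{N\times N}$ with entries $\bs \Phi'_{ik}:=\phi_k'(t_i)$ for $1\le i\le N,\,0\le k\le N-1$, and to prove the two factorizations
\[
(\mathrm{a})\;\; \bs D\,\bs \Phi=\bs \Phi', \qquad (\mathrm{b})\;\; \bs \Phi'\,\xoverline{\bs M}=\bs \Phi.
\]
Once both are established, combining them immediately yields $\bs D=\bs \Phi'\bs \Phi^{-1}=\bs \Phi\,\xoverline{\bs M}^{-1}\bs \Phi^{-1}$, provided that $\bs \Phi$ is invertible.

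The invertibility of $\bs \Phi$ is settled first. Any $v_N\in{_0\mathbb{P}_N}$ that vanishes at $t_1,\dots,t_N$ has $N+1$ zeros on $[-1,1]$ when combined with $v_N(-1)=0$, so $v_N\equiv 0$; hence the modal-to-nodal map is injective and $\bs \Phi$ is nonsingular. For (a), the condition $\phi_k\in{_0\mathbb{P}_N}$ makes the Lagrange expansion collapse to $\phi_k(t)=\sum_{j=1}^{N}\phi_k(t_j)\,h_j(t)$, since the $j=0$ term drops out. Differentiating and evaluating at $t_i$ then gives $\phi_k'(t_i)=\sum_{j=1}^{N}h_j'(t_i)\phi_k(t_j)=(\bs D\bs \Phi)_{ik}$, which is precisely (a).

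For (b), the key preliminary observation is that the biorthogonality $(\phi_k',\psi_j)=\delta_{jk}$ together with the orthogonality of $\{\psi_j\}$ forces $\phi_k'=(2k+1)\psi_k=\tfrac{2k+1}{\sqrt 2}P_k$, so $\bs \Phi'_{ik}=\tfrac{2k+1}{\sqrt 2}P_k(t_i)$. Using the explicit nonzero entries of $\xoverline{\bs M}$ in \eqref{diagmatrix}, I would then compute $(\bs \Phi'\xoverline{\bs M})_{\cdot,k}$ column by column. For $k=0$, the two nonzero entries $\xoverline{\bs M}_{00}=1$ and $\xoverline{\bs M}_{10}=1/3$ yield $(\bs \Phi'\xoverline{\bs M})_{i,0}=\phi_0'(t_i)+\tfrac{1}{3}\phi_1'(t_i)=\tfrac{1}{\sqrt 2}(1+t_i)=\phi_0(t_i)$. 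For $1\le k\le N-2$, both off-diagonal entries contribute and the factors of $2k\pm 1$ cancel, leaving $(\bs \Phi'\xoverline{\bs M})_{i,k}=\tfrac{1}{\sqrt 2}(P_{k+1}(t_i)-P_{k-1}(t_i))=\phi_k(t_i)$.

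The delicate case is the last column $k=N-1$, which I expect to be the main obstacle, since the structure of $\xoverline{\bs M}$ provides only a single nonzero entry $\xoverline{\bs M}_{N-2,N-1}=-1/(2N-3)$ (the natural ``super-diagonal'' partner would sit at row $j=N$ and is truncated away). The direct computation gives $(\bs \Phi'\xoverline{\bs M})_{i,N-1}=-\tfrac{1}{\sqrt 2}P_{N-2}(t_i)$, whereas $\phi_{N-1}(t_i)=\tfrac{1}{\sqrt 2}(P_N(t_i)-P_{N-2}(t_i))$; these agree precisely because $P_N(t_i)=0$ at the Legendre--Gauss nodes. This boundary closure is exactly the point at which the choice of collocation abscissae as zeros of $P_N$ becomes indispensable to the Petrov--Galerkin reformulation; once it is in place, (b) follows for all $k$, and the similarity $\bs D=\bs \Phi\,\xoverline{\bs M}^{-1}\bs \Phi^{-1}$ is established.
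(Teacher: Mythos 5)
Your proof is correct, and while its first half coincides with the paper's, the second half takes a genuinely more elementary route. Part (a), $\bs D\bs\Phi=\bs\Phi'$, is argued exactly as in the paper (exact nodal differentiation of elements of ${}_0\mathbb{P}_N$). For the remaining step, the paper instead introduces the dual nodal matrix $\bs\Psi$ with entries $\bs\Psi_{jk}=\psi_j(t_k)\omega_k$ and invokes the exactness of the $N$-point Legendre--Gauss quadrature for polynomials of degree $\le 2N-1$ to obtain $\bs\Psi\bs\Phi=\xoverline{\bs M}$ and $\bs\Psi\bs\Phi'=\bs I_N$, from which $\bs D=\bs\Phi'\bs\Phi^{-1}=\bs\Psi^{-1}\bs\Phi^{-1}=\bs\Phi\,\xoverline{\bs M}^{-1}\bs\Phi^{-1}$. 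Your identity (b), $\bs\Phi'\,\xoverline{\bs M}=\bs\Phi$, is precisely what results from eliminating $\bs\Psi$ between those two relations, but you establish it by direct columnwise computation from the explicit entries \eqref{diagmatrix} together with $\phi_k'=\tfrac{2k+1}{\sqrt 2}P_k$; I checked the three cases $k=0$, $1\le k\le N-2$ and $k=N-1$, and they are all right, including the closure of the last column via $P_N(t_i)=0$. The paper's argument is shorter and hides the role of the abscissae inside the quadrature exactness, whereas yours makes that role completely explicit (it surfaces only in the truncated last column) and, as a bonus, supplies a clean justification of the invertibility of $\bs\Phi$, which the paper uses without comment. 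The one cosmetic point worth noting is that your identity $\phi_k'=(2k+1)\psi_k$ is most directly read off from the derivative relation for Legendre polynomials (the corrected form of \eqref{devfun}, namely $(2n+1)P_n=P_{n+1}'-P_{n-1}'$) rather than from biorthogonality, though your biorthogonality argument is also valid once one inserts $\|\psi_j\|^2=1/(2j+1)$.
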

\begin{proof} We first show that
\begin{equation}\label{dphicase}
\bs D \bs \Phi=  \bs \Phi',\quad \bs \Phi'_{ik}= \phi_k'(t_i).
\end{equation}
For any $p\in {}_0{\mathbb P_N},$ we have the exact differentiation
$$
p'(t)=\sum_{j=1}^N p(t_j) h_j'(t), \;\;\; {\rm so} \;\;\; \bs p'=\bs D \bs p,
$$
where $\bs p=(p(t_1),\cdots, p(t_N))^\top$ and $\bs p'=(p'(t_1),\cdots, p'(t_N))^\top.$ Taking $p$ to be columns of $\bs \Phi,$ yields
$$
 \bs \Phi'=(\bs \phi_0',\cdots, \bs \phi_{N-1}')=\bs D (\bs \phi_0,\cdots, \bs \phi_{N-1})= \bs D \bs \Phi.
$$
We now introduce the matrix related to the dual basis functions: $\bs \Psi \in \mathbb R^{N \times N}$ with $\bs \Psi_{jk} = \psi_{j}(t_k) \omega_k$.
One verified readily from \eqref{decaseA} that
$$(\bs \Psi \bs \Phi)_{jk} = \sum_{l=0}^{N-1} \bs \Psi_{jl} \bs \Phi_{lk}  = \sum_{l=0}^{N-1} \psi_{j}(t_l)
\phi_k(t_l) \omega_l =\langle \phi_k,\psi_j   \rangle_N=(\phi_k,\psi_j)
=\xoverline {\bs M}_{jk},$$
and
 $$(\bs \Psi \bs \Phi')_{jk} =\langle \phi_k',\psi_j   \rangle_N=(\phi_k',\psi_j)=\delta_{jk},$$
so we have
\begin{equation}\label{PsiPhiA}
\bs \Psi \bs \Phi=\xoverline {\bs M},  \quad \bs \Psi \bs \Phi' = \bs I_N.
\end{equation}
Then  from \eqref{dphicase}-\eqref{PsiPhiA}, we obtain immediately that
\begin{equation*}
  \bs D = \bs \Phi'\, \bs \Phi^{-1} = \bs \Psi^{-1}\, \bs \Phi^{-1} = \bs \Phi\, \xoverline {\bs M}^{-1}\, \bs \Phi^{-1}.
\end{equation*}
This ends the proof.
\end{proof}

Remarkably, we can show that the eigenvalues of  the matrix $\xoverline{\bs M}$ are zeros of the classical BP, which together with
Lemma \ref{lem:RelsMC} allows  us to characterise the distribution of eigenvalues of $\bs D$.
\begin{thm}\label{thm:eigMp} Let $\{\bar \lambda_j:=\bar \lambda_{N,j}\}_{j=1}^N$ be the eigenvalues of the matrix $\xoverline{\bs M}$ with $N\ge 2.$ 
\begin{itemize}
\item [(i)] The eigenvalues $\{\bar \lambda_{j}=-z_{j}\}_{j=1}^N$ with $\big\{z_{j}:=z_{N,j}\big\}$ being zeros of the Bessel polynomial $B_N(x)$ defined in \eqref{3termBPdefn}, so the eigenvalues are all simple and  conjugate of each other. For odd $N,$ its unique real eigenvalue behaves like $-Z_{N,2}$ in \eqref{realZnalpha}.
\medskip
\item [(ii)] All the eigenvalues are in the open right half-plane and located in crescent-shaped region:
    \begin{equation*}\label{annula}
     \bar \lambda_j \in  \mathcal{R}_{N}^+ := \left\{ z = \rho \re^{\ri \theta} \in \mathbb{C}: \frac{1}{N +  2/ 3} < \rho \le \frac{1+\cos\theta}{N+1},\;\; |\theta| < \pi - \Theta_{N, 2} \right\},
    \end{equation*}
    for $1 \le j \le N$, where $\Theta_{N, 2}$ is given in \eqref{thetanalpha} and the equal sign can be only attainable at $\theta = 0.$
\end{itemize}
The corresponding eigenvector are
\begin{equation}\label{bjeigen}
\bs {\bar v}_j = \frac{\bs {\bar b}_j}{|\bs {\bar b}_j|},\quad
\bs {\bar b}_j=\big(B_0(-\bar\lambda_j), B_1(-\bar\lambda_j), \cdots, B_{N-1}(-\bar\lambda_j)\big)^\top,\quad 1\le j\le N.
\end{equation}
\end{thm}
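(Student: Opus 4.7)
The plan is to mirror the argument used for Theorem \ref{thm:eigMd}, exploiting the fact that $-\overline{\bs M}$ has precisely the structure of a (slightly modified) Jacobi matrix associated with the classical Bessel polynomials. First, I would rewrite the three-term recurrence \eqref{3termBPdefn} in the form
$$
-z B_n(z) = \frac{1}{2n+1} B_{n-1}(z) - \frac{1}{2n+1} B_{n+1}(z), \quad n \ge 1,
$$
and handle the boundary case $n = 0$ separately, using the initial data $B_0 = 1$ and $B_1 = 1 + z$ to get
$$
-z B_0(z) = B_0(z) - B_1(z).
$$
Comparing these identities row by row with the entries in \eqref{diagmatrix}, the boundary entry $\overline{\bs M}_{00} = 1$ corresponds to the second identity, while the tri-diagonal entries $\overline{\bs M}_{j,j\pm 1} = \mp 1/(2j+1)$ correspond to the first.

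Assembling these into matrix form with $\bs{\bar b}(z) := (B_0(z), B_1(z), \ldots, B_{N-1}(z))^\top$ yields, in direct analogy with \eqref{notationBe},
$$
\overline{\bs M}\, \bs{\bar b}(z) = -z\, \bs{\bar b}(z) + \frac{1}{2N-1} B_N(z)\, \bs e_N,
$$
the residual at the right end arising because the recurrence at $n = N-1$ couples to the truncated $B_N(z)$. Substituting $z = z_j$, any zero of $B_N$, kills the residual and certifies that $\bs{\bar b}(z_j)$ is an eigenvector of $\overline{\bs M}$ with eigenvalue $\bar\lambda_j = -z_j$. Theorem \ref{lem:Bnzero}(i) (applied with $\alpha = 2$) guarantees that the $N$ zeros of $B_N$ are simple and come in complex-conjugate pairs, so this procedure exhausts the spectrum; normalising the eigenvectors produces \eqref{bjeigen}, and the asymptotic formula for the unique real eigenvalue at odd $N$ is Remark \ref{nmBnalpha}(b) applied to $-Z_{N,2}$.

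Part (ii) is then a direct translation of Theorem \ref{lem:Bnzero}(ii)--(iii) under $z \mapsto -z$: reflecting the crescent $\mathcal R_{N,2}^-$ about the origin yields $\mathcal R_N^+$, with $\Theta_{N,\alpha}$ specialising to $\Theta_{N,2}$, and the left-half-plane confinement of the zeros becomes the right-half-plane confinement of the eigenvalues. Combined with Lemma \ref{lem:RelsMC}, the identification $\bar\lambda_j = -z_j$ will simultaneously furnish the eigenvalues of the collocation differentiation matrix $\bs D$ as $1/\bar\lambda_j = -1/z_j$, which answers the open question of Trefethen--Trummer on identifying the spectrum of $\bs D$.

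The main technical point I anticipate as non-routine is the treatment of the boundary row $j = 0$: the identity $-zB_0 = B_0 - B_1$ does not descend from the three-term recurrence itself but must be extracted from the normalisation $B_0(z) \equiv 1$ coupled with the explicit formula $B_1(z) = 1 + z$. This is the structural reason why $\overline{\bs M}$ is not exactly a Jacobi matrix (it carries a nonzero $(0,0)$ entry), and it must be checked by hand before the interior recurrence argument is invoked. Once this verification is in place, the matching of entries and the appeal to Theorem \ref{lem:Bnzero} are mechanical.
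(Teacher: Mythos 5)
Your proposal is correct and follows essentially the same route as the paper: rewrite the three-term recurrence \eqref{3termBPdefn} as $-zB_j = \frac{1}{2j+1}B_{j-1}-\frac{1}{2j+1}B_{j+1}$ with the boundary identity $-zB_0=B_0-B_1$, assemble it into $\xoverline{\bs M}\,\bs{\bar b}(z)=-z\,\bs{\bar b}(z)+\frac{1}{2N-1}B_N(z)\bs e_N$, set $z=z_j$ to kill the residual, and invoke Theorem \ref{lem:Bnzero} and Remark \ref{nmBnalpha} with $\alpha=2$ for the zero distribution. The sign conventions, the residual coefficient, and the handling of the $(0,0)$ entry all match the paper's argument.
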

\begin{proof}   We rewrite  the three-term recurrence relation
 \eqref{eq:recur0} as
  \begin{equation}\label{3termBP20}
    \begin{dcases}
          -zB_0(z) = B_0(z) - B_1(z),\\
      -zB_j(z) = \frac{1}{2j+1} B_{j-1}(z) - \frac{1}{2j+1} B_{j+1}(z),\;\;\; 1\le j\le N-1,
    \end{dcases}
  \end{equation}
and denote
\begin{equation*}\label{bmatrix}
 \bar {\bs b}(z):= (B_0(z), B_1(z), \cdots, B_{N-1}(z))^\top,\quad \bs e_N:=(0, \cdots, 0, 1)^\top\in {\mathbb R}^N.
 \end{equation*}
 In view of  \eqref{diagmatrix},   we can reformulate \eqref{3termBP20} as  the matrix form
  \begin{equation}\label{notationBe2}
    -z \bs {\bar b}(z) = \xoverline{\bs M} \bs {\bar b}(z) - \frac{1}{2N-1} B_{N}^{(2)}(z) \bs e_N.
  \end{equation}
  Taking $z = z_j = -\bar\lambda_j$ in \eqref{notationBe2}, we obtain immediately that $\bar\lambda_j \bs v_j = \xoverline{\bs M} \bs v_j$ for $1 \le j \le N$ with  the corresponding eigenvectors $\{\bs {\bar b}_j:=\bs {\bar b}(-\bar\lambda_j)\}$ given in \eqref{bjeigen}.
  With this, we claim these properties from  Theorem \ref{lem:Bnzero} and Remark \ref{nmBnalpha} with $\alpha = 2$.
\end{proof}

In Figure \ref{fig:lamjMD51A} (left), we plot the  region $\mathcal{R}_{N}^+$ with $N=51$ confined by the cardioid curve:
 \begin{equation*}\label{eq:cardBnaneg}
    \mathcal{C}_{N}^+= \Big\{ z = \rho \re^{\ri\theta} \in \mathbb{C}: \rho = \frac{1+\cos\theta}{N+1},\;\;  -\pi <  \theta \le\pi \Big\},
  \end{equation*}
  and the semi-circle: $|z|={1}/(N +  2/ 3)$ with $\theta\in [-\pi/2,\pi/2].$ Like Figure \ref{fig:lamj51}, we also illustrate the distributions of the eigenvalues for various $N,$ and examine the behaviour of the eigenvalues with the smallest and largest magnitudes, from which we observe similar asymptotic properties.
\begin{figure}[htp]
  \centering
  \includegraphics[width=.34\textwidth]{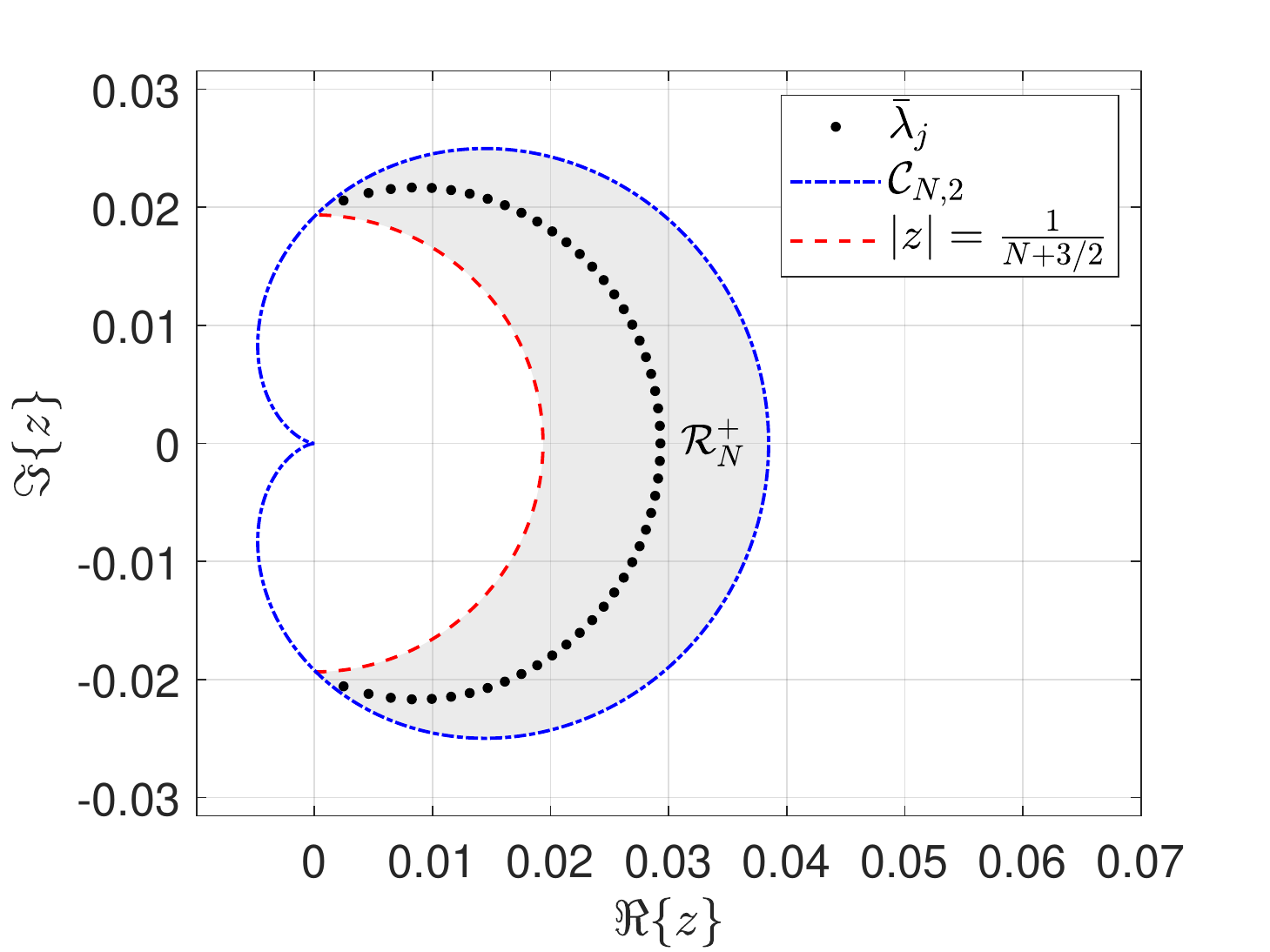}\hspace*{-10pt}
  \includegraphics[width=.34\textwidth]{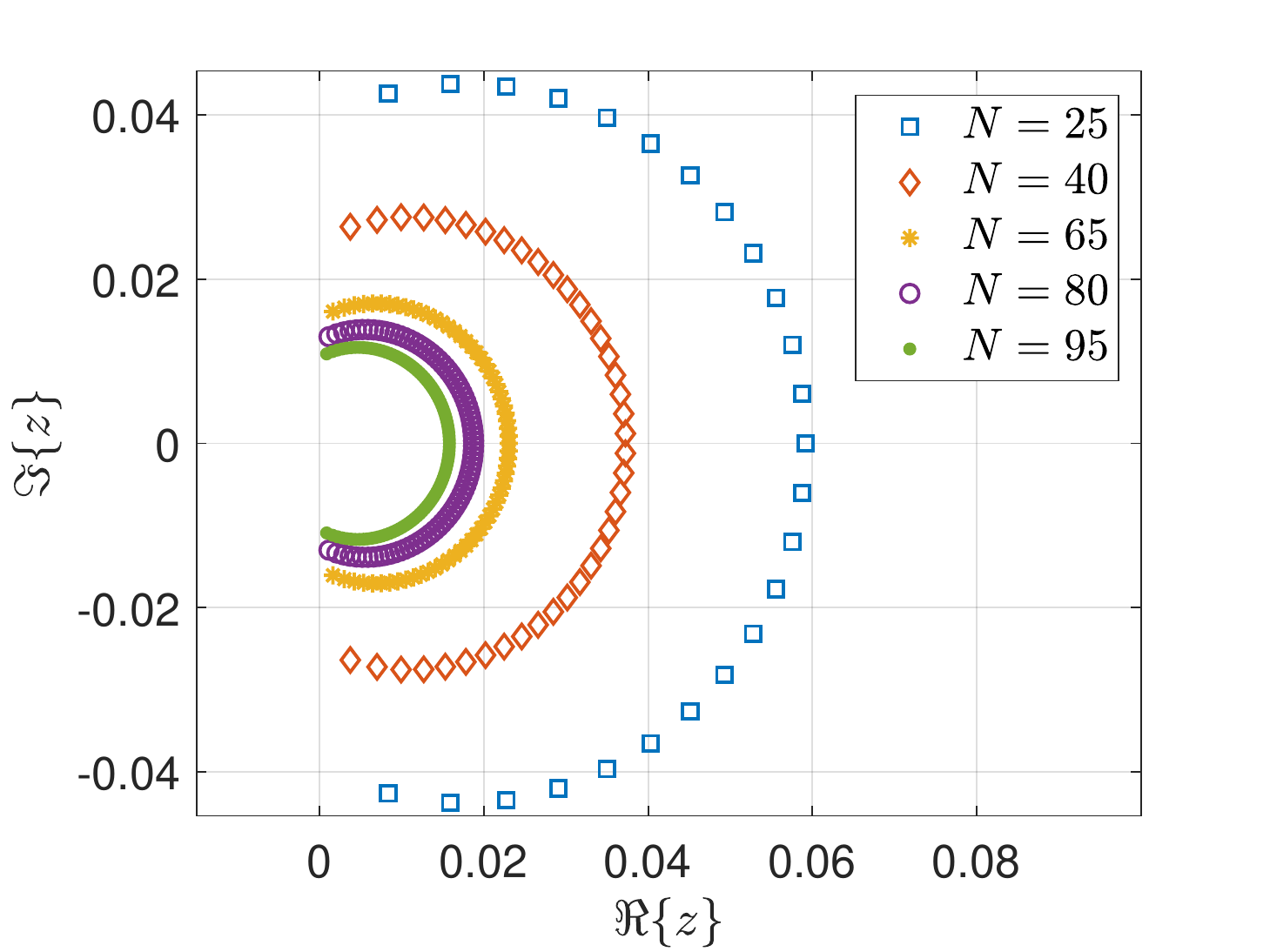} \hspace*{-10pt}
  \includegraphics[width=.34\textwidth]{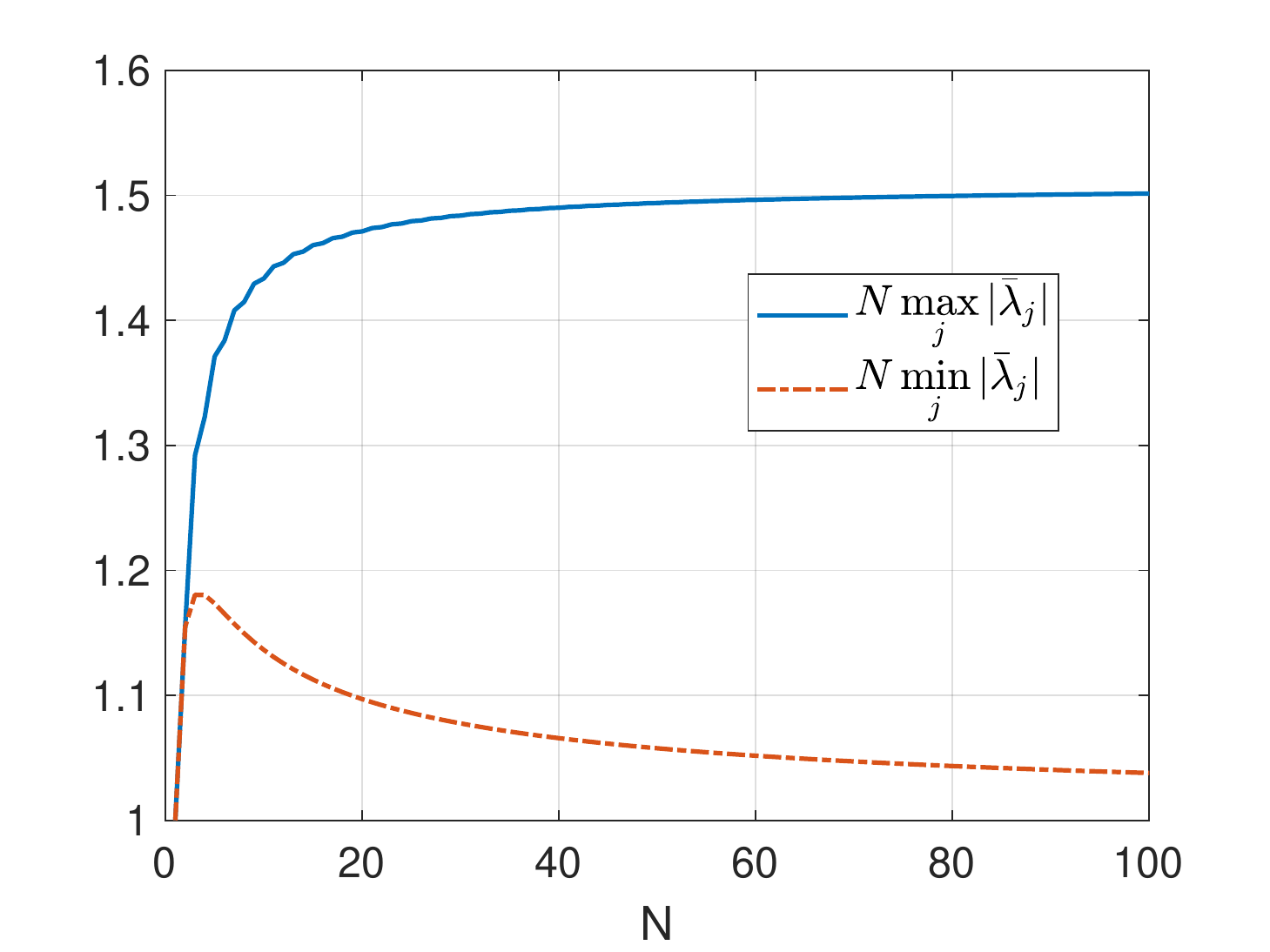}
  \vspace*{-10pt}
  \caption{\small Eigenvalues of $\xoverline{\bs M}$ in \eqref{diagmatrix}. Left:  Distribution of $\bar \lambda_j=\bar \lambda_{N,j}$ with $N=51$ in the crescent-shaped region.
  Middle:  Distributions of the eigenvalues for different $N$. Right: Behaviour of the eigenvalues with the smallest and largest magnitudes for $1\le N\le 100$.}
  \label{fig:lamjMD51A}
\end{figure}

As a direct consequence of Lemma  \ref{lem:RelsMC} and Theorem \ref{thm:eigMp}, we have the following important result on the  eigenvalues and eigenvectors of $\bs D.$
\begin{cor}\label{Deigen}  The eigenvalues $\{\lambda_j^D\}_{j=1}^N$ of $\bs D$ in \eqref{odesystem0}-\eqref{Colentries} are reciprocal of the eigenvalues $\{\bar \lambda_j\}_{j=1}^N$ of $\xoverline{\bs M}$ in \eqref{diagmatrix}-\eqref{odesystem10}, so they are distinct, and conjugate of each other, except for  one real eigenvalue when $N$ is odd.  Moreover, all the eigenvalues satisfy  $\Re\{\lambda_j^D\}>0,$ and
lie in the region
\begin{equation}\label{annulaD}
  \widehat{{\mathcal R}}_N^+:= \Big\{z = \rho e^{\ri \theta} \in \mathbb{C} : \frac{N+1}{1+\cos\theta} \le \rho < N+\frac{2}{3},\,\, |\theta| < \pi - \Theta_{N, 2} \Big\},\quad 1\le j\le N.
\end{equation}
The corresponding eigenvector of $\lambda_j^D$ is
\begin{equation*}\label{pjeigen}
\bs v_j^D = \frac{\bs b_j^D}{|\bs b_j^D|}, \quad \bs b_j^D= \bs\Phi \bs {\bar b}_j,\quad 1\le j\le N,
\end{equation*}
where the matrix $\bs \Phi$ is given in Lemma \ref{lem:RelsMC} and $\{\bs {\bar b}_j\}$ is given by \eqref{bjeigen}.
\end{cor}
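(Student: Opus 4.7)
The plan is to read off this corollary as an essentially immediate consequence of the similarity
$\bs D=\bs\Phi\,\xoverline{\bs M}^{-1}\,\bs\Phi^{-1}$ established in Lemma \ref{lem:RelsMC}, combined with the spectral information about $\xoverline{\bs M}$ supplied by Theorem \ref{thm:eigMp}. By Theorem \ref{thm:eigMp}(ii) every eigenvalue $\bar\lambda_j$ lies in the open right half-plane, so in particular $0\notin\mathcal R_N^+$, $\xoverline{\bs M}$ is invertible, and $\bs D$ is similar to $\xoverline{\bs M}^{-1}$. Hence $\bs D$ and $\xoverline{\bs M}^{-1}$ share the same spectrum with the same multiplicities, and the eigenvalues of $\xoverline{\bs M}^{-1}$ are the reciprocals $\{1/\bar\lambda_j\}_{j=1}^N$. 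This yields $\lambda_j^D=1/\bar\lambda_j$; simplicity is preserved by the reciprocation map, and since $z\mapsto 1/z$ commutes with complex conjugation, the family $\{\lambda_j^D\}$ is again conjugation-invariant, with exactly one real element when $N$ is odd (coming from the unique real eigenvalue of $\xoverline{\bs M}$ guaranteed in that case).

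To localise $\lambda_j^D$ inside $\widehat{\mathcal R}_N^+$, I would write $\bar\lambda_j=\rho\,\re^{\ri\theta}$ and invoke the bounds of Theorem \ref{thm:eigMp}(ii), namely
\begin{equation*}
\frac{1}{N+2/3}<\rho\le \frac{1+\cos\theta}{N+1},\qquad |\theta|<\pi-\Theta_{N,2}.
\end{equation*}
Inverting the modulus and using $\cos(-\theta)=\cos\theta$ transforms these into $\frac{N+1}{1+\cos\theta}\le 1/\rho<N+2/3$ with the angular condition unchanged, which is precisely the description of $\widehat{\mathcal R}_N^+$ in \eqref{annulaD}. The assertion $\Re\{\lambda_j^D\}>0$ then follows from the identity $\Re(1/z)=\Re(z)/|z|^2$ together with $\Re(\bar\lambda_j)>0$.

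Finally, for the eigenvectors I would multiply the relation $\xoverline{\bs M}^{-1}\bs{\bar v}_j=\lambda_j^D\bs{\bar v}_j$ by $\bs\Phi$ on the left and use $\bs D\bs\Phi=\bs\Phi\,\xoverline{\bs M}^{-1}$ (an immediate rearrangement of the similarity) to obtain $\bs D(\bs\Phi\bs{\bar v}_j)=\lambda_j^D(\bs\Phi\bs{\bar v}_j)$. Substituting the explicit form $\bs{\bar v}_j=\bs{\bar b}_j/|\bs{\bar b}_j|$ from \eqref{bjeigen} and renormalising yields the claimed formula $\bs v_j^D=\bs b_j^D/|\bs b_j^D|$ with $\bs b_j^D=\bs\Phi\bs{\bar b}_j$. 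There is no genuine obstacle here: the entire statement reduces to bookkeeping about the map $z\mapsto 1/z$ applied to the conclusions of Lemma \ref{lem:RelsMC} and Theorem \ref{thm:eigMp}, with the only point worth an explicit remark being the invertibility of $\xoverline{\bs M}$, itself automatic from $0\notin\mathcal R_N^+$.
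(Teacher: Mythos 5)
Your proposal is correct and follows exactly the route the paper intends: the corollary is stated there as a direct consequence of Lemma \ref{lem:RelsMC} and Theorem \ref{thm:eigMp}, i.e.\ transporting the spectrum of $\xoverline{\bs M}$ through the similarity $\bs D=\bs\Phi\,\xoverline{\bs M}^{-1}\bs\Phi^{-1}$ and applying $z\mapsto 1/z$ to the crescent region and $\bs\Phi$ to the eigenvectors. Your bookkeeping of the inverted modulus bounds, the conjugation symmetry, and the sign of $\Re(1/z)$ all checks out.
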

\begin{figure}[t]
  \centering
  \includegraphics[width=.34\textwidth]{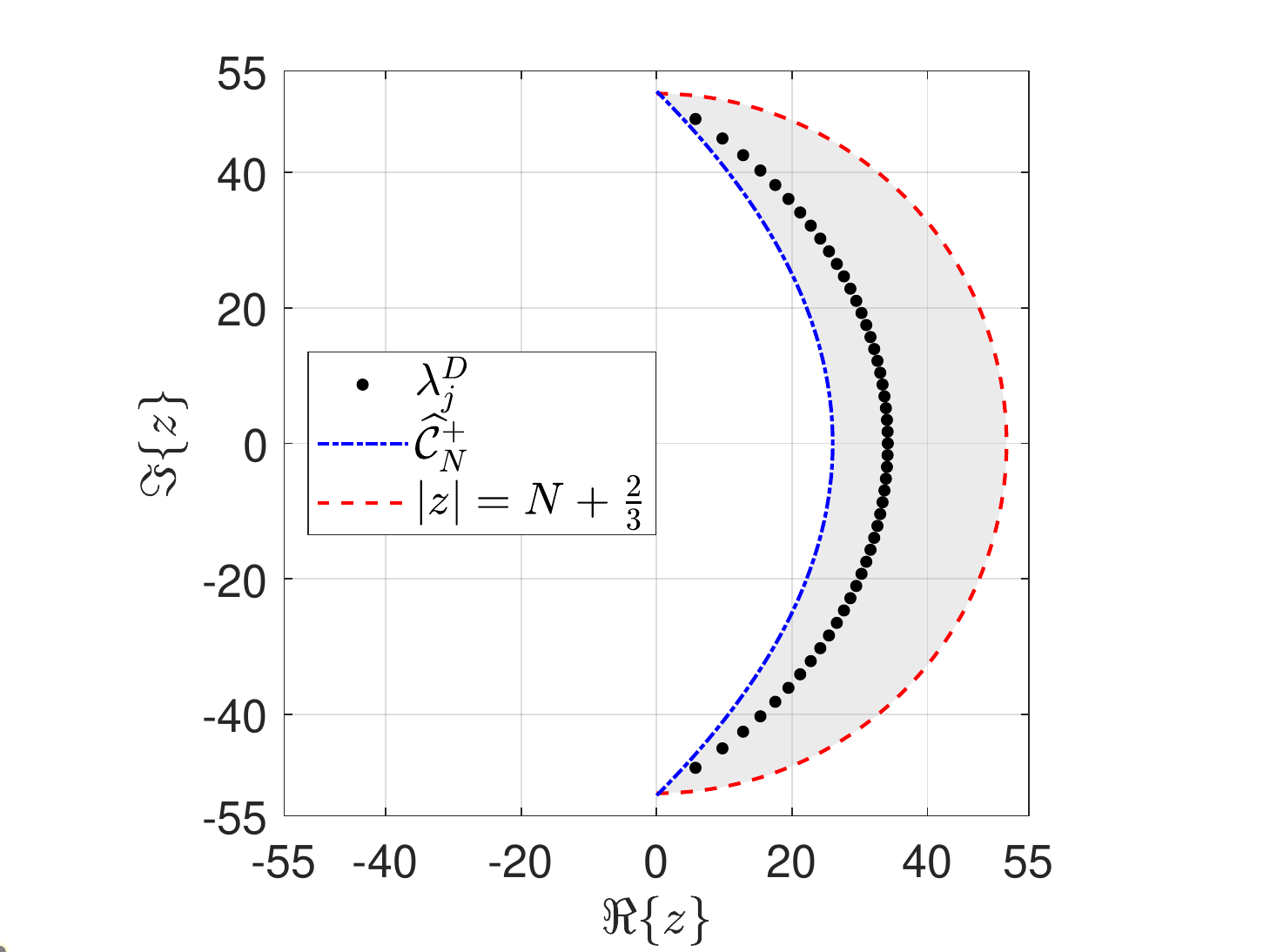}\hspace*{-15pt}
  \includegraphics[width=.34\textwidth]{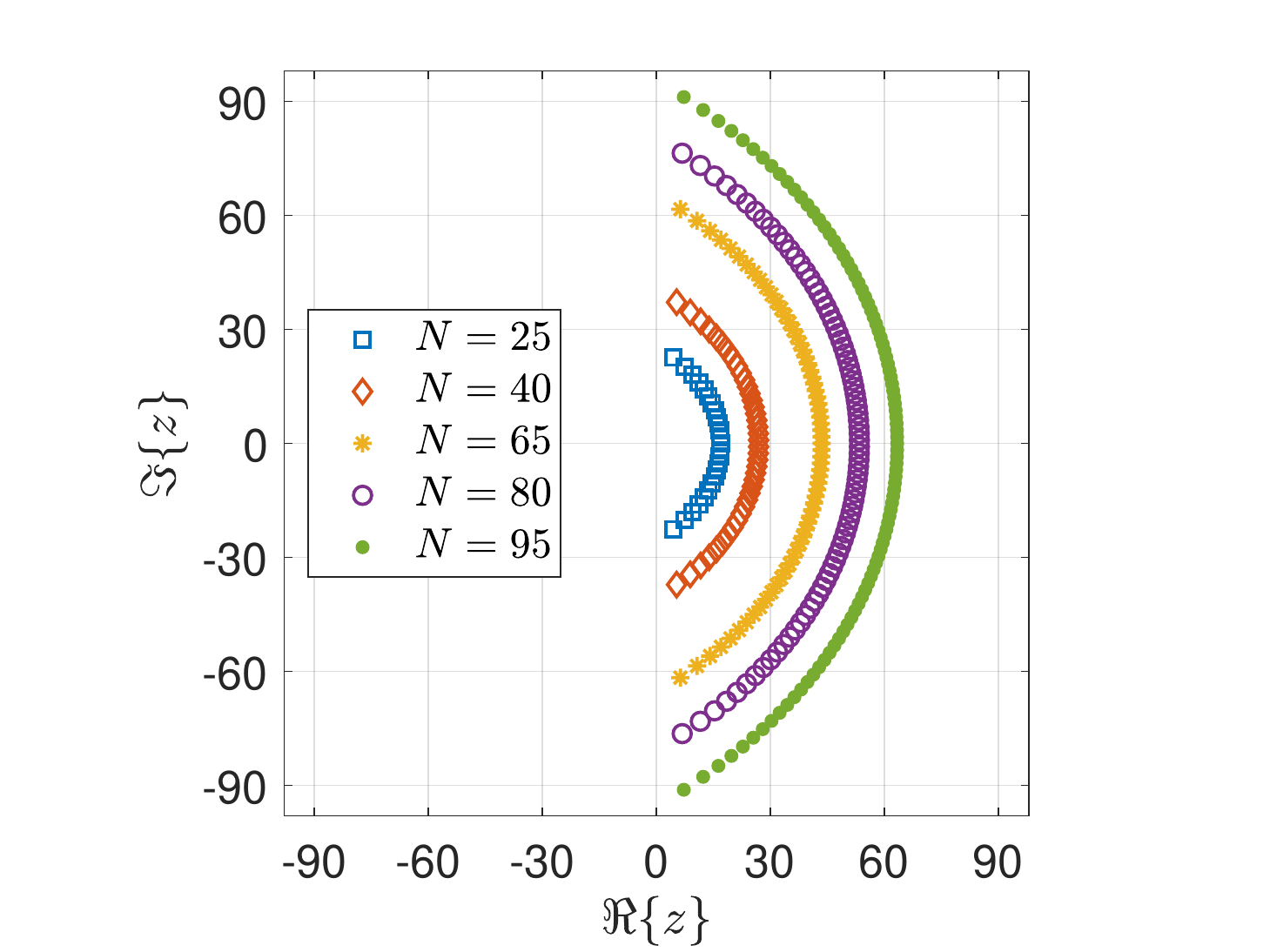} \hspace*{-15pt}
  \includegraphics[width=.34\textwidth]{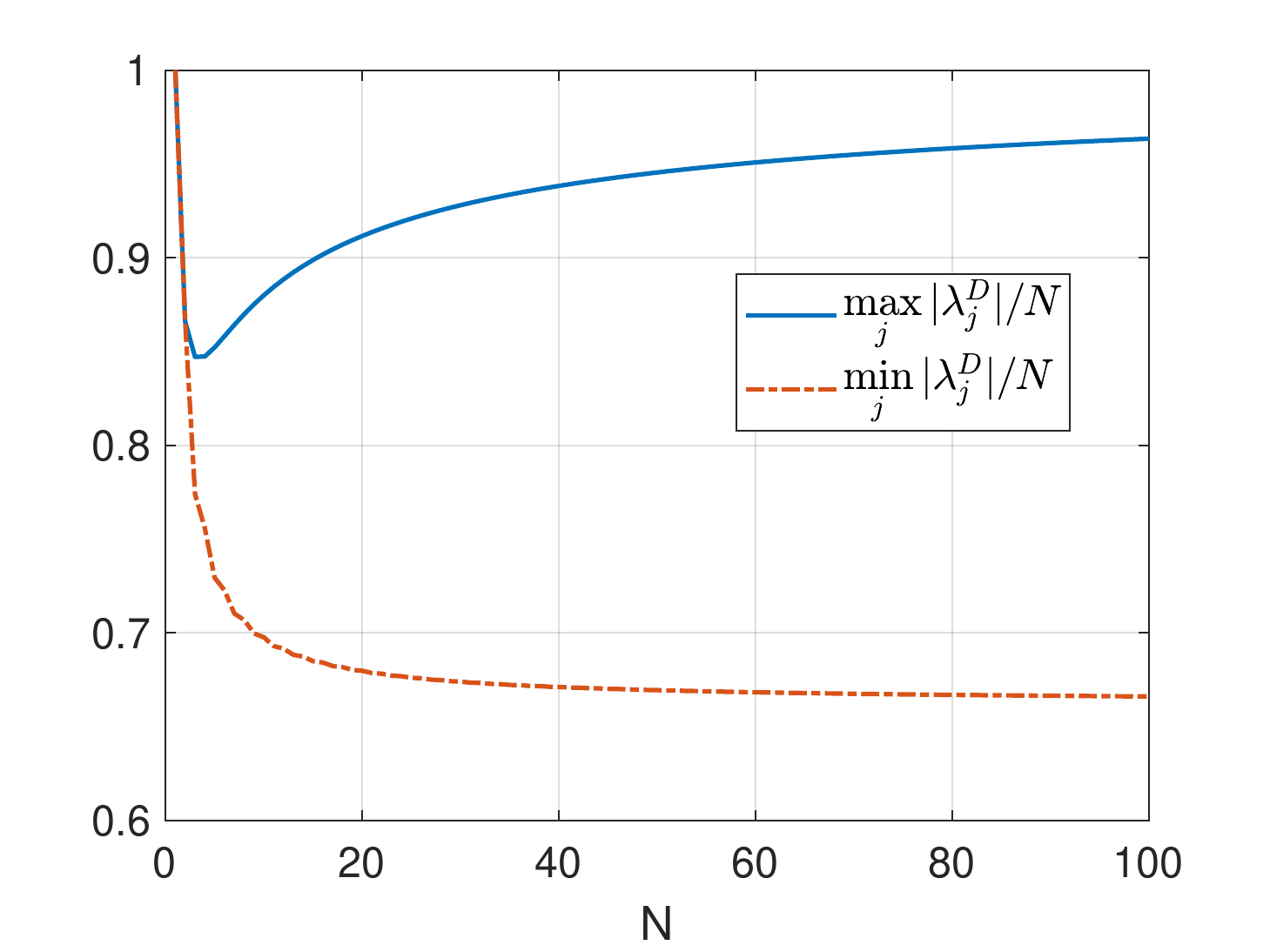}
  \caption{\small Eigenvalues of $\bs D$  in \eqref{odesystem0} with the same setting as Figure  \ref{fig:lamjMD51A}. Here $\widehat{\mathcal C}_N^+$ denotes the left boundary curve in \eqref{annulaD}.}
  \label{fig:lamjMD51}
\end{figure}

\begin{rem}\label{relaRmk}\emph{The Bessel polynomial directly  bears on the modified  Bessel function {\rm (cf. \cite{Grosswald1978}):}
\begin{equation*}
  B_N(z) = \sqrt{\frac{2}{\pi z}} \re^{1/z} K_{N+\frac12}(1/z),
\end{equation*}
which agrees with the identification  in  \cite{WangWaleffe2014a}. However, we can also identify the eigenvectors through our approach.}
\end{rem}

We illustrate in Figure \ref{fig:lamjMD51} the eigenvalues of $\bs D$ in the same setting as in Figure  \ref{fig:lamjMD51A}.
It is seen from \eqref{annulaD} that  $|\lambda_j^D|=O(N).$ Indeed, by Remark \ref{nmBnalpha},  the unique real eigenvalue for odd $N$ behaves like
\begin{equation*}\label{realeignD}
 \lambda_{(N+1)/2}^{D}=\frac  N \nu + O(1),\;\;\; \nu\approx 1.50888.
\end{equation*}

\section{Eigenvalue analysis of  LDPG methods for second-order IVPs}\label{sect4:2nd}
\setcounter{equation}{0}
\setcounter{lmm}{0}
\setcounter{thm}{0}

In this section, we extend the eigenvalue analysis to the second-order IVP:
\begin{equation}\label{hyperModel2}
  u''(t) = \sigma u(t),\;\;\; t \in I; \quad  u(-1) = u_0,\quad  u'(-1) = u_1,\quad
\end{equation}
where the constants $\sigma \neq 0,$ and  $u_0, u_1$ are given. 
Introduce the dual approximation spaces
\begin{equation*}\label{eq:Space2nd}
V_N= \big\{ \phi \in \mathbb P_{N+1} : \phi(-1) = \phi'(-1) = 0 \big\}, \quad
V_N^* =\big \{ \psi \in \mathbb P_{N+1} : \psi(1) = \psi'(1) = 0\big \}.
\end{equation*}
Here, we set the highest degree to be $N+1$ so that  ${\rm dim}(V_N)={\rm dim}(V_N^*)=N.$
Let  $\{t_i, \omega_i\}_{i=0}^{N+1}$ be the  Legendre-Gauss-Lobatto  quadrature nodes and weights with the discrete inner product and the exactness of quadrature
\begin{equation}\label{exactA}
 \langle f, g \rangle_N := \sum_{i=0}^{N+1} f(t_i)g(t_i) \omega_i; \quad \langle f, g \rangle_N=(f,g),\quad \forall\, f\cdot g\in {\mathbb P}_{2N+1}.
\end{equation}
The Legendre pseudospectral dual-Petrov-Galerkin scheme for \eqref{hyperModel2}  is to find
 $u_N = u_0 + (1+t)u_1 + v_N \in \mathbb P_{N+1}$ with $v_N \in V_N$ such that
\begin{equation}\label{eq:dPGalM2nd2}
  \langle v_N', \psi' \rangle_N  + \sigma \langle v_N, \psi \rangle_N = -\sigma \langle u_0 + (1+t)u_1, \psi \rangle_N,\quad \forall\, \psi \in V_N^*.
\end{equation}
Choose the basis functions for $V_N$ and $V_N^*$ as
\begin{equation*}\label{eq:wdPetbas}
\begin{split}
  \phi_k(t) &= c_k \big(P_k(t) + a_k P_{k+1}(t) + b_k P_{k+2}(t)\big) \in V_N, \\
  \phi_k^*(t) &= d_k \big(P_k(t) - a_k P_{k+1}(t) + b_k P_{k+2}(t)\big) \in V_N^*,
\end{split}
\end{equation*}
for $0 \le k \le N-1,$ where
$$
a_k = \frac{2k+3}{k+2},\quad b_k = \frac{k+1}{k+2},\quad
c_k = \frac{k+2}{\sqrt{2}},\quad d_k= \frac{1}{\sqrt{2}(k+1)(2k+3)}. 
$$
Write and denote
\begin{equation*}
  v_N(t) = \sum_{k=0}^{N-1} \tilde{v}_k \phi_k(t),\quad
  \tilde{\bs{v}} = (\tilde{v}_0, \cdots, \tilde{v}_{N-1})^\top.
\end{equation*}
The matrix form of \eqref{eq:dPGalM2nd2} reads
\begin{equation}\label{linearsys}
  \big( \bs I_{\!N} - \sigma \bs{{M}}^{(2)} \big) \tilde{\bs{v}} = \sigma \bs g, \;\; {\rm where} \;\;
  \bs g := \frac{2u_0+u_1}{3\sqrt{2}}\bs e_1 + \frac{u_1}{15\sqrt{2}}\bs e_2.
\end{equation}
Thanks to \eqref{exactA}, we verify from the properties   \eqref{eq:PnOrtho}-\eqref{eq:deriv12} readily that
$$  \langle \phi_k'', \phi_j^* \rangle_N = (\phi_k'', \phi_j^*) = \delta_{jk},\quad  0 \le j,k \le N-1,$$
and  the mass matrix $\bs{{M}}^{(2)} \in \mathbb R^{N \times N}$ is a penta-diagonal (non-symmetric) matrix with nonzero entries  given by
\begin{equation}\label{eq:wdmatrix}
\bs M_{jk}^{(2)} = \langle \phi_k, \phi_j^* \rangle_N =
\begin{dcases}
  \frac{2(j-1)}{j(2j+1)} d_j c_{j-2}, & k=j-2,\;\; 2 \le j \le N-1, \\
  \frac{4}{(j+1)(j+2)} d_j c_{j-1}, & k=j-1,\;\; 1 \le j \le N-1, \\
  \Big( \frac{2}{2j+1} - \frac{2(2j+3)}{(j+2)^2} + \Big(\frac{j+1}{j+2} \Big)^2 \frac{2}{2j+5} \Big) d_j c_j, & k = j,\;\; 0 \le j \le N-2, \\
  \frac{-N^3-2N^2+4N+2}{N(N+1)^2(2N-1)(2N+1)}, & k=j=N-1, \\
  -\frac{4}{(j+2)(j+3)} d_j c_{j+1}, & k=j+1,\;\; 0 \le j \le N-2, \\
  \frac{2(j+1)}{(j+2)(2j+5)} d_j c_{j+2}, & k=j+2,\;\; 0 \le j \le N-3.
\end{dcases}
\end{equation}
Note that for $j=k=N-1$, the LGL quadrature is not exact, so we obtain this entry via
\begin{equation}\label{phi2}
\begin{split}
  \langle \phi_{N-1}, & \phi_{N-1}^* \rangle_N
  = c_{N-1} d_{N-1} \big\{ \| P_{N-1}\|^2 -a_{N-1}^2 \| P_N\|^2 + b_{N-1}^2 \langle P_{N+1}, P_{N+1}\rangle_N \big\} \\
  &= \frac{N+1}{N(2N-1)(2N+1)} - \frac{1}{N(N+1)} + \frac{N}{2(N+1)(2N+1)} \langle P_{N+1}, P_{N+1}\rangle_N \\
  &= \frac{-N^3-2N^2+4N+2}{N(N+1)^2(2N-1)(2N+1)},
\end{split}
\end{equation}
where we used the property (see \cite[p.\! 101]{Shen2011Book}):
\begin{equation*}
  \langle P_{N+1}, P_{N+1}\rangle_N = \frac{2}{N+1}.
\end{equation*}
Remarkably,  we can exactly characterise the eigenvalues of $\bs M^{(2)}$ as follows.

\begin{thm}\label{thm:eigMw} There holds the relation
\begin{equation}\label{squareA}
\bs M^{(2)}=\widetilde{\bs M} \times \widetilde{\bs M},
\end{equation}
where $\widetilde{\bs M}$ is the Jacobi matrix of the three-term recurrence relation of the GBPs $\{B_n^{(4)}(z)\},$ i.e.,
\eqref{eq:recur0} with $\alpha = 4, \beta = 2.$ Consequently, the  eigenvalues 
 of  $\bs{{M}}^{(2)}$ are
\begin{equation*}\label{mujform}
\mu_j = \big(z_j^{(4)}\big)^2,\quad 1 \le j \le N,
\end{equation*}
where $\{z_j^{(4)}\}$ are zeros of the GBP $B_{N}^{(4)}(z).$ They are  simple, conjugate of each other and  lie in the region
\begin{equation}\label{eq:RegionMw}
  \mu_j \in \mathcal{R}_{N, 4}^{(2)} := \Big\{ z = \rho \re^{\ri\theta} \in \mathbb{C}: \frac{1}{(N+\frac53)^2} < \rho \le \Big(\frac{1+\cos(\theta/2)}{N+3}\Big)^2,\;\; |\theta| \le 2(\pi-\Theta_{N,4}) \Big\},
  \end{equation}
  for $1 \le j \le N,$ where $\Theta_{N,4}$ is given in \eqref{thetanalpha} and the equal sign can be only attained  at $\theta = 0.$
Furthermore, the corresponding eigenvectors are
\begin{equation}\label{EigvectorW}
  \tilde{\bs v}_j = \frac{\tilde {\bs b}_j}{|\tilde{\bs b}_j|},\quad \tilde{\bs b}_j:= \big(B_0^{(4)}(z_j^{(4)}), B_1^{(4)}(z_j^{(4)}), \cdots, B_{N-1}^{(4)}(z_j^{(4)})\big)^\top,\quad 1 \le j \le N.
\end{equation}
\end{thm}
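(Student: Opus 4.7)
The plan is to reduce everything to the already-proven structure for first-order IVPs by establishing the factorization \eqref{squareA} and then invoking Theorem \ref{lem:Bnzero} via the squaring map $z\mapsto z^2$. Thus the proof has three stages: (a) identify the Jacobi matrix $\widetilde{\bs M}$ of the $B_n^{(4)}$-recurrence; (b) verify $\bs M^{(2)}=\widetilde{\bs M}^2$ entrywise; (c) transfer the spectral data from $\widetilde{\bs M}$ to $\bs M^{(2)}$ and translate the crescent region of Theorem \ref{lem:Bnzero} through squaring.

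First I would write $\widetilde{\bs M}$ explicitly from \eqref{eq:recur0}--\eqref{coefabc} with $\alpha=4$, $\beta=2$, exactly as was done in \eqref{3termBP20-2}--\eqref{notationBe} for $\alpha=3$. This gives a tridiagonal matrix whose entries are simple rational functions of $j$, and the recurrence identity reads $-z\,\tilde{\bs b}(z)=\widetilde{\bs M}\tilde{\bs b}(z)+\text{(boundary term involving }B_N^{(4)}(z)\text{)}$. In particular, at every zero $z=z_j^{(4)}$ of $B_N^{(4)}$ one has $\widetilde{\bs M}\tilde{\bs b}_j=-z_j^{(4)}\tilde{\bs b}_j$, the exact analogue of the argument used for Theorem \ref{thm:eigMd}.

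Next, I would compute the pentadiagonal product $\widetilde{\bs M}\cdot\widetilde{\bs M}$ and match it, band by band, against the six cases in \eqref{eq:wdmatrix}. The $(j,j\pm 2)$-entries are single products of two off-diagonal entries of $\widetilde{\bs M}$; the $(j,j\pm 1)$-entries and the interior diagonal entries are two-term sums; each identity reduces to a rational-function simplification in $j$ using the explicit $a_n^{(4)}, b_n^{(4)}, c_n^{(4)}$. The one entry that needs extra care is $\bs M^{(2)}_{N-1,N-1}$, which arose from the non-exactness of the LGL quadrature via \eqref{phi2}; since $\widetilde{\bs M}$ has no row or column $N$, the product $(\widetilde{\bs M}^2)_{N-1,N-1}$ is automatically truncated, and I would check that this truncation exactly reproduces the corrected value $(-N^3-2N^2+4N+2)/(N(N+1)^2(2N-1)(2N+1))$.

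Once \eqref{squareA} is established, the eigen-pair claim is immediate: $\bs M^{(2)}\tilde{\bs b}_j=\widetilde{\bs M}^2\tilde{\bs b}_j=(z_j^{(4)})^2\tilde{\bs b}_j$, yielding $\mu_j=(z_j^{(4)})^2$ and the eigenvectors \eqref{EigvectorW}; simplicity and conjugate symmetry follow from Theorem \ref{lem:Bnzero}(i) applied with $\alpha=4$. For the region, write $z_j^{(4)}=\rho_j\re^{\ri\theta_j}$ with $\theta_j\in(\Theta_{N,4},\pi]\cup[-\pi,-\Theta_{N,4})$ and set $\varphi_j=\pi-|\theta_j|\in[0,\pi-\Theta_{N,4})$; then $\arg(z_j^{(4)})^2\equiv 2\theta_j\pmod{2\pi}$ has absolute value $2\varphi_j\le 2(\pi-\Theta_{N,4})$, and the identity $1-\cos\theta_j=1+\cos\varphi_j$ converts the cardioid bound $\rho_j\le(1-\cos\theta_j)/(N+3)$ into $|(z_j^{(4)})^2|\le((1+\cos(\arg((z_j^{(4)})^2)/2))/(N+3))^2$; the lower bound $|\mu_j|>1/(N+5/3)^2$ follows by squaring the annulus lower bound in Theorem \ref{lem:Bnzero}(iii).

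The main obstacle is the entrywise verification of $\bs M^{(2)}=\widetilde{\bs M}^2$: both the explicit Jacobi coefficients $a_n^{(4)},b_n^{(4)},c_n^{(4)}$ and the inner products in \eqref{eq:wdmatrix} are non-trivial rational functions of $j$, so the algebraic simplifications — especially the reconciliation at the $(N-1,N-1)$ boundary entry, where the pseudospectral inner product departs from the continuous one — are the delicate part of the proof.
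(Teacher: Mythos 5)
Your proposal is correct and follows essentially the same route as the paper: write the $\alpha=4$ recurrence in matrix form to get the tridiagonal $\widetilde{\bs M}$ with eigenpairs $(-z_j^{(4)},\tilde{\bs b}_j)$, verify $\bs M^{(2)}=\widetilde{\bs M}^2$ entrywise (including the truncated $(N-1,N-1)$ corner, which indeed reproduces the pseudospectral value $(-N^3-2N^2+4N+2)/(N(N+1)^2(2N-1)(2N+1))$ exactly), and then square the spectral data. Your explicit translation of the crescent region under $z\mapsto z^2$ is actually more detailed than the paper's one-line appeal to Theorem \ref{lem:Bnzero} with $\alpha=4$.
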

\begin{proof}  We  write the recurrence relation \eqref{eq:recur0} with $\alpha = 4, \beta = 2$ and $j=0, \cdots, N-2$ as the matrix form
\begin{equation}\label{eq:th41-2}
  -z \tilde{\bs b}(z) = \widetilde{\bs M} \tilde{\bs b}(z) - \frac{N+2}{(N+1)(2N+1)} B_N^{(4)}(z) \bs e_N,
\end{equation}
where $\tilde{\bs b}(z) = \big(B_0^{(4)}(z), B_1^{(4)}(z), \cdots, B_{N-1}^{(4)}(z)\big)^\top$ and $\widetilde{\bs M}$ is the tri-diagonal Jacobi matrix with nonzero entries given by
\begin{equation}\label{eq:Mat2nd}
 \widetilde {\bs M}_{jk} =
  \begin{dcases}
    \frac{j}{(j+1)(2j+3)}, & k=j-1,\;\;\; 1 \le j \le N-1, \\
    \frac{1}{j+1} - \frac{1}{j+2}, & k=j,\;\; 0 \le j \le N-1, \\
    -\frac{j+3}{(j+2)(2j+3)}, & k=j+1,\;\; 0 \le j \le N-2.
  \end{dcases}
\end{equation}
We conclude from \eqref{eq:th41-2} that the eigenvalues of $\widetilde{\bs M}$ are $N$ zeros of $ B_{N}^{(4)}(z)$ with the corresponding unit eigenvectors given by  \eqref{EigvectorW}.

Notably,  we can directly  verify from \eqref{eq:wdmatrix} and \eqref{eq:Mat2nd} that $\bs M^{(2)}=\widetilde{\bs M}\times \widetilde{\bs M}.$
In view of  Theorem \ref{lem:Bnzero} with $\alpha = 4$,   we complete the proof.
\end{proof}

In Figure \ref{fig:muj51} (left), we depict the distribution of  eigenvalues of $\bs M^{(2)}$ and the region $\mathcal{R}_{N, 4}^{(2)}$ given in \eqref{eq:RegionMw} with $N=51$. Apparently, they are all distributed within  the shaded region  as shown in Theorem \ref{thm:eigMw}. We also demonstrate the  eigenvalues for various $N,$ and illustrate behaviours of the maximum and the minimum magnitudes in the other two sub-figures in Figure \ref{fig:muj51}.
As predicted by Theorem \ref{thm:eigMw}, we have $|\mu_j|=O(N^{-2}).$ 

\begin{figure}[htp]
  \centering
  \includegraphics[width=.34\textwidth]{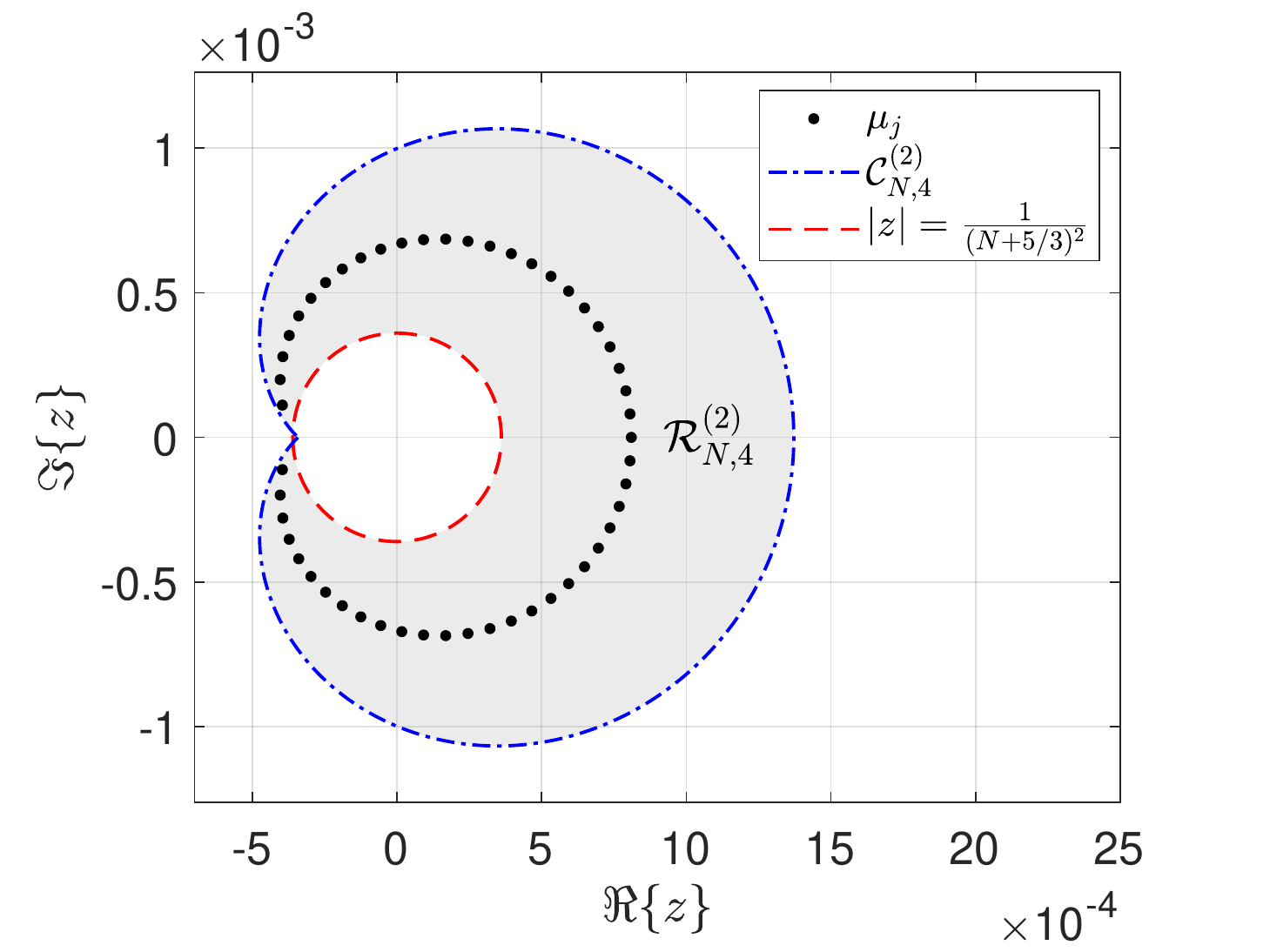}\hspace*{-10pt}
  \includegraphics[width=.34\textwidth]{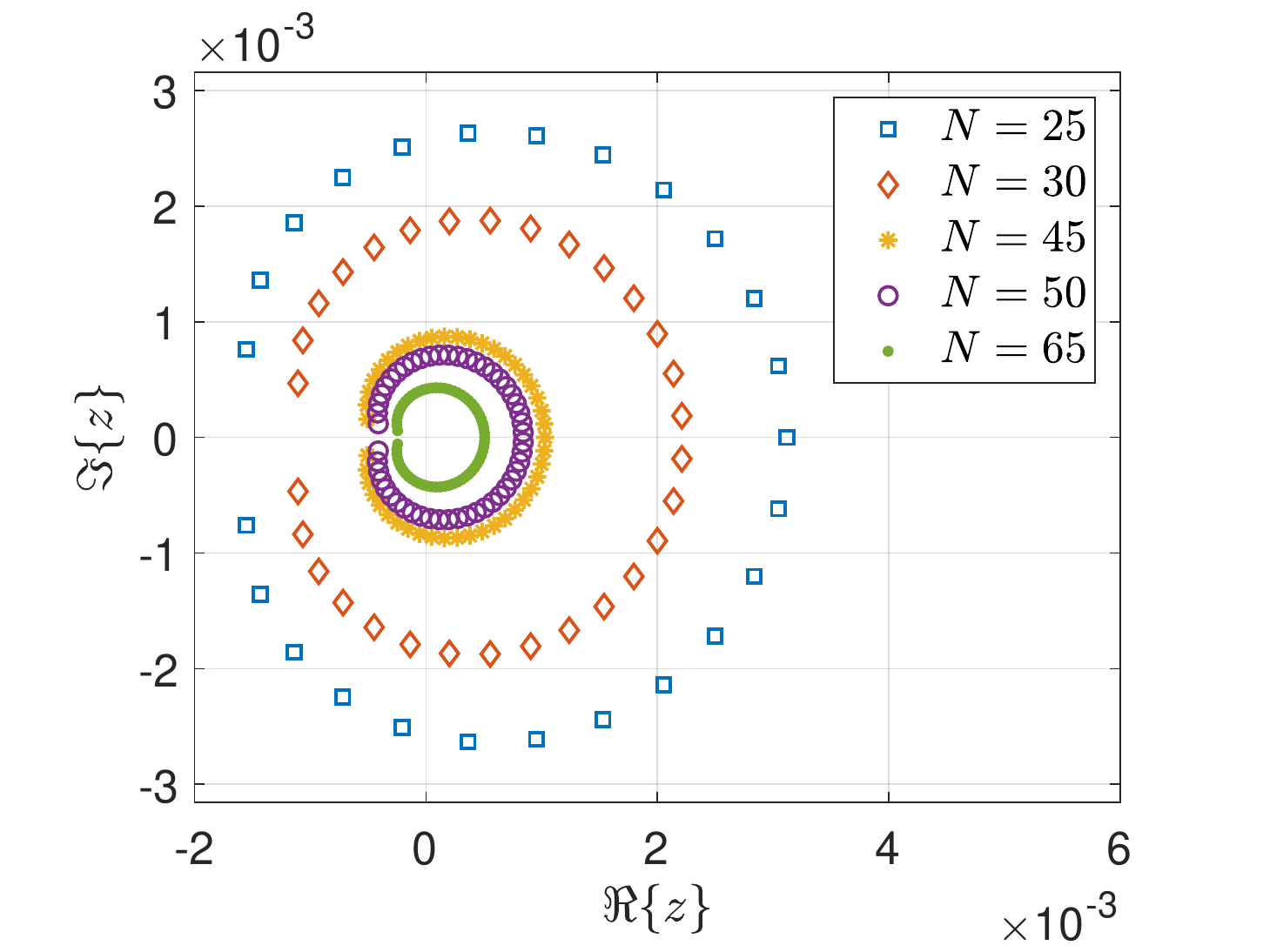}\hspace*{-10pt}
  \includegraphics[width=.34\textwidth]{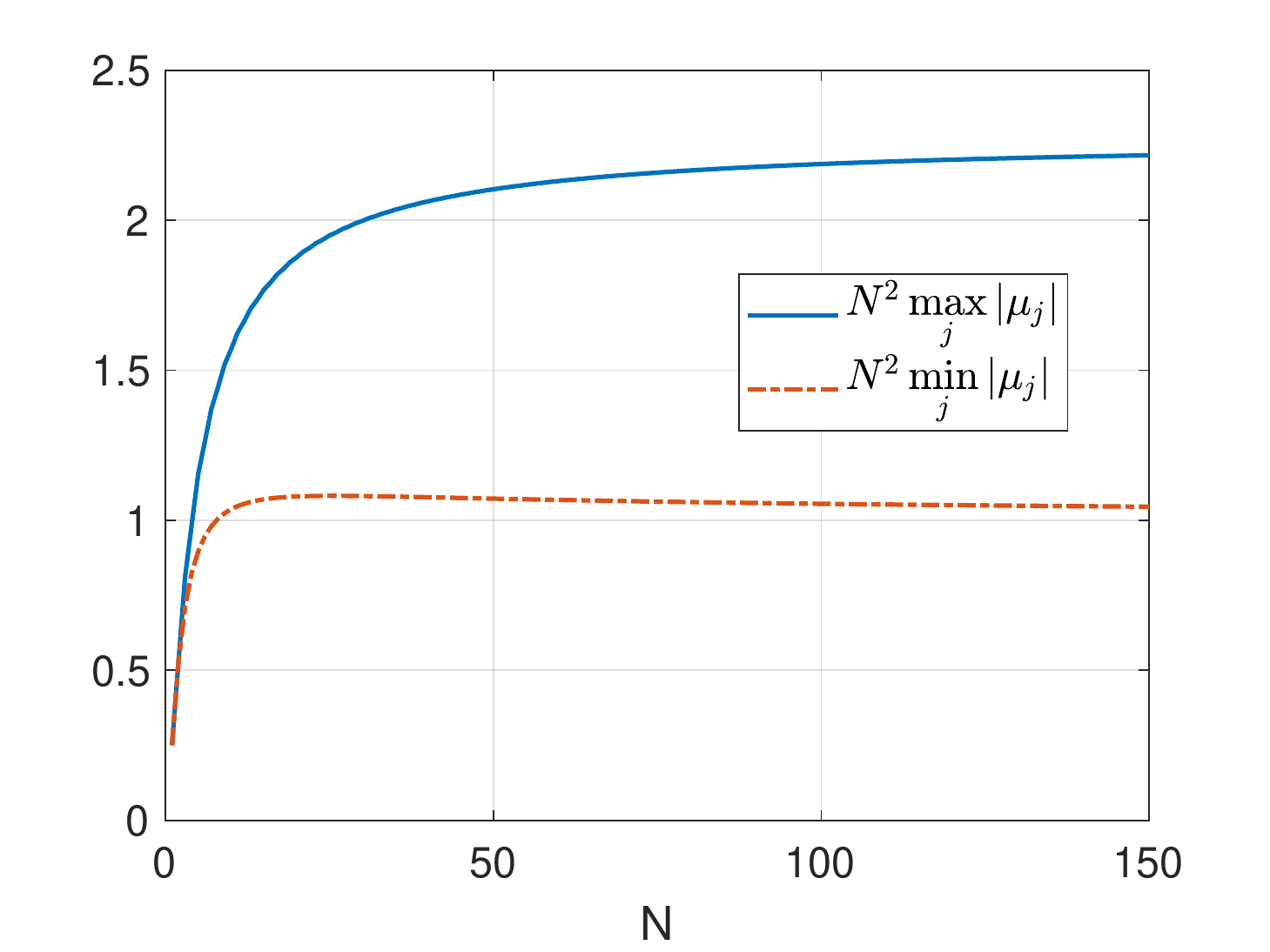}
  \vspace*{-10pt}
  \caption{\small Eigenvalues of $\bs M^{(2)}$  in \eqref{eq:wdmatrix} with the same setting as Figure  \ref{fig:lamjMD51A}. }
  \label{fig:muj51}
\end{figure}

It is seen from the above discussions that with the appropriate choice of basis functions, we were able to associate the mass matrix of  the pseudospectral scheme \eqref{eq:dPGalM2nd2}  with the GBP through  \eqref{squareA}. However, for the spectral Petrov-Galerkin scheme, the eigenvalues in magnitude appear an $O(N^{-3})$ perturbation of those for the pseudospectral ones.
To demonstrate  this,  we   replace the discrete inner product in  \eqref{eq:dPGalM2nd2} by the continuous inner product, leading to the spectral dual-Petrov-Galerkin scheme for \eqref{hyperModel2}{\rm:}  \emph{find $u_N = u_0 + (1+t)u_1 + v_N \in \mathbb P_{N+1}$ with $v_N \in V_N$ such that}
\begin{equation}\label{eq:dPGalM2nd}
  (v_N', \psi')  + \sigma (v_N, \psi) = -\sigma (u_0 + (1+t)u_1, \psi),\quad \forall \psi \in V_N^*.
\end{equation}
The only difference is to modify the last entry of $\bs M^{(2)}$ in  the linear system \eqref{linearsys}, i.e.,
\eqref{phi2} by
$$
(\phi_{N-1},  \phi_{N-1}^*)= -\frac{2(N^2+N-3)}{N(N+1)(2N-1)(2N+3)}= \langle\phi_{N-1},  \phi_{N-1}^*\rangle_N+O(N^{-2}).
$$
Numerically, we find from Figure \ref{fig:MJ-diff-2nd} that the eigenvalues $\{\tilde \mu_j\}$ of the modified matrix are a  $O(N^{-3})$ perturbation of the eigenvalues $\{\mu_j\}$ of $\bs M^{(2)}$.

\begin{figure}[!ht]
  \centering
  \includegraphics[width=.45\textwidth]{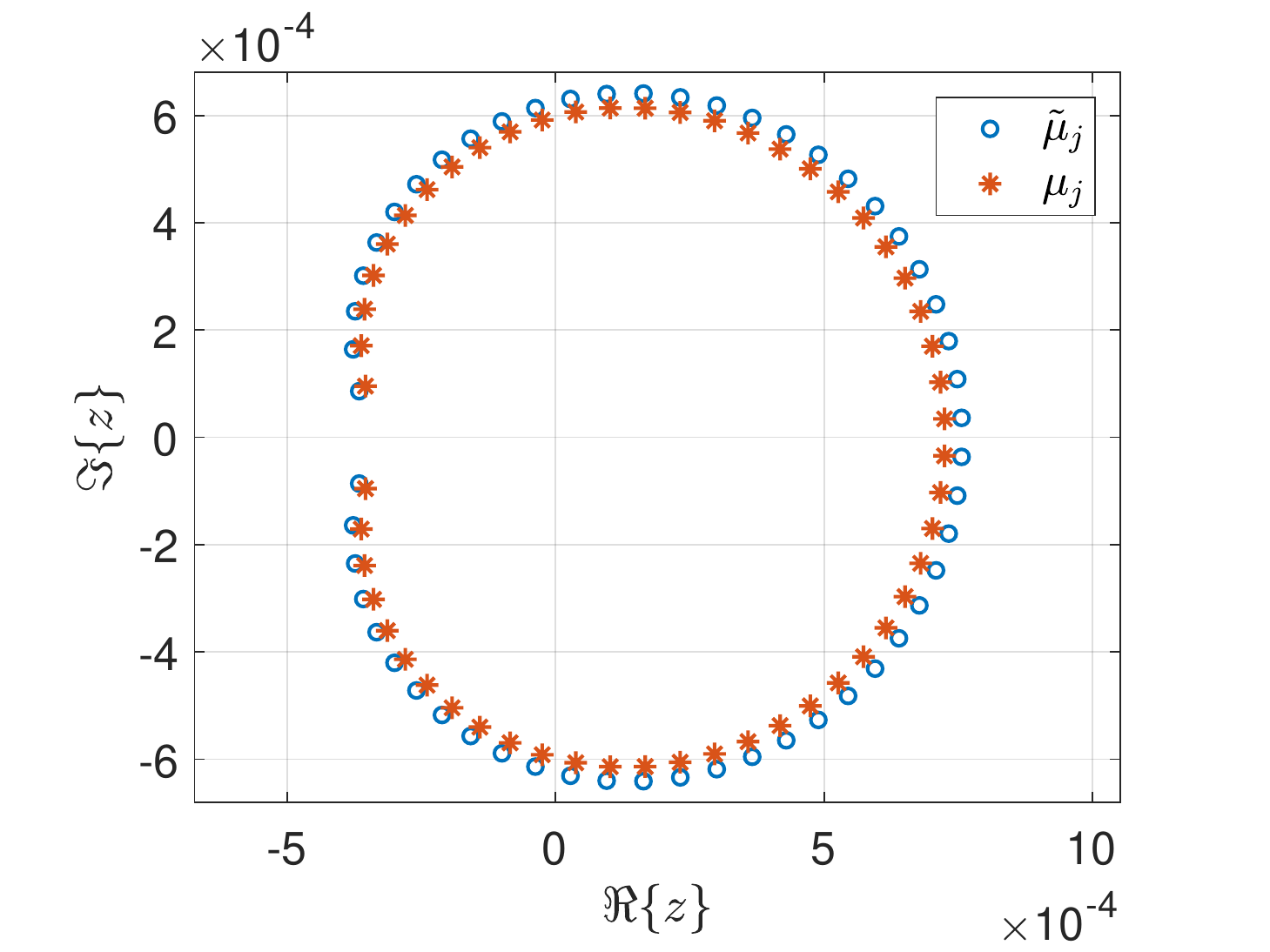} \quad
  \includegraphics[width=.45\textwidth]{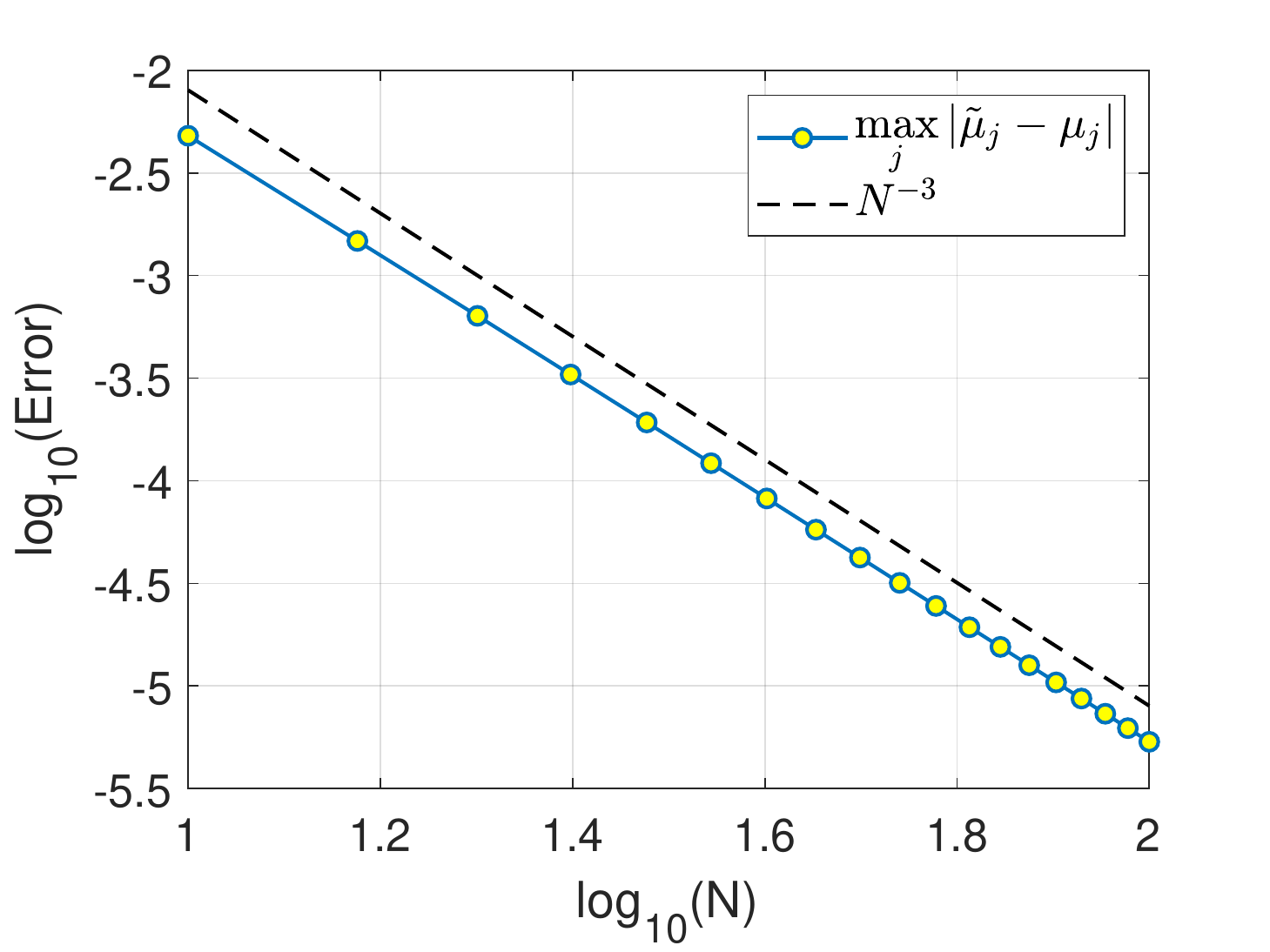}
  \vspace*{-6pt}
  \caption{\small Eigenvalues of $\bs M^{(2)}$ in the pseudospectral scheme \eqref{eq:dPGalM2nd2}   and the spectral scheme
  \eqref{eq:dPGalM2nd}. Left: Distributions
   with $N=54.$ Right:  Asymptotic order $O(N^{-3})$.}
  \label{fig:MJ-diff-2nd}
\end{figure}

\section{Eigenvalue analysis of LDPG  methods for higher-order IVPs}\label{sect5:higher}
\setcounter{equation}{0}
\setcounter{lmm}{0}
\setcounter{thm}{0}

In this section, we extend the eigenvalue analysis to higher-order IVPs with a focus on the third-order IVPs. We also consider the reformulation of the high-order IVPs as a system of first-order equations for which we can precisely characterise the eigenvalues.
\subsection{Third-order IVPs}
 To fix the idea, we consider the third-order IVP:
\begin{equation}\label{eq:IVP3rd}
  u'''(t) = \sigma u(t),\quad t \in I; \;\quad
  u(-1) = u_0,\;\; u'(-1) = u_1, \;\; u''(-1) = u_2, 
\end{equation}
where $\sigma\not=0, u_0, u_1$ and $u_2$ are given constants. 

Define the dual approximation spaces
\begin{equation*}
\begin{split}
  & V_N = \{ \phi \in \mathbb P_{N+2}: \phi(-1) = \phi'(-1) = \phi''(-1) = 0 \},\\
&  V_N^* = \{ \phi \in \mathbb P_{N+2}: \phi(1) = \phi'(1) = \phi''(1) = 0 \},
  \end{split}
\end{equation*}
with ${\rm dim}(V_N)={\rm dim}(V_N^*)=N.$
Then the LDPG scheme for \eqref{eq:IVP3rd} is
\begin{equation}\label{eq:DualSpec3rd}
\begin{dcases}
  \text{Find} \; u_N = u_0 + (1+t)u_1 + \frac12(1+t)^2 u_2 + v_N \in \mathbb P_{N+2}\;\; \text{with}\;\; v_N\in V_N\;\; \text{s.t.}\\
  (v_N', \psi'') - \sigma (v_N, \psi) = \sigma (u_0 + u_1 (1+t) +  \frac{u_2}2(1+t)^2, \psi),\quad \forall \psi \in V_N^*.
 \end{dcases}
\end{equation}
Choose the basis functions
\begin{equation*}
\begin{split}
  \phi_k(t) &= d_k \big( P_k(t) + a_k P_{k+1}(t) + b_k P_{k+2}(t) + c_k P_{k+3}(t) \big) \in V_N, \\
  \phi_k^*(t) &= e_k \big( P_k(t) - a_k P_{k+1}(t) + b_k P_{k+2}(t) - c_k P_{k+3}(t) \big) \in V_N^*,
\end{split}
\end{equation*}
for $0\le  k \le N-1$, where
\begin{gather*}\label{eq:abck3rd}
  a_k = \frac{3(2k+3)}{2k+5},\quad b_k = \frac{3(k+1)}{k+3},\quad
  c_k = \frac{(k+1)(2k+3)}{(k+3)(2k+5)},\\
  d_k = \frac{(k+2)(k+3)}{2(2k+3)},\quad e_k = \frac{1}{(k+1)(k+2)(2k+3)}.
\end{gather*}
Then we can write and denote
\begin{equation*}
  v_N(t) = \sum_{k=0}^{N-1} \tilde{v}_k \phi_k(t), \quad \tilde{\bs v} = (\tilde{v}_0, \cdots, \tilde{v}_{N-1})^\top.
\end{equation*}
Substituting $v_N$ into \eqref{eq:DualSpec3rd} and taking $\psi = \phi_j^*$ for $0 \le j \le N-1$ lead to
\begin{equation}\label{3rdLDPGs}
  (\bs I_N - \sigma \bs M^{(3)}) \tilde{\bs v} = \sigma \bs g,
\end{equation}
where the elements of the column-$N$ vector $\bs g$ is given by
$$\bs g_j= (u_0 + u_1(1+t) + \frac{u_2}2 (1+t)^2, \phi_j^*),\quad 0 \le j \le N-1.$$
 Using the properties of Legendre polynomials, we find that 
 the nonzero entries of the seven-diagonal (non-symmetric) matrix $\bs M^{(3)}$ are given by
\begin{equation*}
\begin{split}
  \bs M_{jk}^{(3)} = (\phi_k, \phi_j^*) =
  \begin{dcases}
    e_j d_{j-3} c_{j-3} \gamma_j, & k = j-3, \\
    e_j d_{j-2} \big( b_{j-2} \gamma_j - c_{j-2}a_j \gamma_{j+1} \big), & k = j-2, \\
    e_j d_{j-1} \big( a_{j-1} \gamma_j - b_{j-1}a_j \gamma_{j+1} + c_{j-1}b_j \gamma_{j+2} \big) & k = j-1, \\
    e_j d_j \big( \gamma_j - a_j^2 \gamma_{j+1} + b_j^2 \gamma_{j+2} - c_j^2 \gamma_{j+3} \big), & k = j, \\
    e_j d_{j+1} \big( -a_j \gamma_{j+1} + a_{j+1}b_j \gamma_{j+2} - b_{j+1}c_j \gamma_{j+3} \big), & k = j+1, \\
    e_j d_{j+2} \big( b_j \gamma_{j+2} - a_{j+2}c_j \gamma_{j+3} \big), & k = j+2, \\
    -e_j d_{j+3} c_j \gamma_{j+3}, & k = j+3,
  \end{dcases}
\end{split}
\end{equation*}
where $\gamma_j = 2/(2j+1)$ as in \eqref{eq:PnOrtho}, and the matrix of the third derivative is an identity matrix as
\begin{equation*}
  (\phi_k''', \phi_j^*) = - (\phi_k'', (\phi_j^*)' ) = (\phi_k', (\phi_j^*)'') =  \delta_{jk}.
\end{equation*}
We can directly verify the following properties and omit the details.
\begin{prop}\label{Bn5case} The matrix $\bs M^{(3)}$ is only different from $(\breve{\bs M})^3$ in three entries:
\begin{equation}\label{diffind}
(j,k)= (N-2, N-1),\; (N-1, N-2),\; (N-1, N-1),
\end{equation}
where $\breve{\bs M}$ is  the tri-diagonal Jacobi matrix of the three-term recurrence relation for the GBPs $\{B_n^{(5)}(z)\}$ with nonzero entries given by
\begin{equation*}\label{eq:Mat3rd}
 \breve {\bs M}_{jk} =
  \begin{dcases}
    \frac{j}{(j+2)(2j+3)}, & k=j-1,\;\;\; 1 \le j \le N-1, \\
    \frac{6}{(2j+3)(2j+5)}, & k=j,\;\; 0 \le j \le N-1, \\
    -\frac{j+4}{(j+2)(2j+5)}, & k=j+1,\;\; 0 \le j \le N-2.
  \end{dcases}
\end{equation*}
Moreover, for these $(j,k)$ in \eqref{diffind},
\begin{equation*}\label{Morder3}
|{\bs M}^{(3)}_{jk}-  \big(\breve {\bs M}^3\big)_{jk}|=O(N^{-3}).
\end{equation*}
\end{prop}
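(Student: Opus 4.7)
My plan is to follow the template of Theorem \ref{thm:eigMw}, where the identity $\bs M^{(2)} = \widetilde{\bs M}^2$ was obtained by reading off the Jacobi matrix from the GBP recurrence, and then to isolate the three boundary entries at which truncation prevents an exact identity from holding. First, I would write the three-term recurrence \eqref{eq:recur0} with $\alpha = 5, \beta = 2$ in matrix form
\[
-z\,\breve{\bs b}(z) \;=\; \breve{\bs M}\,\breve{\bs b}(z) \;+\; r_N(z)\,\bs e_N,
\]
where $\breve{\bs b}(z) = (B_0^{(5)}(z),\ldots, B_{N-1}^{(5)}(z))^\top$ and $r_N(z)$ is a constant multiple of $B_N^{(5)}(z)$. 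In this picture $\breve{\bs M}$ is the truncation of the infinite Jacobi matrix to the $N\times N$ block, and a length-$3$ walk in the tridiagonal adjacency graph contributing to $(\breve{\bs M}^3)_{jk}$ differs from the corresponding walk in the untruncated matrix only when it must pass through an index $\geq N$. Such walks are possible only when both endpoints $j, k$ lie in $\{N-2, N-1\}$, which pinpoints exactly the three indices listed in \eqref{diffind}.

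Second, to establish $\bs M^{(3)}_{jk} = (\breve{\bs M}^3)_{jk}$ for all $(j,k)$ outside the list, I would compute $(\phi_k, \phi_j^*)$ directly from Legendre orthogonality \eqref{eq:PnOrtho}. The coefficients $a_k, b_k, c_k, d_k, e_k$ are chosen so that the resulting rational expression factors through the tridiagonal structure of $\breve{\bs M}$, and this factorization can be verified by a finite collection of polynomial identities in $j$ (checkable algebraically or symbolically). The underlying reason, analogous to the second-order case, is that the bilinear form $(\,\cdot\,,\,\cdot\,)$ on the Petrov-Galerkin basis realizes in the bulk the action of the third iterate of the GBP multiplication operator, just as \eqref{eq:th41-2} realized its square.

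Third, for the three corner entries in \eqref{diffind}, both $\bs M^{(3)}_{jk}$ and $(\breve{\bs M}^3)_{jk}$ admit explicit rational expressions in $N$. Using the asymptotics $e_j \sim (2j^3)^{-1}$, $d_k \sim k/4$, and $\gamma_j \sim j^{-1}$ for $j, k \sim N$, each individual summand appearing in either expression carries an intrinsic scale of at most $O(N^{-3})$; hence their difference, regardless of any cancellations among leading-order terms, automatically inherits the bound $O(N^{-3})$. I expect the main obstacle to be the algebraic bookkeeping in Step 2: the polynomial identities between $(\phi_k, \phi_j^*)$ and the corresponding path sums for $\breve{\bs M}^3$ are numerous and tedious, which is precisely why the authors defer the routine verification. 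The estimate in Step 3, by contrast, reduces to simple size bounds once both entries have been written out explicitly.
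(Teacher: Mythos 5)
The paper offers no actual proof here---it states only that the properties ``can be directly verified''---so your outline (entrywise comparison of $(\phi_k,\phi_j^*)$ with the walk sums of $\breve{\bs M}^3$, discrepancies localised at the truncation corner, and a trivial size bound for the $O(N^{-3})$ estimate) is consistent in spirit with what the authors intend, and Step 3 is indeed correct as stated: every entry of $\bs M^{(3)}$ and of $\breve{\bs M}^3$ with indices near $N$ is individually $O(N^{-3})$, so the claimed bound on the difference is automatic.

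There is, however, one concrete slip in Step 1. Your criterion ``both endpoints $j,k$ lie in $\{N-2,N-1\}$'' selects \emph{four} pairs, including $(N-2,N-2)$, whereas the proposition lists only three. You need the sharper observation that a length-$3$ walk with unit steps from $N-2$ back to $N-2$ can never reach the index $N$: after one step the walker is at height at most $N-1$, after two steps at most $N$, but a final position of $N-2$ forces the second intermediate index to be at most $N-1$. Hence $(N-2,N-2)$ is unaffected by truncation and exactly the three listed pairs remain. Two further points deserve a sentence each in a complete write-up. First, the walk argument compares $\breve{\bs M}^3$ with the cube of the \emph{infinite} Jacobi matrix, not with $\bs M^{(3)}$; to make the localisation rigorous you should verify in Step 2 that the uniform rational formula for $\bs M^{(3)}_{jk}$ reproduces the $N\times N$ block of the infinite cube at \emph{all} indices including the corner, so that the only source of discrepancy with $\breve{\bs M}^3$ is the missing walks through index $N$. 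Second, ``is only different in three entries'' asserts that the difference is nonzero there, which your size bound does not establish; this follows by exhibiting the missing walks explicitly (they contribute at exact order $N^{-3}$ for the two off-diagonal corners and $N^{-4}$ for the diagonal one, with no cancellation).
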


\begin{figure}[!th]
  \centering
  \includegraphics[width=.45\textwidth]{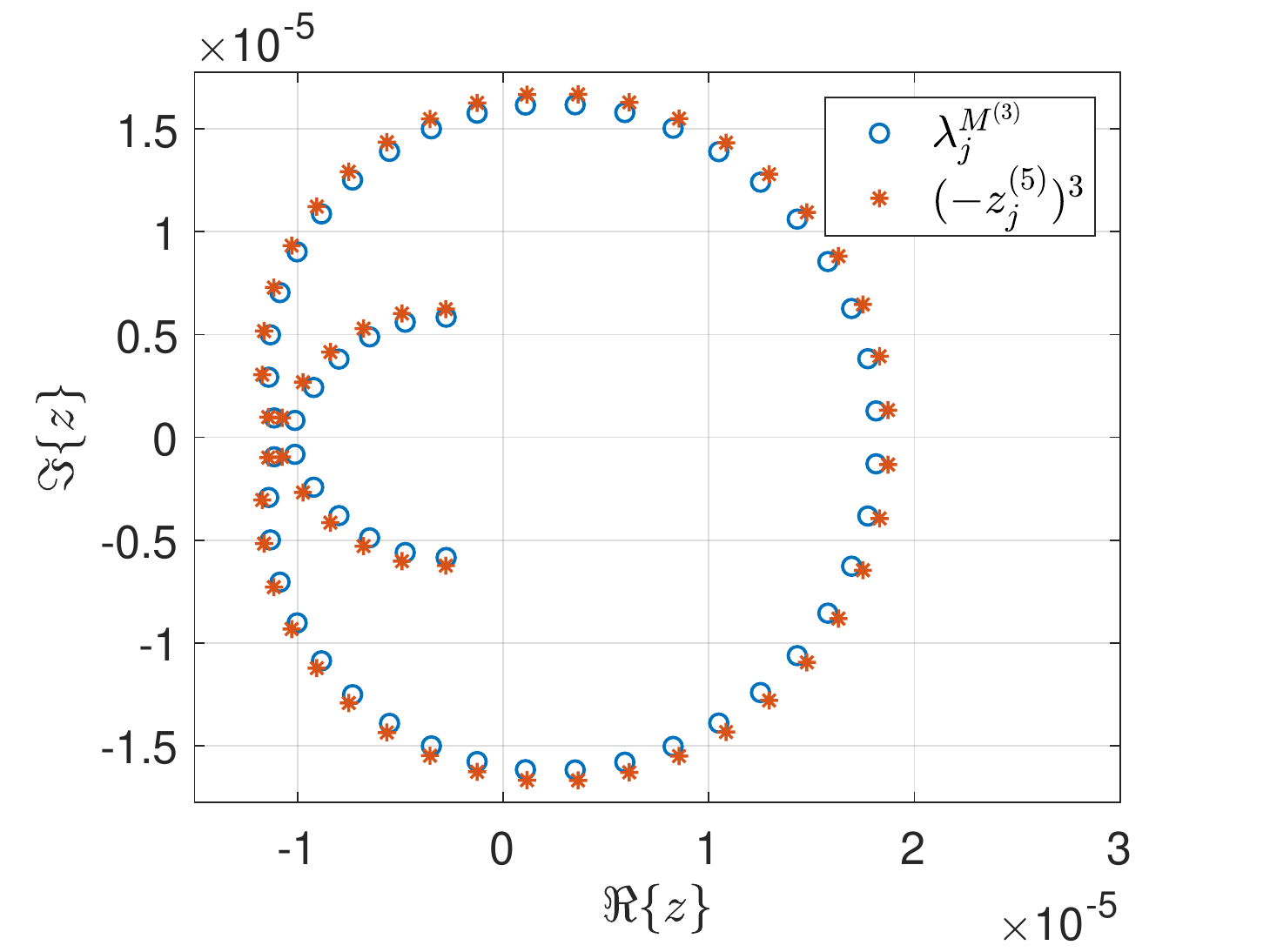} \quad
  \includegraphics[width=.45\textwidth]{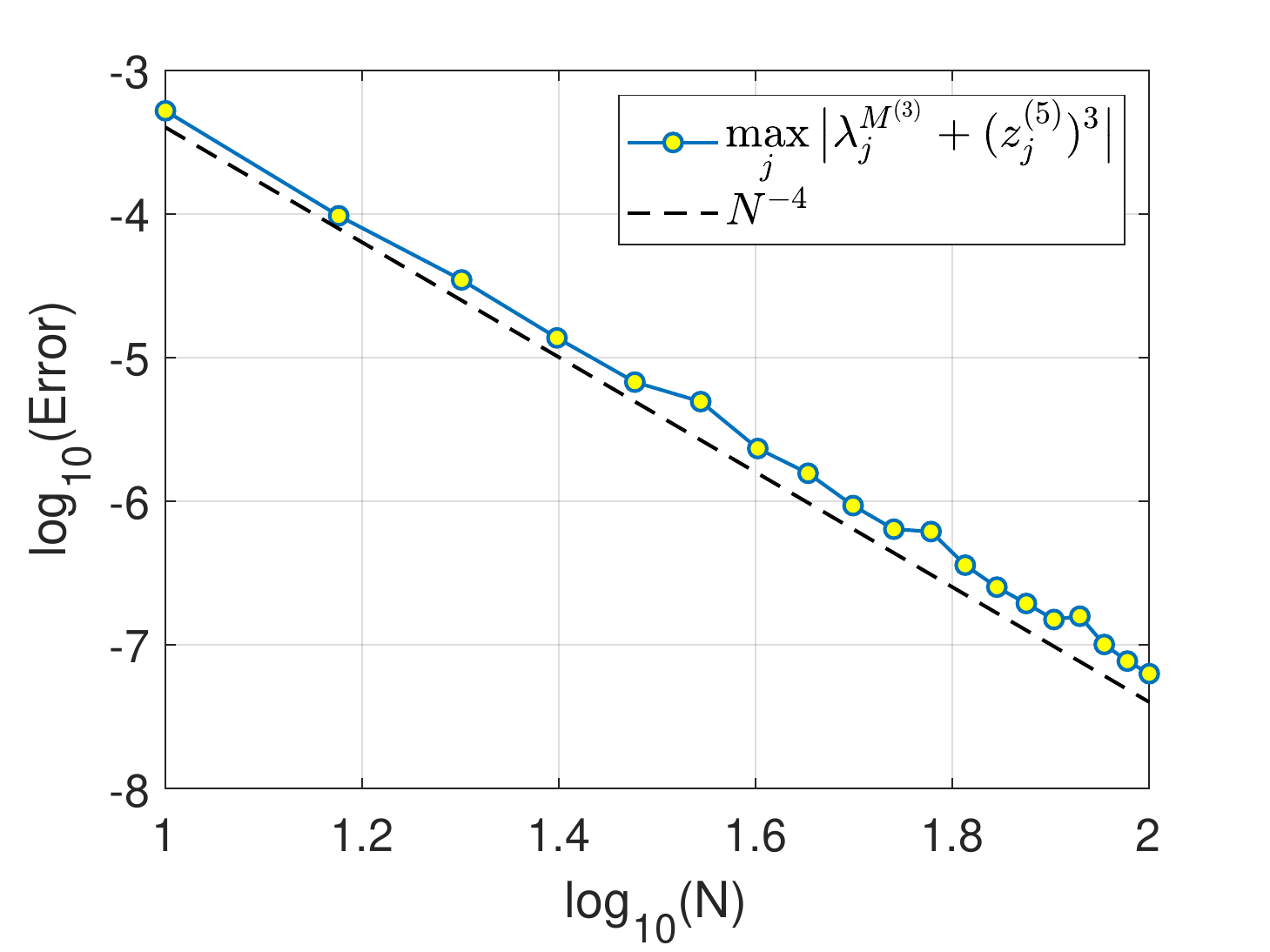}
  \caption{\small Left: Distributions of the eigenvalues of $\bs M^{(3)}$ and the cubic power of the Jacobi matrix of the GBPs
  $\{B_n^{(5)}\}$
  with $N=54.$ Right:  Numerical verification of \eqref{diffmaxj}.}
  \label{fig:MJ-diff-3rd}
\end{figure}

Note that  in Theorem \ref{thm:eigMw}, the matrix of the pseudospectral scheme   \eqref{eq:dPGalM2nd2}  corresponds exactly  to the Jacobi matrix of the GBPs $\{B_n^{(4)}(z)\}$. However,  in the third-order case, it seems not possible to construct a  pseudospectral scheme built upon a suitable quadrature rule so that
$\bs M^{(3)}=(\breve{\bs M})^3.$
The numerical evidence in Figure \ref{fig:MJ-diff-3rd} indicates
\begin{equation}\label{diffmaxj}
\max_{j}\big|\lambda_j^{\bs M^{(3)}}-(-z_j^{(5)})^3\big|=O(N^{-4}),
\end{equation}
where $\{z_j^{(5)}\}_{j=1}^N$ are  zeros of the GBP: $B_N^{(5)}(z).$

\subsection{General $m$th-order IVPs} In general, we consider the $m$th-order IVP:
\begin{equation}\label{eq:IVPmth}
  u^{(m)}(t) = \sigma u(t),\quad t \in I; \;\quad
  u^{(l)}(-1) = u_l, \;\;\; l=0,\cdots, m-1,
\end{equation}
where $\sigma\not=0$ and $\{u_l\}$ are given constants.  Interestingly, we observe   from the previous analysis of
the LDPG method for \eqref{eq:IVPmth} with $m=1,2,3$ the following pattern:
\begin{itemize}
\item[(i)] As shown in Theorem \ref{thm:eigMd} for the first-order IVP,  the eigenvalues of  the mass matrix can be exactly characterised by  zeros of the GBP: $B_N^{(3)}(z).$
\medskip
\item[(ii)] For the second-order IVP,  the eigen-pairs of the matrix for the  pseudospectral scheme \eqref{eq:dPGalM2nd2}  can be exactly characterised by (square of)  zeros of the GBP: $B_N^{(4)}(z)$ (see Theorem \ref{thm:eigMw}).  However, the corresponding matrix of the spectral scheme differs from that of the pseudospectral version in the last entry (i.e., $j=k=N-1$), which leads to a perturbation
in the eigenvalues of order $O(N^{-3})$ (from numerical evidences).
\medskip
\item[(iii)]  For the third-order IVP, the mass matrix of the spectral dual-Petrov-Galerkin scheme differs from the cube of the Jacobi matrix corresponding to the GBPs: $B_j^{(5)}(z)$ in the last three entries,  where the perturbation is of order $O(N^{-3}),$ see Proposition \ref{Bn5case}. However, a dual pseudospectral version with the exact correspondence to zeros of  $B_N^{(5)}(z)$ might not exist.
\end{itemize}
\smallskip

For the general $m$-th order IVP  \eqref{eq:IVPmth}, it is natural to speculate that the eigenvalues of the mass matrix  be    associated with the GBP: $B_N^{(m+2)}(z).$  More specifically, define the dual approximation spaces
\begin{equation*}
\begin{split}
  & V_N = \big\{ \phi \in \mathbb P_{N+m-1}: \phi^{(l)}(-1) = 0,\;\; l=0,\cdots, m-1\big\},\\
&  V_N^* =\big\{ \psi \in \mathbb P_{N+m-1}: \psi^{(l)}(1) = 0,\;\; l=0,\cdots, m-1 \big\},
  \end{split}
\end{equation*}
with ${\rm dim}(V_N)={\rm dim}(V_N^*)=N.$ Then, as in  the previous cases, we choose suitable compact combinations of Legendre polynomials
as basis functions denoted by $\{\phi_k,\phi_k^*\}_{k=0}^{N-1},$ respectively, such that $(\phi_k^{(m)},\phi_j^*)=\delta_{jk}.$ Denote the $N$-by-$N$ mass matrix by $\bs M^{(m)}$ with entries $\bs M^{(m)}_{jk}=(\phi_k,\phi_j^*).$
We conjecture that for $m<N,$  the matrix $\bs M^{(m)}$ differs from  $(\breve{\bs M})^m$ in $2m-3$ entries, where $\breve{\bs M}$ is
 the tri-diagonal  Jacobi matrix corresponding to the GBPs: $B_j^{(m+2)}(z).$ Moreover,  we speculate that
 \begin{equation*}\label{Mmconjecture}
 \|\bs M^{(m)}-(\breve{\bs M})^m\|_\infty=O(N^{-m}),\quad \max_{1\le j\le N}\big|\lambda_j^{\bs M^{(m)}}-\big(\!\!-z_j^{(m+2)}\big)^m\big|=
 O(N^{-(m+1)}),
 \end{equation*}
where $\{z_j^{(m+2)}\}$ are zeros of $B_N^{(m+2)}(z).$

\subsection{LDPG method for $m$th-order  IVP based on   first-order system} Remarkably, we can show that if we rewrite
the underlying IVP as a system of first-order equations, then we are able to exactly characterise the eigenvalues through  zeros of the GBP:
$B_N^{(3)}(z).$

To fix the idea, we still consider the third-order IVP \eqref{eq:IVP3rd}, and reformulate it as the system
\begin{equation*}\label{eq:IVPhigh1}
\begin{dcases}
  v_0'(t) = v_{1}(t),\;\; v_{1}'(t) = v_{2}(t),\;\; v_{2}'(t) = \sigma v_0(t),\quad t\in I=(-1,1),\\
  v_0(-1) = u_{0},\;\; v_{1}(-1) = u_{1},\;\; v_{2}(-1) = u_{2},
  \end{dcases}
\end{equation*}
where we introduced the unknowns:  $v_{i} = u^{(i)}$ for $i=0,1,2$.
For each first-order equation, we apply the Legendre dual-Petrov-Galerkin scheme  \eqref{eq:dPGalM}:  find $v_{i, N} = u_i + w_{i, N} \in \mathbb P_N$ with $w_{i, N} \in {_0 \mathbb P}_N$ such that for all $\psi \in {^0 \mathbb P_N}$,
\begin{equation}\label{eq:IVPhigh2}
\begin{dcases}
  (w_{0, N}', \psi) - ( w_{1, N}, \psi) = (u_1, \psi), \\
(w_{1, N}', \psi) - (w_{2, N}, \psi) = (u_2, \psi), \\
  ( w_{2, N}', \psi) - \sigma (w_{0, N}, \psi) = \sigma (u_0, \psi).
\end{dcases}
\end{equation}
Choosing the basis functions for ${^0 \mathbb P_N}$  and ${_0 \mathbb P_N}$ as in \eqref{eq:dPetbas}-\eqref{eq:dPetbas2},  we  write and denote
\begin{equation*}
w_{i, N}= \sum_{k=0}^{N-1} \hat w_{i, k} \phi_k(t),\quad
  \hat{\bs{w}}_{i} = (\hat w_{i, 0}, \cdots, \hat w_{i, N-1})^\top,\quad i = 1,2,3.
\end{equation*}
Taking $\psi = \phi_j^*$ in \eqref{eq:IVPhigh2}, we immediately obtain the corresponding linear systems:
\begin{equation}\label{eq:IVPhighSys}
\begin{split}
  \hat{\bs w}_{0} - \bs M \hat{\bs w}_{1}  = \hat{\bs g}_1, \;\;
  \hat{\bs w}_{1} - \bs M \hat{\bs w}_{2} = \hat{\bs g}_{2}, \;\;
  \hat{\bs w}_{2} - \sigma \bs M \hat{\bs w}_{0} = \sigma \hat{\bs g}_0,
\end{split}
\end{equation}
where the tri-diagnal matrix $\bs M$ is given in \eqref{eq:Mat1st} and the column-$N$ vectors $\hat{\bs g}_i$ have the entries
$\hat{\bs g}_{i, j} = (u_i, \phi_j^*)$ for  $0\le j\le N-1$ and $i=0, 1,2$.
We  eliminate $  \hat{\bs w}_{1}$ and $  \hat{\bs w}_{2}$ from \eqref{eq:IVPhighSys}, leading to
\begin{equation}\label{eq:IVP3rd2-mat}
  \big(\bs I_N- \sigma \bs M^3\big) \hat{\bs w}_0 =  \sigma \bs M^{2} \hat{\bs g}_0 + \bs M \hat{\bs g}_2 + \hat{\bs g}_1.
\end{equation}
In practice, we directly solve \eqref{eq:IVP3rd2-mat} and then obtain the numerical solution of the IVP \eqref{eq:IVP3rd} by
\begin{equation}\label{eq:IVP3rd2-uN}
u(t)\approx u_N(t):=u_0+w_{0,N}(t)=u_0+\sum_{k=0}^{N-1} {\hat w}_{0, k} \phi_k(t).
\end{equation}
\begin{rem}\emph{This approach should be in contrast to  the (direct) LDPG scheme \eqref{eq:DualSpec3rd}-\eqref{3rdLDPGs}. In fact, the scheme
\eqref{eq:IVP3rd2-mat}-\eqref{eq:IVP3rd2-uN} is easier to implement and  extend to higher-order IVPs, as it only involves  the matrix of the first-order IVP. However
the  initial conditions of derivatives are not exactly imposed, while the solution of the scheme \eqref{eq:DualSpec3rd}  exactly meets all initial conditions.}
\end{rem}

As a direct consequence of Theorem \ref{thm:eigMd}, we have the following exact   characterisation.
\begin{cor}\label{M3eign}   The eigenvalues of  $\bs M^3$ are   $\{\lambda_{N,j}^3 \}_{j=1}^N,$ where  $\{\lambda_{N,j} \}_{j=1}^N$ are the eigenvalues of the tri-diagonal matrix $\bs M$ with $N \ge 2$ as in
Theorem \ref{thm:eigMd}. The corresponding eigenvectors are given by \eqref{djeigen}.
\end{cor}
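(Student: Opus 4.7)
The plan is to observe that this is an immediate consequence of Theorem \ref{thm:eigMd} together with the elementary fact that the spectrum of a power of a diagonalisable matrix consists of the corresponding powers of the original spectrum. Concretely, I would first invoke Theorem \ref{thm:eigMd}(i), which asserts that $\bs M\in\mathbb R^{N\times N}$ has $N$ simple eigenvalues $\{\lambda_{N,j}\}_{j=1}^N$, given by $-z_j^{(3)}$ with $\{z_j^{(3)}\}$ the zeros of $B_N^{(3)}(z)$, together with the explicit unit eigenvectors $\{\bs v_j\}_{j=1}^N$ from \eqref{djeigen}. Since these $N$ eigenvalues are distinct, $\bs M$ is diagonalisable and the $\bs v_j$ form a basis of $\mathbb C^N$.

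The second step is a one-line iteration: from $\bs M \bs v_j = \lambda_{N,j}\bs v_j$ we get, by applying $\bs M$ twice more,
\begin{equation*}
    \bs M^3 \bs v_j = \bs M^2(\lambda_{N,j}\bs v_j) = \lambda_{N,j}^2 \bs M \bs v_j = \lambda_{N,j}^3 \bs v_j,\qquad 1\le j\le N,
\end{equation*}
so each $(\lambda_{N,j}^3, \bs v_j)$ is an eigen-pair of $\bs M^3$. Because $\{\bs v_j\}_{j=1}^N$ is a basis of $\mathbb C^N$, these $N$ pairs exhaust the spectrum of $\bs M^3$ (counted with multiplicity), which is exactly the assertion of the corollary.

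There is no real obstacle here: once Theorem \ref{thm:eigMd} supplies the full eigen-decomposition of $\bs M$, the result is pure functional calculus. One minor remark worth making inside the proof is that the cubes $\{\lambda_{N,j}^3\}_{j=1}^N$ need not themselves be pairwise distinct a priori, but the corollary is stated in terms of the multi-set of eigenvalues and so this is harmless; the eigenvectors in \eqref{djeigen} remain a complete set regardless. If desired, one could also present the same argument more compactly by writing $\bs M=\bs V\bs\Lambda\bs V^{-1}$ with $\bs V=[\bs v_1,\ldots,\bs v_N]$ and $\bs\Lambda=\mathrm{diag}(\lambda_{N,1},\ldots,\lambda_{N,N})$, and observing $\bs M^3=\bs V\bs\Lambda^3\bs V^{-1}$.
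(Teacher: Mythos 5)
Your proposal is correct and matches the paper's treatment: the paper simply presents the corollary as a direct consequence of Theorem \ref{thm:eigMd}, with the underlying reasoning being exactly the iteration $\bs M^3\bs v_j=\lambda_{N,j}^3\bs v_j$ that you spell out. Your added remarks on diagonalisability and the possible coincidence of cubes are harmless elaborations, not deviations.
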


It is straightforward to extend the above approach to  the general  $m$th-order IVP \eqref{eq:IVPmth}.  Following the same lines as for the third-order case, we look for the numerical solution \eqref{eq:IVP3rd2-uN}  that satisfies
\begin{equation*}\label{uMlAu}
  \big(\bs I_N - \sigma \bs M^m\big) \hat{\bs w}_0 =  \sigma \bs M^{m-1} \hat{\bs g}_0 + \sum_{l=1}^{m-1} \bs M^{l-1} \hat {\bs g}_l,
\end{equation*}
where the vectors $\hat{\bs g}_l$ are defined similarly  with the entries $\hat{\bs g}_{l, j} = (u_l, \phi_j^*)$ for  $0\le j\le N-1$ and $l=0, 1,\cdots, m-1$. Then the eigenvalue distributions in Corollary \ref{M3eign} for $m=3$ can be directly extended to general $m\ge 4.$ We omit the details.

\section{Computing zeros of  GBPs}\label{sect6:zero}
It is seen from the previous sections that the eigenvalue distributions could be precisely characterized or approximately described  by zeros of  GBPs.   Thus accurate computation of  zeros of  GBPs becomes crucial as a naive evaluation of the eigenvalues of the aforementioned  non-symmetric, even tri-diagonal, matrices with double precision may suffer from severe instability even for small $N.$

Observe from \eqref{gBPexp} that for $\beta\not=0,$ we can obtain zeros of $B_n^{(\alpha,\beta)}(z)$ from those of $B_n^{(\alpha)}(z)$ (i.e.,
$\beta=2$) through a simple scaling.   Here, we restrict our attention to $B_n^{(\alpha)}(z)$  with the parameter $\alpha$ satisfying the conditions in Theorem \ref{lem:Bnzero}. We denote its $n$ distinct zeros by $\{z_j^{(\alpha)}\}_{j=1}^n,$ which are conjugate pairs, so it suffices to compute half of them.

\subsection{Algorithm in Pasquini \cite{Pasquini1994,Pasquini2000}}
The algorithm introduced by  Pasquini \cite{Pasquini1994,Pasquini2000} is for finding zeros of polynomials that satisfy a class of second-order differential equations, like  \eqref{gBP2ndODE2} for the GBPs.  The basic idea is to convert the polynomial root-finding problem into
solving a suitable system of nonlinear equations so that their zeros are identical.  Such a reformulation
appears fairly effective for the GBPs  as the three-term recurrence  \eqref{eq:recur0} is  sensitive to the range of
$z$ in the complex plane  and the normalization.  Here, we outline the algorithm  in \cite{Pasquini2000} below.

Introduce the complex coordinates $\bs z=(z_1,\cdots, z_n)\in {\mathbb C}^n$ and define the nonlinear vector-valued function
\begin{equation*}\label{eq:BnnonitB}
		\bs F(\bs z) := (F_1(\bs z),\cdots, F_n(\bs z))^\top,\quad \bs z\in {\mathbb A}^{\!n}\subset {\mathbb C}^n,
\end{equation*}
where the nonlinear complex-valued  functions and admissible set are respectively given by
\begin{equation*}\label{eq:Bnnonit}
\begin{split}
		& F_i(\bs z) := \frac {\alpha} {2z_i} + \frac 1{z_i^2}+ \sum_{j=1,\ j \neq i}^n \frac 1{z_i-z_j},\quad 1\le i\le n,\\
		&{\mathbb A}^{\!n}:=\big\{\bs z\in {\mathbb C}^n\, :\, z_i\not=z_j, \; \text{if}\;\; i\not=j; \; {\rm and}\;\; z_i\not=0,\;i,j=1,\cdots,n\big\}.
		\end{split}
\end{equation*}
According to Pasquini \cite[Theorem 4.1]{Pasquini2000}, the vector $\bs z^{(\alpha)}_*=(z_1^{(\alpha)},\cdots, z_n^{(\alpha)})^\top$ contains $n$ zeros of $B_n^{(\alpha)}(z)$ if and only if  $\bs F(\bs z^{(\alpha)}_*)=\bs 0.$ Moreover, the Jacobian matrix of $\bs F$ is nonsingular at $\bs z= \bs z^{(\alpha)}_*.$ Then a suitable iterative solver, e.g. the Newton's method, can be employed to solve the
nonlinear system. In practice, one can choose  the initial guess within the region given in \eqref{eq:annulBna0}.

\subsection{Algorithm in Segura \cite{Segura2013}}  Segura \cite{Segura2013} proposed  accurate  algorithms for computing   complex zeros of solutions of second-order linear ODEs: $w''(z)+A(z)w(z)=0$ with $A(z)$ meromorphic, based on qualitative study of the solution structures. Through suitable substitutions and connections with Laguerre functions, the ODE of the GBPs can be   transformed into this form. The  Maple worksheets in symbolic computation are available on the author's website.

\subsection{Numerical tests}
We  implement the above algorithms in {\tt Matlab} (with double precision computation), but generate the reference zeros (i.e., the eigenvalues of the Jacobi matrix for the three-term recurrence relation of the GBPs) using the Multiprecision Computing Toolbox\footnote{Multiprecision Computing Toolbox: \url{https://www.advanpix.com/}} with a sufficiently
large number of digits.  Here, we denote the reference zeros by  $\{z_j^{\texttt{mp}}\}$ and name the above two algorithms  in order as
Method 1 and Method 2, respectively.

In Figure \ref{fig:comp-eig} we depict the relative errors between  the eigenvalues computed by the two methods and the reference values
for different $\alpha$. We observe that both Pasquini's algorithm and Segura's algorithm are stable with large $N$, and the former performs slightly better than the latter.
\begin{figure}[!th]
	\centering
	\includegraphics[width=0.4\textwidth]{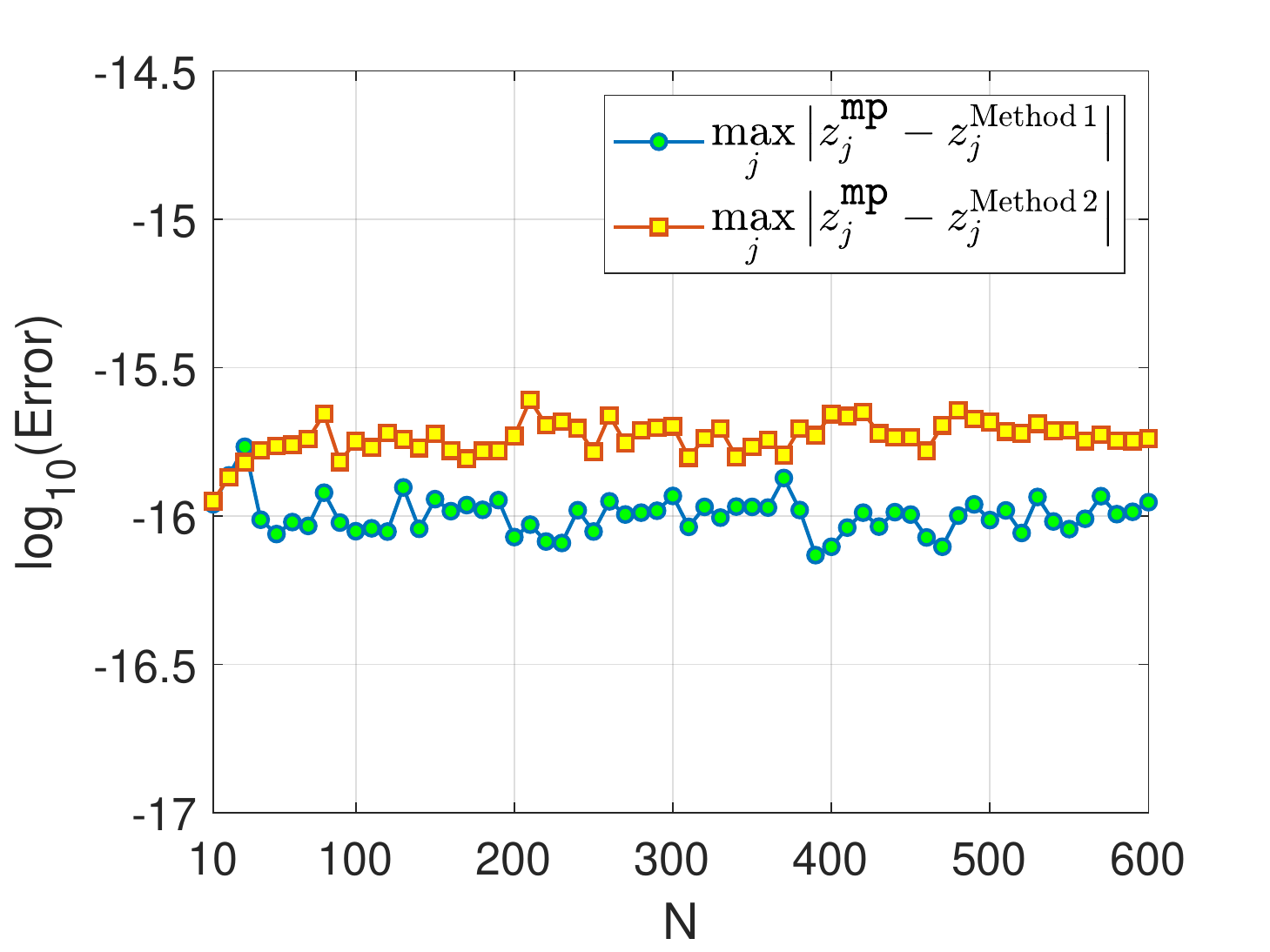}  \quad
	\includegraphics[width=0.4\textwidth]{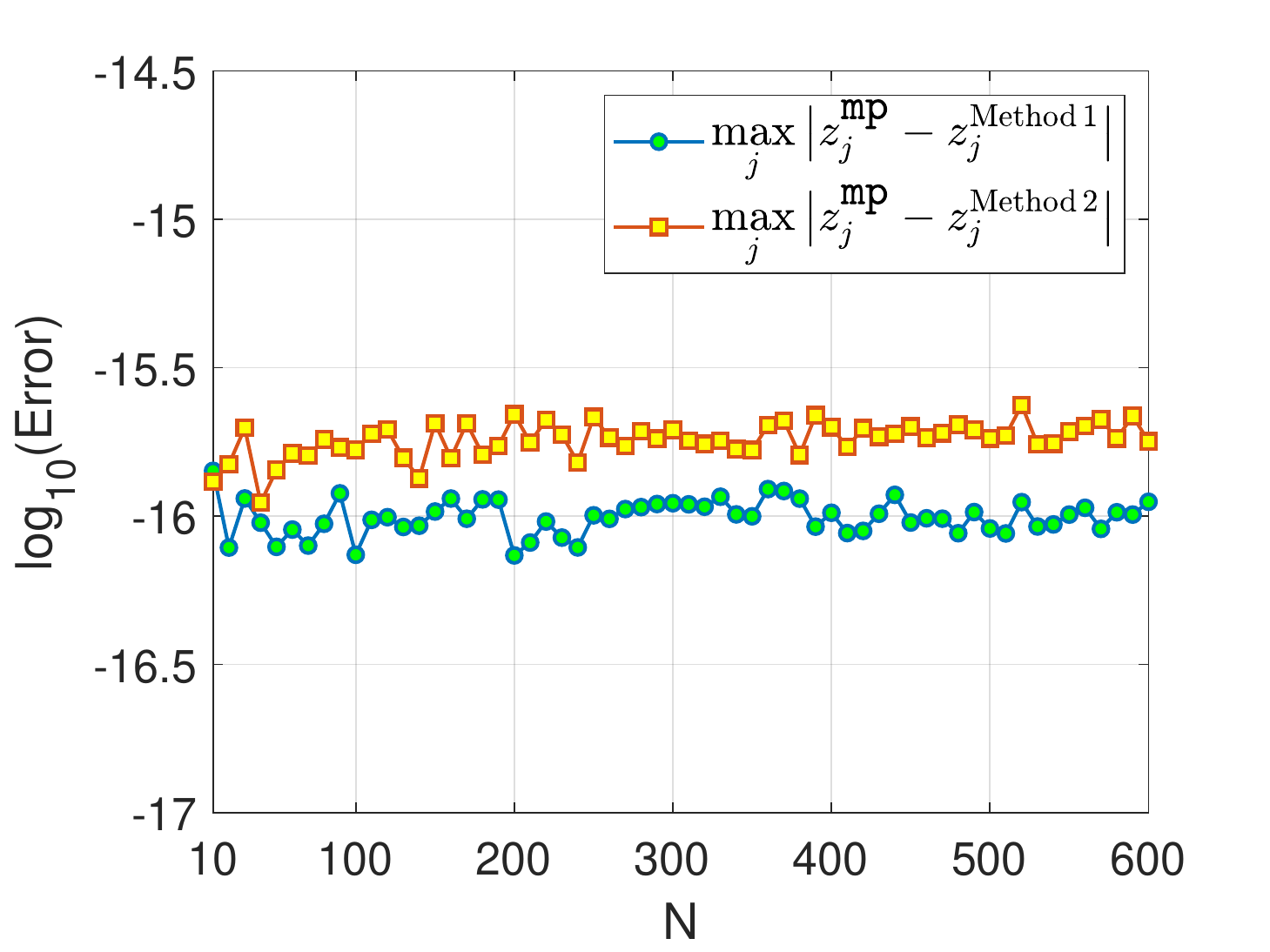}
	\vspace*{-6pt}
	\caption{\small Errors between the eigenvalues computed  by two algorithms  (with double precision) and reference values (with
	multiple precision) for $\alpha = 2$ (left) and $\alpha = 4$ (right).}
	\label{fig:comp-eig}
\end{figure}

\section{Space-time spectral methods}\label{Sect7:numerical}

The findings from  previous eigenvalue analysis have deep implications in spectral methods in time.  In this section, we first present a general framework
for the LDPG time discretisation of evolutionary PDEs with a focus on  two techniques: (i)  matrix diagonalisation and (ii) QZ decomposition (or generalised Schur decomposition) to  effectively deal with the resulted matrices.
We then apply the space-time spectral methods to some interesting linear and nonlinear wave problems that require high accuracy in both space and time.

\subsection{A general framework for the LDPG spectral method in time}\label{Subsect:gen}
We demonstrate the algorithm via the system of linear ODEs:
\begin{equation}\label{ODEsystemB}
\bs u'(t)+\bs A \bs u(t)=\bs f(t), \;\;  t\in (-1,1);\quad \bs u(-1)=\bs u_0,
\end{equation}
where  $\bs u, \bs f, \bs u_0 \in {\mathbb R}^{N_x}$ and $\bs A\in {\mathbb R}^{N_x\times N_x}$ (which might be resulted from spatial discretisations).

Using the LDPG scheme described in Subsection \ref{subsec:d1st} to \eqref{ODEsystemB}, we seek the LDPG spectral approximation in terms of the basis
\eqref{eq:dPetbas},
\begin{equation*}\label{vectexp}
\bs u(t)\approx \bs u_*(t)=\bs u_0+\sum_{k=0}^{N_t-1} \hat {\bs  u}_k \phi_k(t),
\end{equation*}
and denote the matrix of unknowns by $ \widehat {\bs  U}=(\hat {\bs  u}_0,\cdots, \hat {\bs  u}_{N_t-1})^\top \in {\mathbb R}^{N_t\times N_x}.$ Then
 the corresponding linear system reads
\begin{equation}\label{linearVec}
\widehat {\bs  U}+\bs M_t\, \widehat {\bs  U} \bs A^\top =\bs F,
\end{equation}
where $\bs M_t\in  {\mathbb R}^{N_t\times N_t}$  is the same as in \eqref{eq:dodesystem} and
$$\bs F= \bs U_0 \bs A^\top + (\bs { \hat {f}}_0,\cdots, \bs { \hat {f}}_{N_t-1})^\top \in {\mathbb R}^{N_t\times N_x},\quad
 \bs U_0=\sqrt{2} \bs e_1 \bs u_0^\top,\;\;  \bs { \hat {f}}_j = (\bs f, \phi_j^*),$$
with the basis functions $\{\phi_j^*\}$ given in \eqref{eq:dPetbas2}.

The linear system  \eqref{linearVec} in a moderate scale can be solved directly by rewriting it in the Kronecker product form, but  it is costly  in particular for multiple spatial dimensions in space, as the time discretisation is globally implicit.
  In the following, we introduce  two techniques  to alleviate this burden.
\subsubsection{Matrix diagonalisation}
Let
$$\bs E = (\bs v_1, \cdots, \bs v_{N_t}),\quad \bs \Lambda = {\rm diag}(\lambda_1, \cdots, \lambda_{N_t}), \;\;\; \text{so} \;\;\;     \bs M_t \, \bs E = \bs E \bs \Lambda,$$
 where $\{\lambda_j, \bs v_j\}_{j=1}^{N_t}$ are  eigen-pairs of $\bs M_{t}$ given in Theorem \ref{thm:eigMd}. Introducing the substitution $\widehat{\bs U} = \bs E \bs W,$
  we obtain from \eqref{linearVec} that
 \begin{equation}\label{eq:diag-algo0}
    \bs W - \bs \Lambda \bs W \bs A^\top = \bs E^{-1} \bs F =: {\bs G},
  \end{equation}
which can be decoupled and solved in parallel
\begin{equation}\label{eq:diag-algo}
    \big( \bs I_{\! N_x} - \lambda_j \bs A \big) \bs w_j = \bs g_j, \quad 1 \le j \le N_t,
  \end{equation}
 where $\bs w_j, \bs g_j$ are the $j$th column of $\bs W^\top, \bs G^\top,$ respectively.

 \begin{rem}\label{Diagrem}\emph{Different from a normal matrix,
we see from \eqref{eq:diag-algo0} that  the diagonalisation of a non-normal matrix involves the
inverse of the eigenvector matrix $\bs E$. In practice, we can evaluate $\bs G$ by solving the linear system
$ \bs E {\bs G}=\bs F$ via a suitable iterative solver. It is known that the stability and round-off errors essentially rely on the conditioning of $\bs E.$ Unfortunately, this eigenvector matrix is extremely ill-conditioned with an exponential growth condition number, see Figure  \ref{fig:oneway3} below. As a result, this technique needs to be implemented in  a multi-domain manner with relatively small $N_t$ on each sub-domain.}
 \end{rem}

 \subsubsection{QZ  {\rm(}or generalised Schur{\rm)} decomposition}
 Recall that given square matrices $\bs A$ and $\bs B$, the QZ decomposition factorizes both matrices as $\bs Q \bs A \bs Z=\bs S$ and
 $\bs Q \bs B \bs Z=\bs T,$ where $\bs Q$ and $\bs Z$ are unitary (i.e., $\bs Q^{-1}=\bs Q^{\rm H}, \bs Z^{-1}=\bs Z^{\rm H},$ where $\bs Q^{\rm H}$ stands for the  conjugate  transpose  or Hermitian transpose of $\bs Q$), and $\bs S$ and $\bs T$ are upper triangular.

We now  apply the QZ decomposition to $\bs A=\bs I_{\! N_t}$ and $\bs B=\bs M_t,$ and obtain
  \begin{equation}\label{eq:qz-deco}
    \bs Q \bs Z = \bs S,\quad  \bs Q \bs M_{\! t} \bs Z = \bs T.
  \end{equation}
  Introducing  $\widehat{\bs U} = \bs Z \bs W,$ and multiplying both sides of   \eqref{linearVec} by $\bs Q,$ we obtain from
  \eqref{eq:qz-deco} that
  \begin{equation}\label{zwequation}
    \bs S \bs W - \bs T \bs W \bs A^\top = \bs Q \bs F =: \bs G.
  \end{equation}
 As $\bs S,\bs T$ are upper triangular matrices, direct backward substitution reduces   \eqref{zwequation} to
\begin{equation}\label{QZsolver}
  \bs w_j \big( S_{jj} \bs I_{\! N_x} -  T_{jj} \bs A^\top \big) = \bs g_j - \bs r_j,
  \quad j = N_t, N_t-1, \cdots, 1,
\end{equation}
where  $\bs w_j, \bs g_j$ are the $j$th row of $\bs W, \bs G$, and
\begin{equation*}\label{QZrj}
\bs r_{N_t}=\bs 0,\quad   \bs r_j = \sum_{k=j+1}^{N_t} \bs w_k \big(S_{jk} \bs I_{\! N_x} -  T_{jk} \bs A^\top \big), \;\;\; j=N_t-1,\cdots, 1.
\end{equation*}
Here, $S_{jk},T_{jk}$ with $j\le k$ are the entries of  $\bs S, \bs T,$  respectively.

\begin{rem}\label{sequAlg} \emph{The QZ decomposition  is stable for large $N_t$ and  different from \eqref{eq:diag-algo0},
the step \eqref{zwequation} does not involve any matrix inversion.  Moreover,
 this technique can be assembled into a multi-domain approach  and  march sequentially in time. However,
as $\{\bs r_j\}$ involves $\bs w_k$ with $k>j,$    the linear systems \eqref{QZsolver} can only be solved sequentially.}
\end{rem}

\subsection{Space-time spectral methods for linear and nonlinear wave propagations}

\subsubsection{A linear wave-type equation}  As the first example of applications,
we consider the linear wave-type equation
  \begin{equation}\label{eq:oneway1}
    \begin{dcases}
      \partial_{x t}^2 u(x, t) + \sigma u(x, t) = 0,\quad  x\in \Omega:=(x_L, x_R), \;\; t\in (0,T], \\
      u(x, 0) = u_0(x),\;\; x\in \Omega;\quad u(x_L, t)=0,\;\; t\in [0,T],
    \end{dcases}
  \end{equation}
for a given constant $\sigma > 0.$
It  is of hyperbolic type since
 under the transformation,
 we have
\begin{equation*}\label{uxthyper}
\partial_{xt}^2 u(x,t)= \partial_{\xi}^2 v(\xi,\eta)- \partial_{\eta}^2 v(\xi,\eta),\quad u(x,t):=v(x+t, x-t).
\end{equation*}
As shown in   \cite{WKW22}, it  is related to the linear
  Kadomtsev-Petviashvili (KP) model, and the wave propagates along one side if the initial data is compactly supported.
  More precisely, if $\sigma>0$ and the initial value $u_0(x)$ is  compactly supported in $(x_0, \infty),$ then
$u(x, t)= 0$ for all $x< x_0$ and $t\ge 0.$
Thus, we can impose the left-sided boundary condition in \eqref{eq:oneway1}, and then simulate the waves in a finite domain. We transform \eqref{eq:oneway1} in $\Omega\times (0,T)$ to the reference square $\Lambda:=(-1,1)^2$ by simple linear transformations that lead to the same equation but with $\hat \sigma={4\sigma}/((x_R-x_L)T)$ in place of $\sigma.$

Applying the LDPG method in Subsection \ref{subsec:d1st} for spatial discretisation, we can obtain the system \eqref{ODEsystemB} of the form
\begin{equation}\label{ODEsystemB00}
\bs u'(t)+ \hat\sigma\bs M_x \bs u(t)=\bs 0, \;\;  t\in (-1,1);\quad \bs u(-1)=\bs u_0,
\end{equation}
where $\bs M_x\in  {\mathbb R}^{N_x\times N_x}$  is given in \eqref{eq:dodesystem}. Then the general setup detailed in Subsection
\ref{Subsect:gen} directly carries over to  \eqref{ODEsystemB00}.
We now test the proposed LDPG space-time spectral method  and choose
  \begin{equation}\label{eq:u0-gauss}
    u_0(x) = {\rm sech}^2 \big( \sqrt{3}(x+35)/6 \big) - {\rm sech}^2( 5\sqrt{3}/2), \quad x \in \Omega=(-50, 50).
  \end{equation}

\begin{figure}[!ht]
  \centering
  \subfigure[Diagonalisation (multi-domain in $t$)]{\includegraphics[width=.32\textwidth]{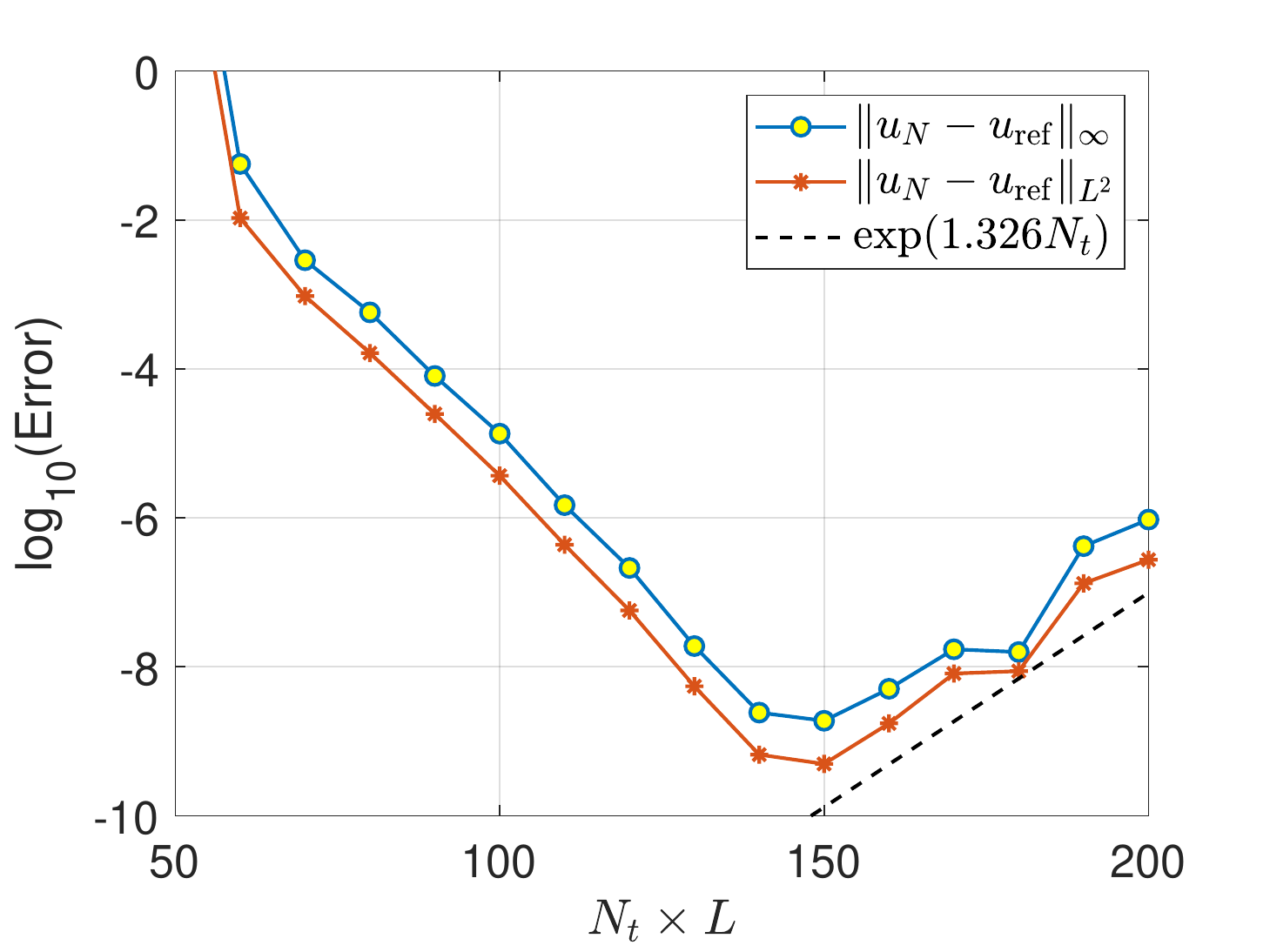}} \quad
  \subfigure[Conditioning of $\bs E$]{\includegraphics[width=.32\textwidth]{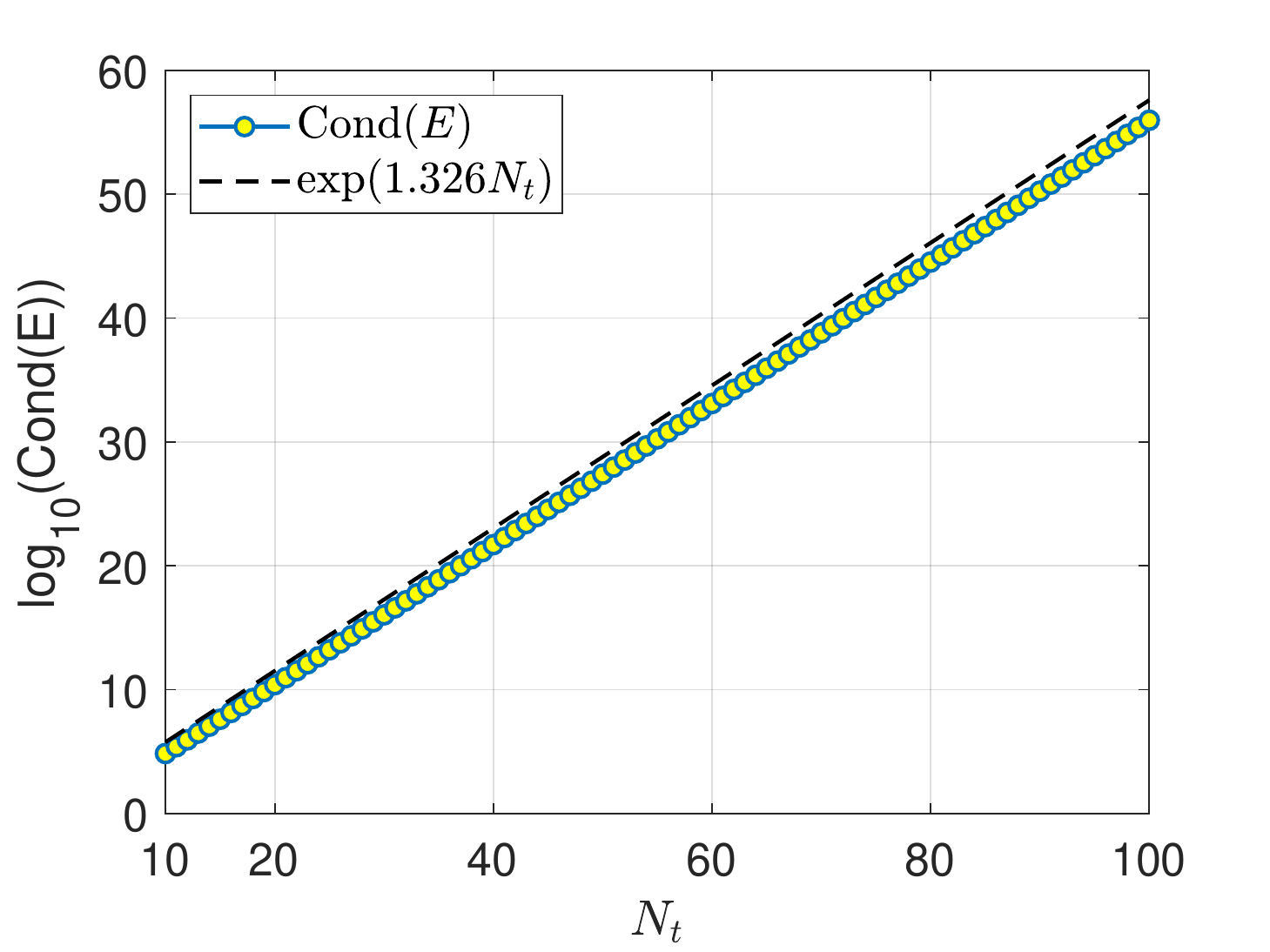}}
   \vspace*{-10pt}

\subfigure[QZ  (multi-domain in $t$)]{\includegraphics[width=.32\textwidth]{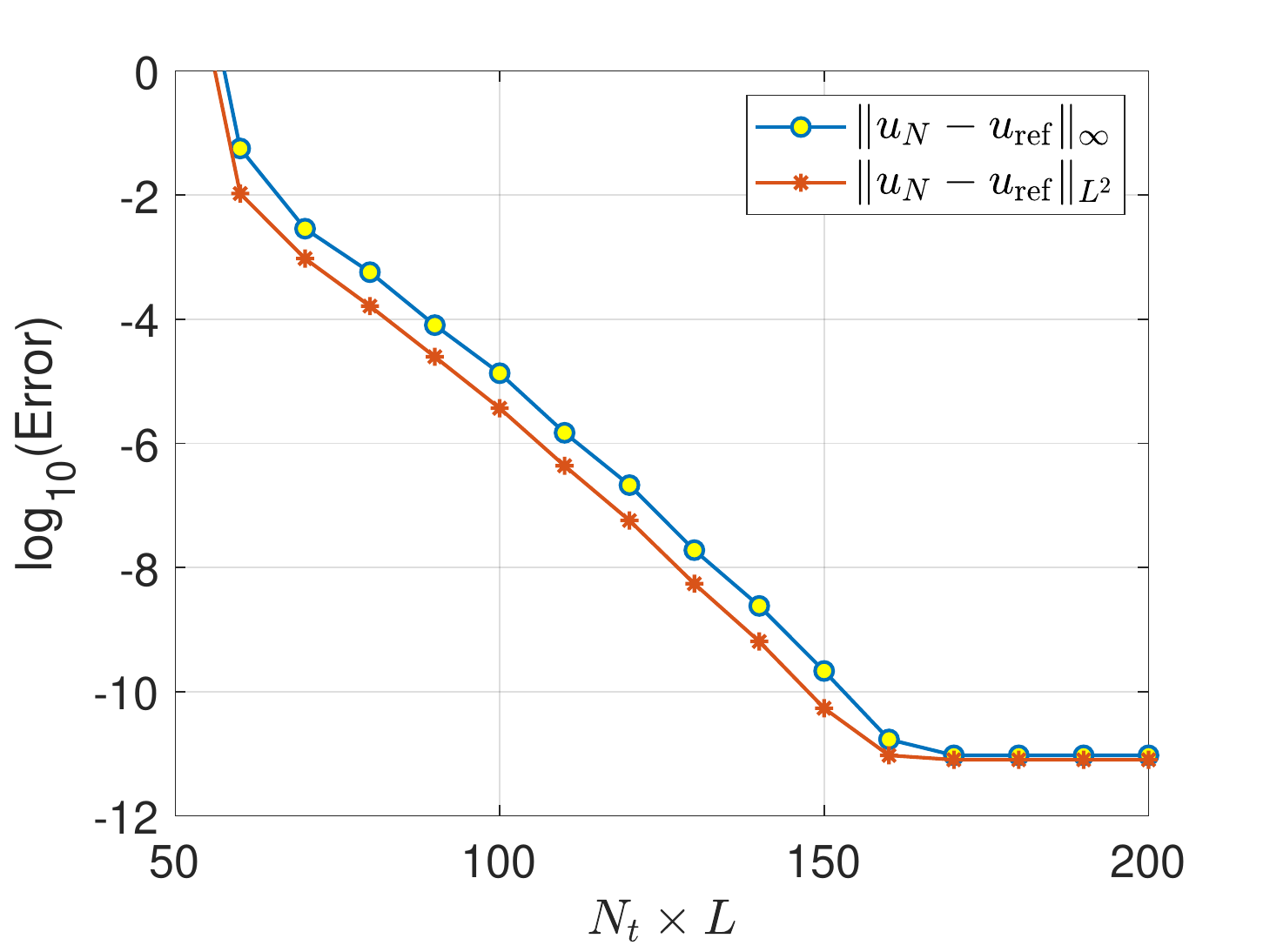}} \quad
  \subfigure[QZ  (single domain in $t$)]{\includegraphics[width=.32\textwidth]{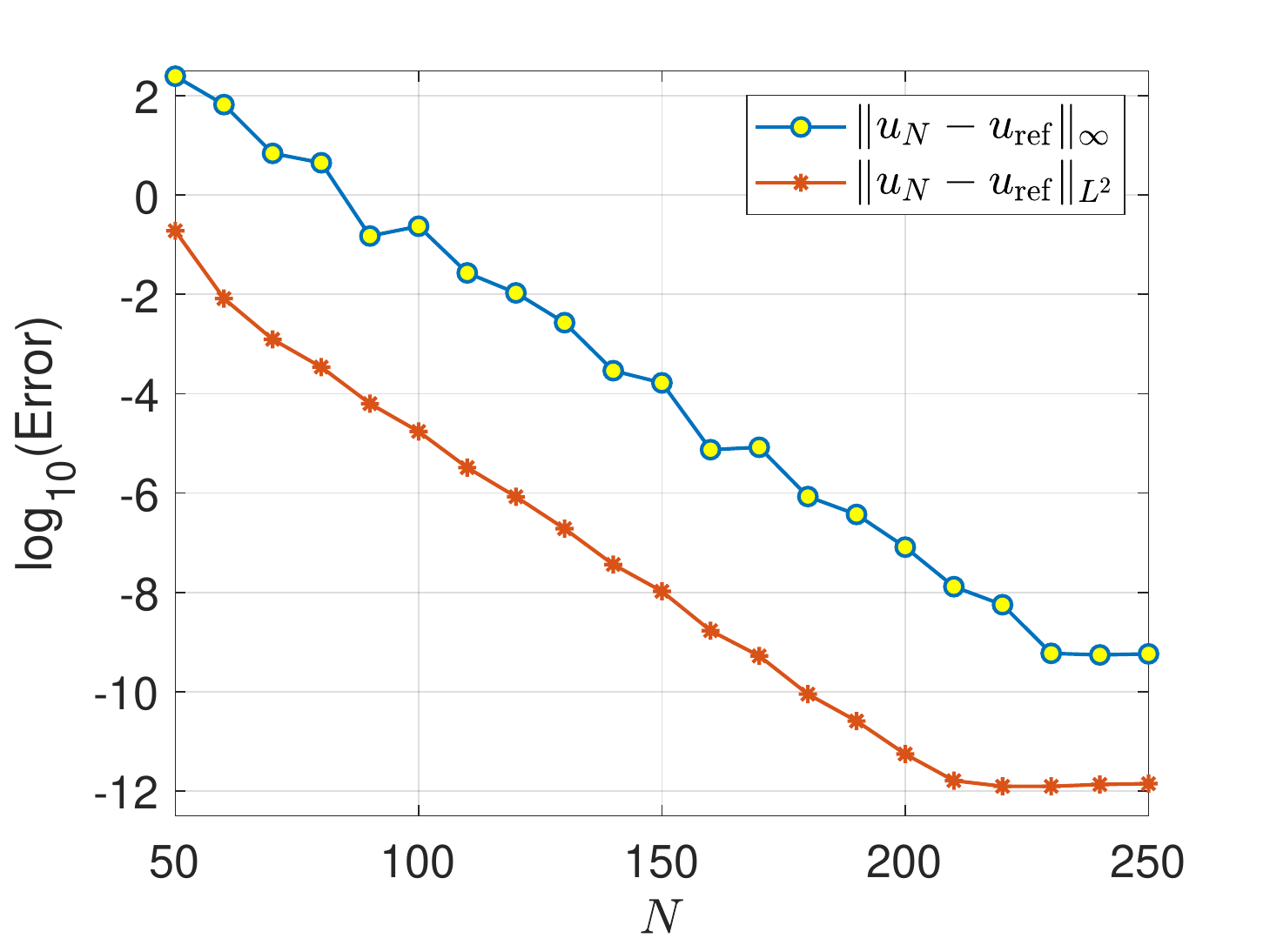}}
  \vspace*{-6pt}
  \caption{\small Convergence of space-time spectral methods and  conditioning of the eigenvector matrix (in (b)).\!
   (a) Diagonalisation  technique;  (c) QZ decomposition with a multi-domain implementation: $L=10$ subintervals of $t\in [0,40]$ with various
   $N_t\in [5,20]$ and $N_x=400.$  (d) Single domain space-time spectral method with various $N=N_t=N_x.$}
  \label{fig:oneway2}
\end{figure}

  We first check the accuracy of the solvers with the reference solution (denoted  by $u_{\rm ref}$) obtained from the scheme with sufficiently large $N_t = N_x=400$
  and up to $T=40$ in time.
As the diagonalisation technique works for small $N_t$, we partition the time interval $[0,T]$ equally into $L$ sub-intervals (but with the same  $N_t$ on each subinterval).   Now, we vary  $N_t$ from $5$ to $20$ and fix $L=10$ and $N_x=400.$  We plot in
Figure \ref{fig:oneway2} (a)  (resp.  Figure \ref{fig:oneway2} (c)) the discrete $L^\infty$- and $L^2$-errors for the
spectral solver with diagonalisation (resp. QZ decomposition)  in the semi-log scale. We observe from Figure \ref{fig:oneway2} (a) that
  the errors of the diagonalisation technique  for $N_t$ up to $15$ decay exponentially,   but grow exponentially  for $16\le N_t\le 20.$ Surprisingly, the growth rate agrees well with that of the condition number of the eigenvector matrix $\bs E,$ illustrated in Figure \ref{fig:oneway2} (b). However, the QZ decomposition in the same setting is stable and accurate for all  samples of $N_t,$
  as shown in Figure \ref{fig:oneway2} (c).  We also depict in Figure \ref{fig:oneway2} (d) the errors of the QZ decomposition with a single domain in $t$ and $N=N_t=N_x$ for various $N,$ from which we observe an exponential convergence rate.

We now use the space-time spectral solver with QZ decomposition in time  to simulate the wave propagations.
In the computation, we take $N_t=N_x=400.$  Observe from Figure \ref{fig:oneway3} (a) that the initial input \eqref{eq:u0-gauss} immediately disperses and spreads  in both space and time that leads to oscillations and the waves decay in space at certain
algebraic rate (refer to \cite{WKW22} for detailed analysis).

  \vspace*{-10pt}
\begin{figure}[!ht]
  \centering
  \subfigure[$u_N(x,t)$ with $\sigma=1$ ]{\includegraphics[width=.27\textwidth]{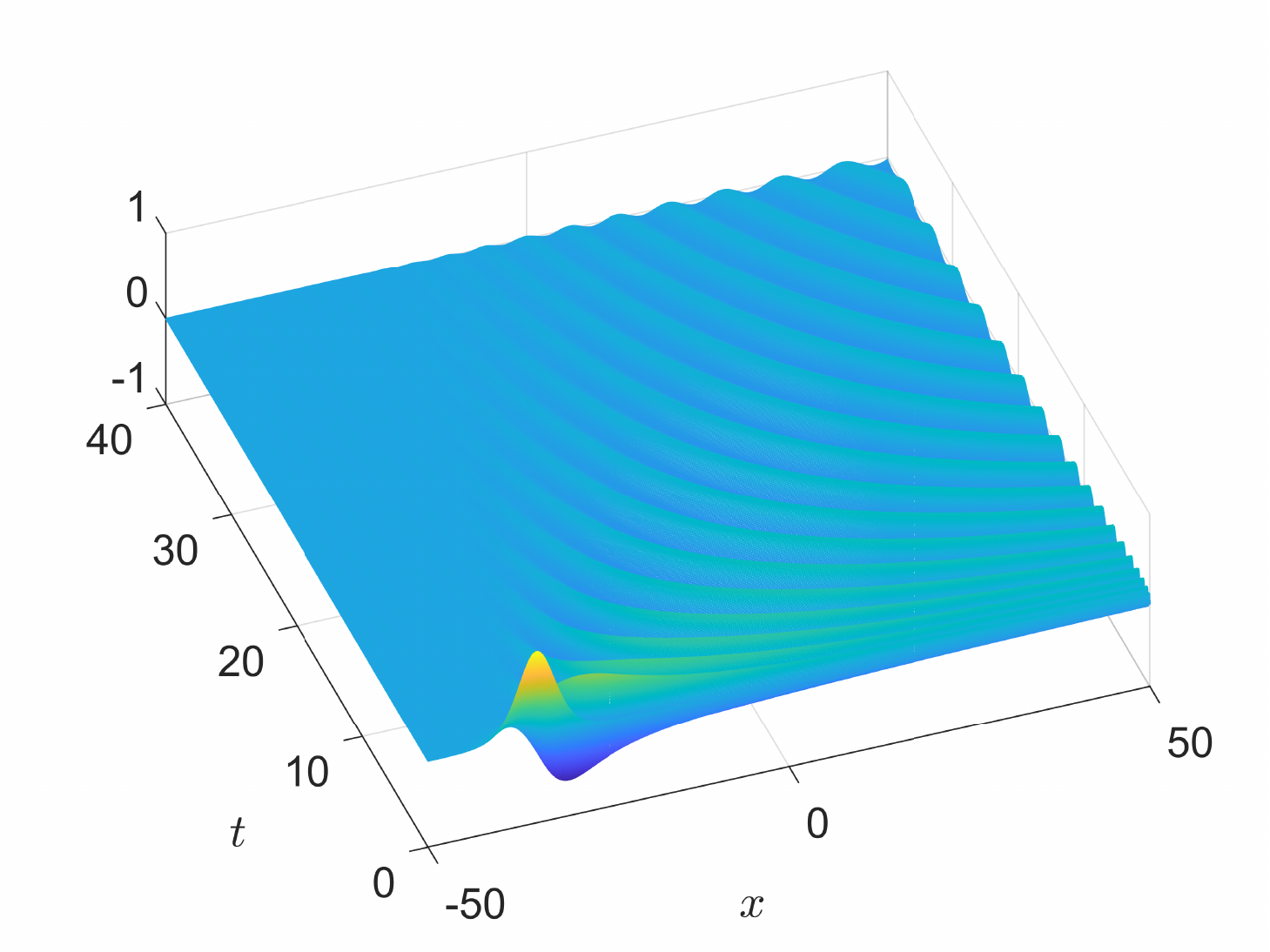}} \quad
  \subfigure[$u_N(x,40)$ with different $\sigma$]{\includegraphics[width=.27\textwidth]{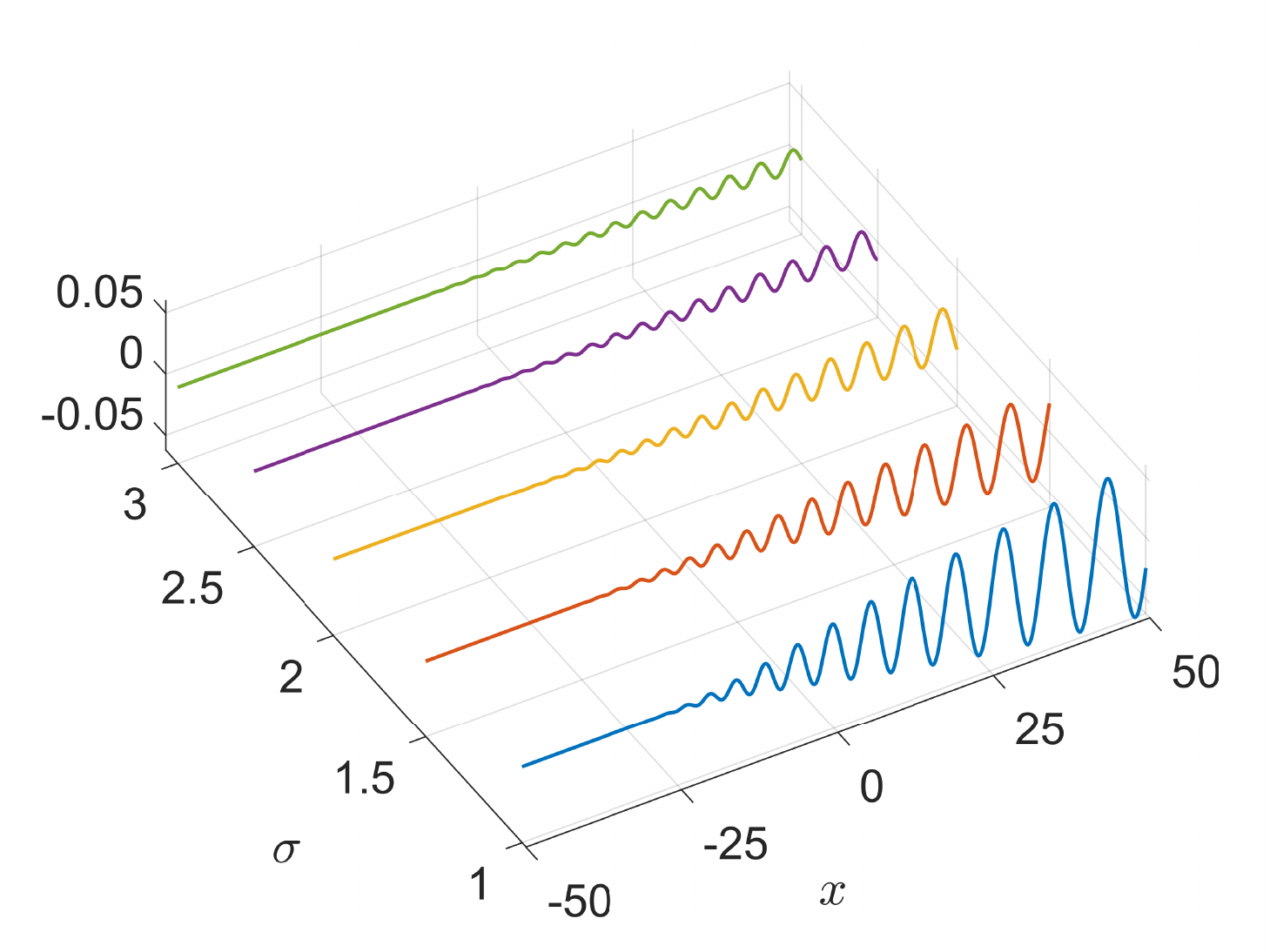}} \quad
  \subfigure[$u_N(50,t)$ with different $\sigma$]{\includegraphics[width=.27\textwidth]{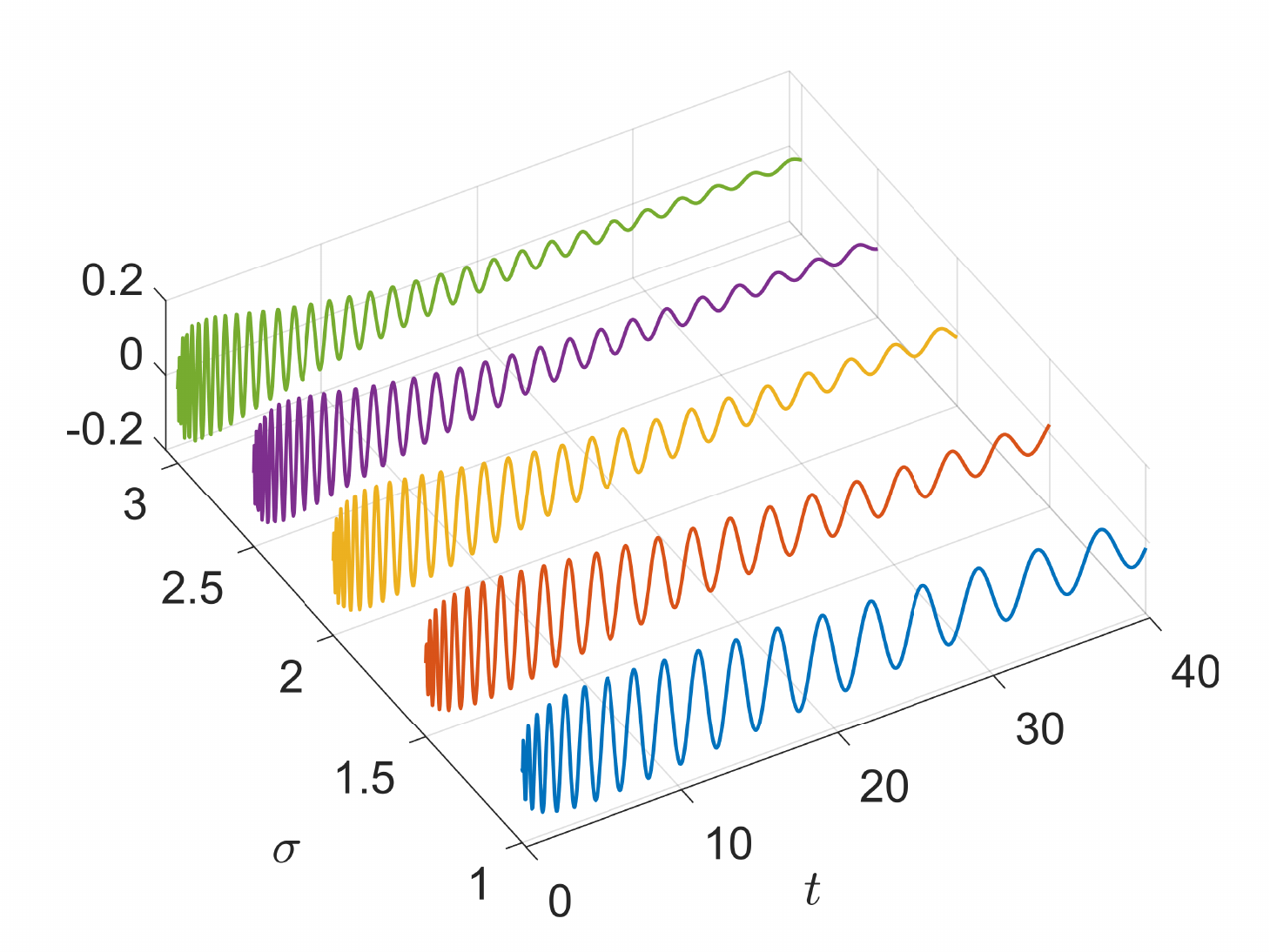}}
  \vspace*{-6pt}
  \caption{\small Numerical solutions $u_N(x,t)$  obtained by  \eqref{eq:oneway1} given the initial profile \eqref{eq:u0-gauss}
   with $N_x=N_t=400$ and different values of the parameter $\sigma.$
   (a)  Numerical solution with $\sigma=1$;  (b)-(c) Profiles of the numerical solution at fixed $t$ or $x$ but with different $\sigma.$}
  \label{fig:oneway3}
\end{figure}

It is important to point out that the coefficient matrices of linear systems  \eqref{eq:diag-algo} and \eqref{QZsolver} in space are well-conditioned.  In Table \ref{tab:oneway4}, we tabulate  the condition numbers and eigenvalues with the minimum and maximum moduli of
the matrix $\bs A:=\bs I_{\! N_x} + \bs M_{x}.$
  \begin{table}[!ht]
\centering
\caption{\small Conditioning and eigenvalues  of $\bs A=\bs I_{\! N_x} + \bs M_{x}.$}\label{tab:oneway4}\small
\vspace*{-6pt}
\begin{tabular}{|c|cccccc|}
\hline
$N_x$ & $20$ & $30$ & $40$ & $50$ & $100$ & $200$ \\
\hline
${\rm Cond} (\bs A)$ & 1.8730 & 1.8730 & 1.8730 & 1.8730 & 1.8730 & 1.8730 \\
$\min_j |\lambda_j^{\bs A}|$ & 1.0143 & 1.0071 & 1.0043 & 1.0029 & 1.0009 & 1.0003 \\
$\max_j |\lambda_j^{\bs A}|$ & 1.0708 & 1.0513 & 1.0539 & 1.0533 & 1.0513 & 1.0514  \\
\hline
\end{tabular}
\end{table}

\vspace*{-10pt}

\subsubsection{A nonlinear KdV-type equation}  As a  second example, we apply the space-time LDPG method to the KdV-type equation:
\begin{equation}\label{KdVmodelE}
\begin{dcases}
\partial_t U+\alpha U \partial_x U+\epsilon^2 \partial_{x}^3 U+\sigma \partial_x^{-1} U=0,\quad  & x\in \Omega:=(x_L, x_R),\;\; t \in (0,T]\\
U(x_L, t)=U(x_R, t)=\partial_x U(x_R, t)=0,\quad & t\in[0,T],\\
U(x,0)=U_0(x),\quad & x\in \bar \Omega,
\end{dcases}
\end{equation}
where $\alpha\ge 0, \epsilon>0$ and $\sigma\not=0$ are constants,  and
 \begin{equation*}\label{pinvU}
 \partial_x^{-1} U(x,t)=\frac 1 2 \Big(\int_{x_L}^x U(y,t) {\rm d} y- \int_{x}^{x_R} U(y,t) {\rm d} y\Big).
\end{equation*}
It also relates to the reduced KP equations \cite{WKW22}. Like before, we covert \eqref{KdVmodelE} to the reference square $(-1,1)^2,$
 and then follow Shen \cite{Shen2003SINUM} to formulate the LDPG in space involving the pair of dual spaces
\begin{equation*}\label{mVnVnstar}
\begin{split}
{\mathbb  V}_{\!N_x} &:= \big\{ \phi \in \mathbb P_{N_x+2}: \phi(-1) = \phi(1) = \phi'(1) = 0\big \}, \\
{\mathbb  V}_{\!N_x}^* &:= \big\{ \psi \in \mathbb P_{N_x+2}: \psi(-1) = \psi(1) = \psi'(-1) = 0\big\}.
\end{split}
\end{equation*}
We omit the details on the expressions of the basis functions and formulation of the matrix form. In the numerical tests, we use the Newton's iteration to solve the resulting nonlinear system when $\alpha\not=0.$
Consider \eqref{KdVmodelE} with the initial condition  given by
\begin{equation}\label{eq:u0-sech}
  U_0(x) = {\rm sech}^2 \big( \sqrt{3}(x+5)/6 \big),\quad x\in \Omega=(-50, 50),
\end{equation}
 taken from the soliton solution of the KdV equation (i.e., \eqref{KdVmodelE} with $\alpha=\epsilon=1$ and $\sigma=0$):
\begin{equation*}\label{KdVsolu}
U(x, t) = {\rm sech}^2 \big( \sqrt{3} (x- t/3 + 5)/6 \big).
\end{equation*}
We  first show the accuracy by solving the KdV equation
in the domain $\Omega\times (0,20).$
In Figure \ref{fig:kdv-sol00}, we plot the numerical solution and the discrete $L^\infty$- and $L^2$-errors with $N_t=80$ and various $N_x=50:10:200,$ which shows a spectral accuracy.
\begin{figure}[!h]
  \centering
  \includegraphics[width=.32\textwidth]{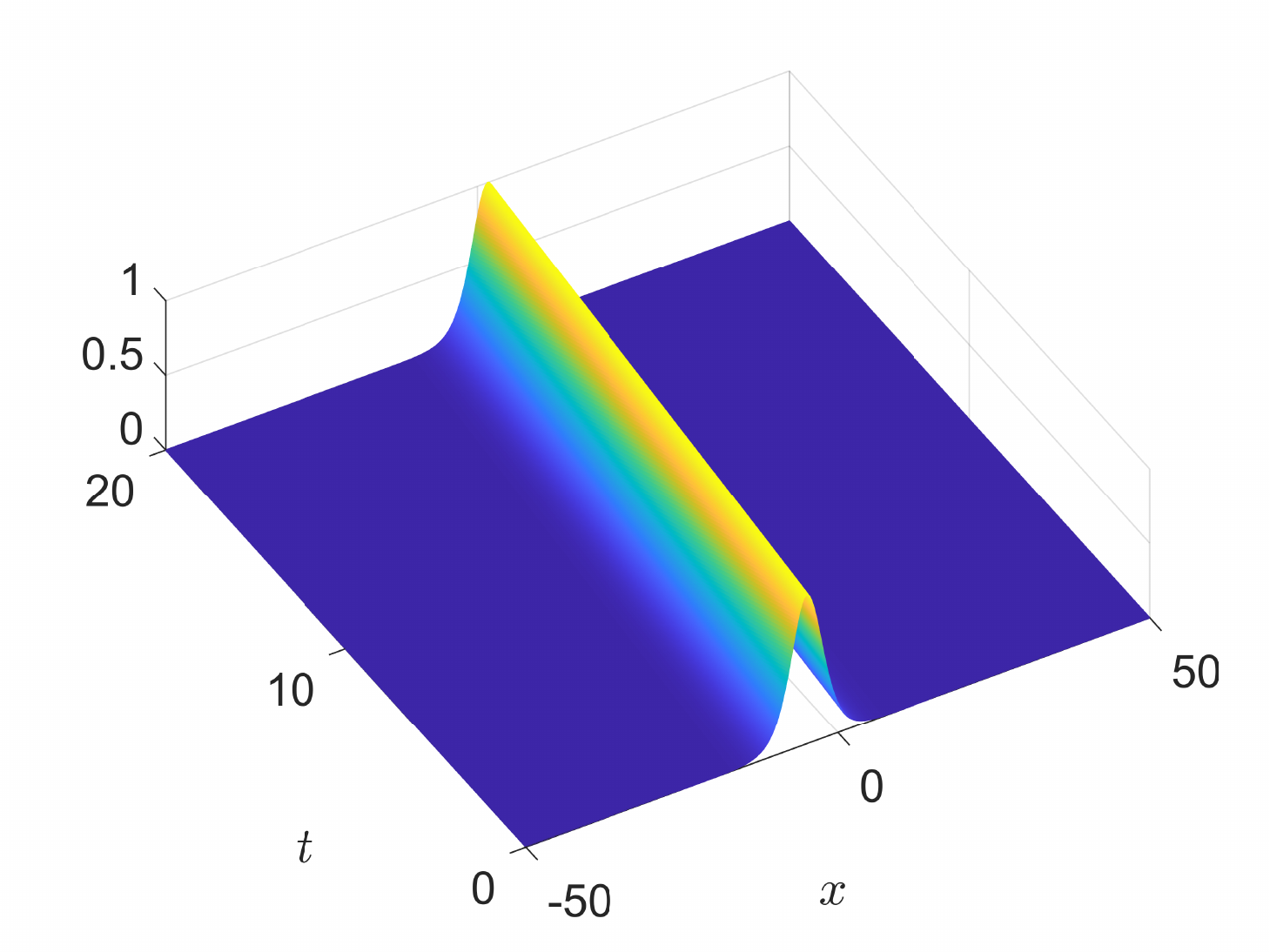} \quad \includegraphics[width=.32\textwidth]{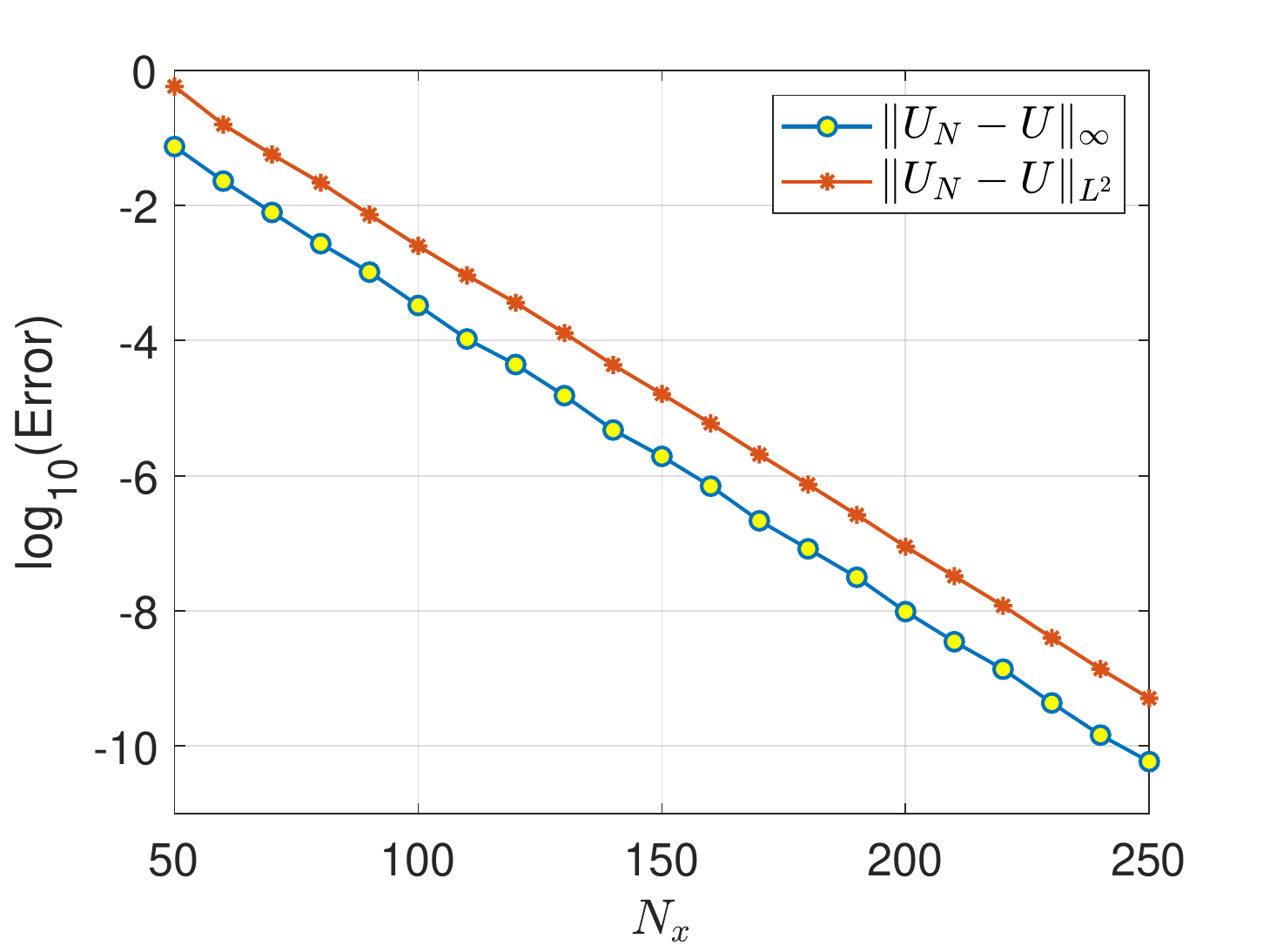}
  \caption{\small Left: numerical solution $u_N(x,t)$ (with $N_x=250, N_t=80$) of the KdV equation (i.e.,  \eqref{KdVmodelE} with $\alpha=\epsilon=1$ and $\sigma=0$) for given initial input in \eqref{eq:u0-sech}. Right:
 Errors against $N_x=50:10:250$ and fixed  $N_t=80$ in semi-log scale.}
  \label{fig:kdv-sol00}
\end{figure}

\begin{figure}[!ht]
  \centering
  \subfigure[Linear KdV]{\includegraphics[width=.32\textwidth]{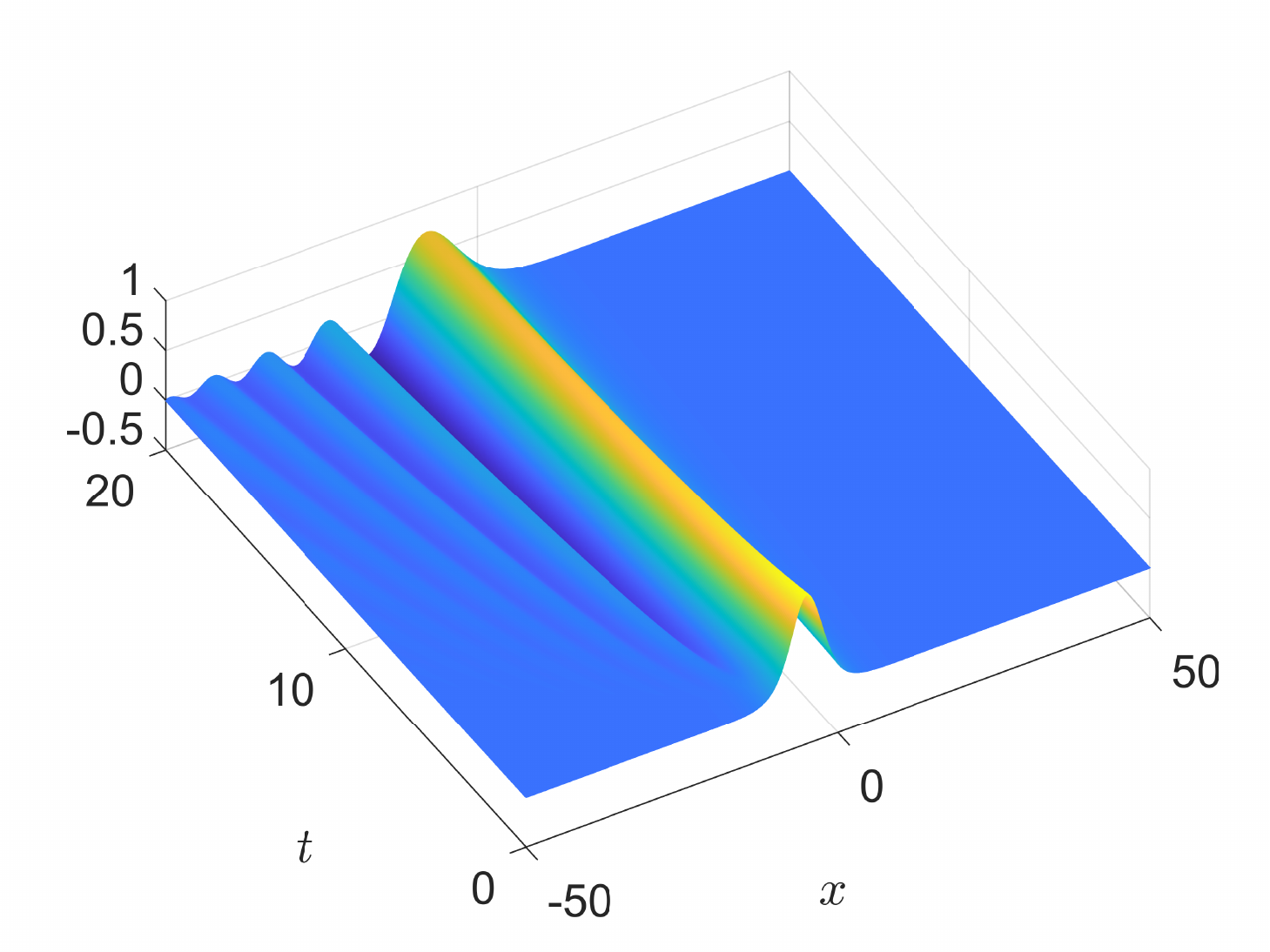}} \quad
  \subfigure[KdV with a nonlocal term]{\includegraphics[width=.32\textwidth]{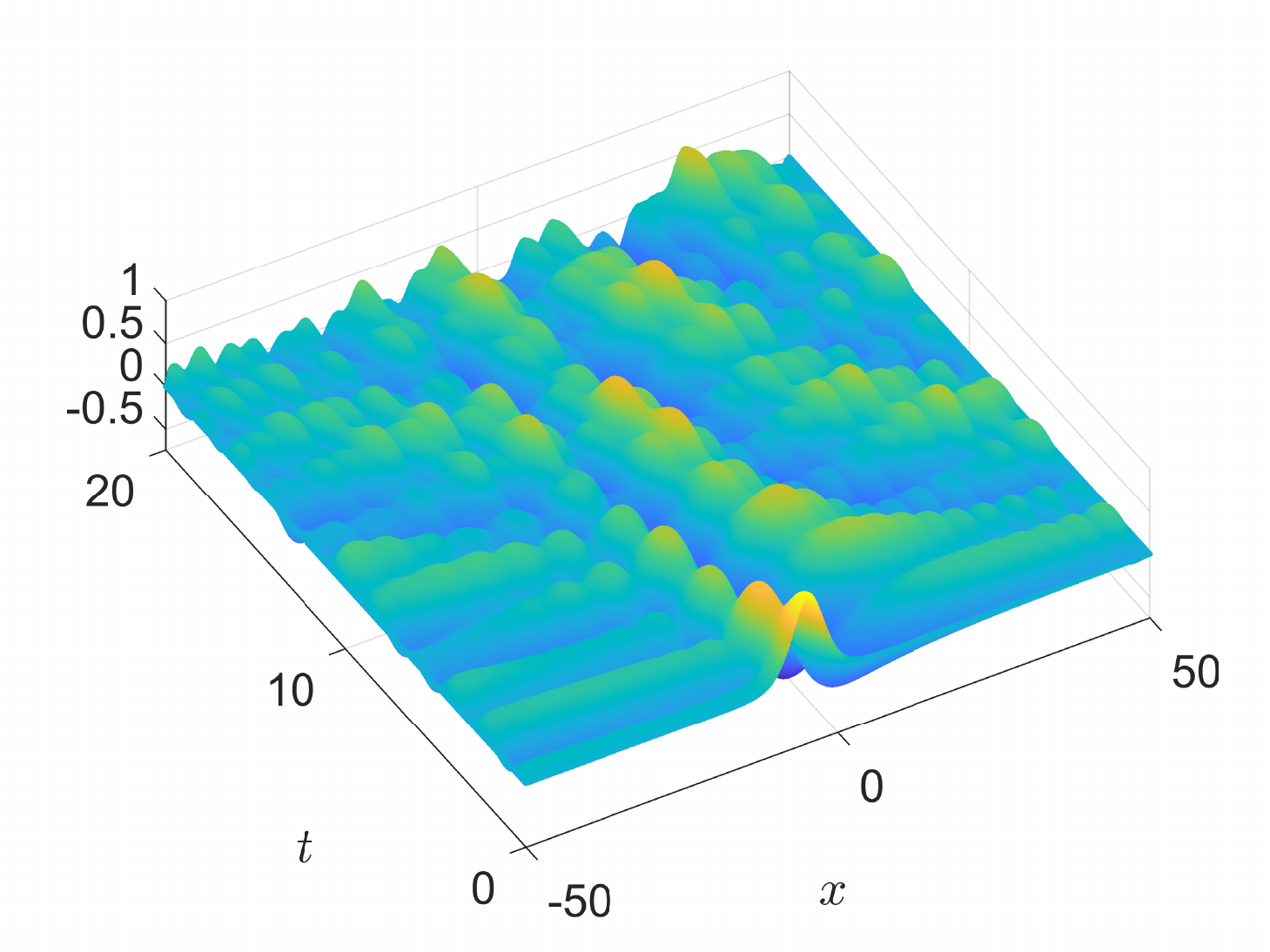}}
  \vspace*{-6pt}
  \caption{\small  Numerical solutions with $N_x=250, N_t=80$ and the initial input $U_0(x)$ given in \eqref{eq:u0-sech}.  (a) Linear KdV equation
  (i.e.,  \eqref{KdVmodelE} with  $\epsilon=1$ and $\alpha=\sigma=0$). (b) KdV-type equation with a nonlocal term
  (i.e.,  \eqref{KdVmodelE} with   $\alpha=0,\epsilon=1$ and $\sigma=0.1$).}
  \label{fig:kdv-sol-sig100}
\end{figure}

Using the accurate space-time spectral solver, we simulate the wave propagations for the linear KdV equation and \eqref{KdVmodelE} with a nonlocal term under the same setting.  We plot the numerical solutions in Figure \ref{fig:kdv-sol-sig100} and find that for the linear KdV equation, the initial wave spreads out only on the left side of the soliton wave of the nonlinear KdV equation (see Figure \ref{fig:kdv-sol00} (left))  and induces some  wiggles.  However, when there is a nonlocal integral term, the initial wave immediately propagates chaotically without a clear pattern.

\section{Concluding remarks}
In this paper, we showed that the eigen-pairs of spectral  discretisation matrices resulted from the LDPG methods for prototype $m$th-order IVPs are associated with the GBPs. More precisely, we were able to exactly characterize the eigenvalues and eigenvectors for $m=1,2$, and approximately characterize the eigenvalues and eigenvectors for $m=3$. We also showed that  by reformulating of the $m$th-order ($m>3$)  IVP as a first-order system, the eigenvalues and eigenvectors of the matrix  can be exactly characterized by   zeros of the GBP: $B_N^{(3)}(z).$

As a by-product, we identified the eigen-pairs of the spectral-collocation differentiation matrix at the Legendre points by reformulating the collocation scheme into a Petrov-Galerkin formulation, and provided answers to some open questions in literature related to this special collocation method.

The findings in this paper have direct implication on developing LDPG spectral methods in time.  As an application, we proposed a general framework to construct space-time spectral methods for a class of time dependent PDEs, and presented two alternatives, matrix diagonalisation which is fully parallel but is limited to a small number of unknowns in time  due to the ill conditioning and QZ decomposition which is stable at large size but involves sequential computations, for their efficient implementation.


\end{document}